\documentclass[10pt,reqno,oneside,dvipsnames]{amsart}

\usepackage{amsmath,amsthm, amssymb,eufrak,yfonts}
\usepackage{geometry} 
\geometry{a4paper}   
\usepackage[english]{babel} 
\usepackage{nicefrac} 
\usepackage{hyperref}
\usepackage{graphicx}
\usepackage{color}
\usepackage[usenames,dvipsnames]{xcolor}
\usepackage{dsfont}
\usepackage[sc]{mathpazo}
\usepackage{subcaption}
\usepackage{pgfplots}
\numberwithin{equation}{section}

\newtheorem{proposition}{Proposition}[section]
\newtheorem{theorem}[proposition]{Theorem}

\newtheorem{lemma}[proposition]{Lemma}

\theoremstyle{definition}
\newtheorem{dfz}[proposition]{Definition}
\newtheorem{rmk}[proposition]{Remark}

\newcommand{\beq}{\begin{equation}}
\newcommand{\eeq}{\end{equation}}
\newcommand{\ben}{\begin{enumerate}}
\newcommand{\een}{\end{enumerate}}
\newcommand{\bit}{\begin{itemize}}
\newcommand{\eit}{\end{itemize}}

\title[Optimization of the survival threshold for logistic equations]{Optimization of the survival threshold for anisotropic logistic equations with mixed boundary conditions}
\date{}
\author{Serena Benigno}
\address[Serena Benigno]{Università degli studi della Campania Luigi Vanvitelli}
\email{serena.benigno@unicampania.it}

\DeclareMathOperator{\tr}{Tr}
\begin{document}
\numberwithin{equation}{section}
\begin{abstract}
In this paper we study a reaction diffusion 
problem with anisotropic diffusion  and
mixed  Dirichlet-Neumann boundary conditions on 
$\partial \Omega$ for $\Omega\subset \mathbb{R}^N$, $N\geq 1$. 
First, we prove that the parabolic problem has a unique positive, bounded solution.
Then, we show that this solution converges as $t\to +\infty$ to the unique nonnegative solution of the elliptic associated problem. The existence of 
the unique positive solution to this problem depends on a
 principal eigenvalue of a  suitable  linearized problem with a sign-changing weights. Next, we 
study the  minimization of such eigenvalue with respect to 
the sign-changing weight, showing that there exists an optimal
bang-bang weight, namely a piece-wise constant weight that takes only two values. Finally, we completely solve the 
problem in dimension one. 
\end{abstract}
\maketitle
\noindent
{\footnotesize \textbf{AMS-Subject Classification}}. 
{\footnotesize 35J57, 35K61, 49K20}\\
{\footnotesize \textbf{Keywords}}. 
{\footnotesize Anisotropic operators, principal eigenvalue, mixed boundary conditions.}

\newcommand{\Bcal}{{\mathcal{B}}}
\newcommand{\Ccal}{{\mathcal{C}}}
\newcommand{\Dcal}{{\mathcal{D}}}
\newcommand{\Ecal}{{\mathcal{E}}}
\newcommand{\Fcal}{{\mathcal{F}}}
\newcommand{\Gcal}{{\mathcal{G}}}
\newcommand{\Hcal}{{\mathcal{H}}}
\newcommand{\Lcal}{{\mathcal{L}}}
\newcommand{\Mcal}{{\mathcal{M}}}
\newcommand{\Ncal}{{\mathcal{N}}}
\newcommand{\Pcal}{{\mathcal{P}}}
\newcommand{\Ocal}{{\mathcal{O}}}
\newcommand{\Qcal}{{\mathcal{Q}}}
\newcommand{\Rcal}{{\mathcal{R}}}
\newcommand{\Scal}{{\mathcal{S}}}
\newcommand{\Tcal}{{\mathcal{T}}}
\newcommand{\Ucal}{{\mathcal{U}}}
\newcommand{\Zcal}{{\mathcal{Z}}}
\newcommand{\Acal}{{\mathcal{A}}}
\newcommand{\Kcal}{{\mathcal{K}}}
\newcommand{\Jcal}{{\mathcal{J}}}
\newcommand{\Ical}{{\mathcal{I}}}
\section{Introduction}
This paper is devoted to
the study of the following reaction diffusion problem 
\begin{equation}\label{problema parabolico}
\begin{cases}
\partial_t v -d \ {\rm div}(\Hcal(\nabla v)
\nabla_\xi \Hcal(\nabla v))= f(x,v) &\text{in 
} \Omega\times (0,T), 
\\
\hskip127pt v=0  &\text{on }   
\Gamma_\Dcal\times (0,T), \\
\hskip42pt \Hcal(\nabla v)\nabla_\xi 
\Hcal(\nabla v)\cdot n=0   &\text{on}  \ 
\Gamma_\Ncal \times (0,T),\\
\hskip103pt v(x,0)=v_0(x)\  &\text{in }  
\Omega,
\end{cases}
\end{equation}
 settled in a bounded domain 
$\Omega\subset \mathbb{R}^N$ of class $C^2$
($N\geq 1$).
For $N\geq 2$, $\Gamma_\Dcal$, $
\Gamma_{\Ncal}$
are smooth $(N-1)$-dimensional submanifolds 
of $\partial\Omega$ such that $|\Gamma_\Dcal|
>0$, $\Gamma_\Dcal \cap 
\Gamma_\Ncal=\emptyset$, $
\overline{\Gamma_\Dcal} \cup 
\overline{\Gamma_\Ncal}=\partial\Omega$, $
\overline{\Gamma_\Dcal} \cap 
\overline{\Gamma_\Ncal}=\Gamma$ is a smooth $
(N-2)$-dimensional submanifold, and $n$ is 
the outer normal on $\partial\Omega$. 
For $N=1$, $\Gamma_\Dcal$ and $\Gamma_\Ncal$
are just the extrema of the interval $\Omega$.
\\
This class of problems usually arises in the description of 
the  dispersal of a population in a heterogeneous 
environment,
with mixed boundary conditions.
The function $v(x,t)$ represents the
population density in position $x$ at time
$t$, $d$ is the diffusion coefficient (the
motility of the population).
The initial datum $v_0$ is positive 
and 
it lies in
$L^\infty(\Omega)\cap H_\Dcal^1(\Omega)$,
where $H_\Dcal^1(\Omega)$ is the subspace of
$H^1(\Omega)$ containing the functions with
zero trace on $\Gamma_\Dcal$ (see Definition
\eqref{spazio}).
The diffusion operator is induced by an anisotropic norm, given by
 the function $\Hcal:
\mathbb{R}^N \rightarrow \mathbb{R}$, 
belonging to $C^2(\mathbb{R}^N \setminus 
\left\{ 0 \right\})$, and such that it satisfies the following 
properties
\begin{equation}\label{H positiva}
\Hcal(\xi)\geq 0 \ \text{and} \ \Hcal(\xi)=0 
\ \text{if and only if} \ \xi=0;
\end{equation}
\begin{equation}\label{H pos omogenea}
\Hcal(t \xi)=t \Hcal(\xi) \ \text{for all} \ 
t\geq 0, \ \text{for all }\xi \in \mathbb{R}^N;
\end{equation}
\begin{equation}\label{convessità}
\left\{ \xi \ : \ \Hcal(\xi)<1 \right\} \ \text{is uniformly convex};
\end{equation}
where assumption \eqref{convessità}
means that the mean curvatures of the sub-level  
set $\left\{ \xi \ : \ \Hcal(\xi)<1 \right\} $ 
are positive and bounded away from zero.\\
If $\Hcal(\xi)=|\xi|$ and $f(x,s)= 
m(x)s-s^2$, equation in 
\eqref{problema parabolico}
becomes a largely studied reaction diffusion
equation introduced independentely by
Fisher \cite{13} and 
Kolmogorov, Petrovskii and 
Piskunov \cite{17}.
In this classical context, 
under homogeneous Dirichlet boundary conditions,
Skellam \cite{23} proved that, in the case of 
$m$ constant and $N=2$,
for every $v_0\geq 0$
there exists $v$ globally defined and
$v(x,t)\to u(x)$ as $t\to +\infty$, where $u$ is 
the unique positive solution to
\begin{equation*}
\begin{cases}
-\Delta u= m(x)u-u^2 \ &\text{in } \Omega,\\
\hskip6pt
\hskip10pt u=0 &\text{on } \partial\Omega.
\end{cases}
\end{equation*}
In their seminal papers \cite{8,10}, Cantrell and Cosner
proved the same result for
$m$ nonconstant and for every dimension. 
Here, we assume that the nonlinearity
$f(x,s):\Omega 
\times [0,+\infty)\rightarrow \mathbb{R}$
is a Carathéodory function satisfying the 
following conditions
\begin{equation}\label{f i}
f(x,0)=0;
\end{equation}
\begin{equation}\label{f ii} 
\exists \ M\geq 0 \ : f(x,s)<0,
\ \forall \, s\geq M, \ \text{a.e. in }
\Omega;
\end{equation}
\begin{equation}\label{f iii}  \frac{f(x,s)}
{s} \ \text{is strictly decreasing for} \ 
s>0.
\end{equation}
Moreover, we suppose that there exists $\partial_s f(x,s)$, partial derivative of
$f$ with respect to $s$,   for almost every $x\in \Omega$. In addition, $\partial_s f(x,s)$ satisfies the  following growth condition. 
\begin{equation} \label{f iv} 
|\partial_s f(x,s)|\leq L \ \text{for 
a.e. } x\in \Omega \ \text{and for all } |s|\leq A, \, A>0.
\end{equation}
Since we will be focused on positive 
solutions, we consider $f \equiv 0$ on
$\Omega\times[-\infty,0)$.
Berestycki, Hamel and Roques proved in 
\cite{Berestychi} that when homogenenous
Dirichlet boundary conditions are imposed,
there exists a unique $v$ solution of the parabolic problem, $v$ is globally
defined and $v(x,t)\to u(x)$ as $t \to \infty$ if and only 
if $\mu_1(d,m)<0$, where $\mu_1(d,m)$ is given 
by
\begin{equation*}
\mu_1(d,m)=\inf \left\{ \frac{d\int_\Omega 
|\nabla u|^2-\int_\Omega mu^2}
{\int_\Omega u^2} \ : \ u \in H_	
0^1(\Omega) \ , \ u\geq 0 \ , u 
\not\equiv 
0  \right\},
\end{equation*}
and $m$ is the right derivative of $f$ in $0$ 
with respect to $s$, namely
\begin{equation}\label{limite f}
\lim_{s\to0^+}\dfrac{f(x,s)}s:=m(x) \in 
L^\infty(\Omega).
\end{equation}
Denoting with $d^*=\sup\left\{ d>0 \ : \ 
\mu_1(d,m)<0 \right\}$, one can show that $d^*
\in (0,+\infty)$ and the number $\lambda_1(m)=
1/d^*$ is a principal positive
eigenvalue (i.e. a positive
eigenvalue with associated positive eigenfunction) of 
\begin{equation*}
\begin{cases}
-\Delta u = \lambda_1(m) m u \ &\text{in } \Omega,\\
\hskip15pt u>0 &\text{in } \Omega,\\
\hskip15pt u=0 &\text{on } \partial\Omega.
\end{cases}
\end{equation*}
Cantrell and Cosner
\cite{8,10} showed that $\lambda_1(m)$ is 
achieved
for $m$ varying in 
the class $\Mcal$ defined as
\begin{equation}\label{classe M}
\Mcal:=\left\{ m \in L^\infty(\Omega) \ :  \ 
m^+\not\equiv 0 \ , \ -\beta \leq m \leq 1 \ 
, \ \int_\Omega m \leq m_0|\Omega| 
\right\},
\end{equation}
with $\beta \in \mathbb{R}^+$
and $m_{0}\in \mathbb{R}$
such that $m_0\in (-1,\beta)$. Moreover, 
their results are the first contributions in 
the maximization of $d^*$ with respect to $m$,
that is equivalent to solving the 
minimization problem
\begin{equation}\label{prob lap}
\Lambda_1=
\inf\left\{ \lambda_1(m) \ : \ m \in \Mcal 
\right\}.
\end{equation}
This optimization problem has been studied also
for Neumann or even 
Robin boundary conditions (see for instance
\cite{HKL,18,21}). In all these papers it is
shown that $\Lambda_1$ is achieved by a 
bang-bang
weight, namely $m=\chi_E-\beta \chi_{E^c}$, 
where $E$ is a superlevel set of the associated
positive
eigenfunction. Moreover, in dimension 1 it is
known that $E$ is an interval centered at the
point of maximal distance from the boundary
when homogeneous Dirichlet boundary conditions
are imposed (see e.g. \cite{8,10}), while
it is attached to the boundary
under homogeneous Neumann boundary conditions
(see e.g. \cite{21}). The case of Robin boundary
conditions is somewhat intermediate and it is
analyzed in \cite{HKL,21}.
The optimization of $\lambda_1(m)$ has been 
recently studied under Robin boundary conditions
for a function 
$\Hcal \in C^2(\mathbb{R}^N \setminus
\left\{ 0 \right\})$ satisfying \eqref{H positiva}-\eqref{convessità} in \cite{Pepisc},
where it is shown that the anisotropic 
counterpart of $\lambda_1(m)$ is still reached 
by
a bang-bang weight associated with an optimal 
set
that, in dimension 1, is again an interval 
whose position, in the case of homogeneous 
Dirichlet and Neumann boundary conditions is determined by $\Hcal$,
differently from the Laplacian
case.\\
As relevant is the case of mixed boundary
conditions, where a part of the buondary of
$\Omega$ with positive measure ($\Gamma_\Dcal$)
is supposed to be completely hostile, while $
\Gamma_\Ncal$ represents the part where there is
no flux.\\
Up to our knowledge, this context has not been
studied yet, even for the Lapalacian operator and we
address it here in the presence of an
anisotropic diffusion, aiming at detecting a
possible relationship between the boundary
conditions and the diffusion. With this goal
in mind, we first prove the existence of a 
survival threshold also in this context.
Our first result is concerned with the elliptic
problem associated with \eqref{problema 
parabolico}.
\begin{theorem}\label{esistenza sotto d star}
Let
$\Hcal \in C^2(\mathbb{R}^N\setminus 
\left\{ 0 \right\})$ satisfy
\eqref{H positiva}-\eqref{convessità}, and
let $f$ satisfy 
assumptions \eqref{f i}-\eqref{f iv}.
If $m$ defined in \eqref{limite f} is such that
$m^+\not\equiv 0$,
then there exists $d^*=d^*(m)>0$ such that
\begin{itemize}
\item[(i)] if $0<d<d^*$,
then there
exists a unique, positive, bounded weak 
solution 
to
problem 
\begin{equation}\label{problema non lineare}
\begin{cases}
-d \ \text{div}(\Hcal(\nabla u)\nabla_\xi \Hcal 
(\nabla u))= f(x,u) \ &\text{in} \ \Omega, \\
\hskip110pt u>0 &\text{in } \Omega,\\
\hskip110pt u=0 &\text{on } \Gamma_\Dcal,\\ 
\hskip34pt \Hcal(\nabla u)\Hcal_\xi(\nabla u)
\cdot n=0  &\text{on } \Gamma_\Ncal.
\end{cases}
\end{equation}
Moreover, $u \in C^{0,\gamma}
\left(\overline{\Omega}\right)$, for some $
\gamma \in (0,1/2)$;
\item[(ii)] if $d\geq d^*$, then the only
nonnegative weak solution to Problem
\eqref{problema non lineare} is the trivial 
one.
\end{itemize}
\end{theorem}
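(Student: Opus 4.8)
The plan is to introduce the principal eigenvalue of the linearized problem and to relate its sign to the existence of a positive solution via sub- and supersolutions. Concretely, for $d>0$ define
\[
\mu_1(d,m)=\inf\left\{\frac{d\int_\Omega \Hcal(\nabla u)^2-\int_\Omega m\,u^2}{\int_\Omega u^2}\ :\ u\in H^1_\Dcal(\Omega),\ u\not\equiv 0\right\}.
\]
Using \eqref{H positiva}--\eqref{convessità} (in particular $\Hcal(\xi)^2$ is comparable to $|\xi|^2$, so the quotient is well defined and coercive on $H^1_\Dcal(\Omega)$, and $|\Gamma_\Dcal|>0$ gives a Poincaré inequality), one shows the infimum is attained by a function $\vfi_1>0$ in $\Omega$, which is a weak solution of the linearized eigenvalue problem with mixed boundary conditions. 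The key monotonicity fact is that $d\mapsto \mu_1(d,m)$ is continuous and strictly increasing; moreover $\mu_1(d,m)\to -\int_\Omega m\,u^2/\int_\Omega u^2<0$ for a suitable test function as $d\to 0^+$ (here $m^+\not\equiv 0$ is used to make the numerator negative for an appropriate $u$), while $\mu_1(d,m)\to+\infty$ as $d\to+\infty$ by coercivity. Hence there is a unique $d^*=d^*(m)\in(0,+\infty)$ with $\mu_1(d,m)<0$ iff $d<d^*$, and $\mu_1(d^*,m)=0$.

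For part (i), fix $d<d^*$, so $\mu_1(d,m)<0$. I would build a positive subsolution of \eqref{problema non lineare} as $\underline u=\eps\vfi_1$ for small $\eps>0$: since $f(x,s)/s\to m(x)$ as $s\to 0^+$ and $\mu_1<0$, for $\eps$ small one has $-d\,\mathrm{div}(\Hcal(\nabla\underline u)\nabla_\xi\Hcal(\nabla\underline u))=-d\,\eps\,\mathrm{div}(\Hcal(\nabla\vfi_1)\nabla_\xi\Hcal(\nabla\vfi_1))=\eps(\mu_1+m)\vfi_1\le f(x,\eps\vfi_1)$ (using the $1$-homogeneity \eqref{H pos omogenea} of $\Hcal$ so that the operator is $1$-homogeneous, together with \eqref{f iii}). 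A supersolution is the constant $\overline u=M$ from \eqref{f ii}, since $f(x,M)<0$ and the operator annihilates constants and $M\ge0$ satisfies $u=0$ on $\Gamma_\Dcal$ only in the sub/supersolution sense — here one works with $\overline u$ a large constant so $\overline u\ge\underline u$ after shrinking $\eps$. Then a monotone iteration (Minty--Browder / variational argument applied to the truncated functional, using the convexity from \eqref{convessità} for weak lower semicontinuity and \eqref{f iv} for the Lipschitz control needed to pass to the limit) produces a weak solution $u$ with $\underline u\le u\le\overline u$, hence $0<u\le M$, giving positivity and boundedness. Uniqueness follows from the strict monotonicity \eqref{f iii}: if $u_1,u_2$ are two positive solutions, testing the difference of equations with $(u_1^2-u_2^2)/u_i$-type test functions (the Díaz--Saá / Picone-type inequality adapted to the anisotropic operator, which holds because $\xi\mapsto\Hcal(\xi)^2$ is convex) forces $u_1\equiv u_2$. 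Finally, the Hölder regularity $u\in C^{0,\gamma}(\overline\Omega)$ for some $\gamma\in(0,1/2)$ near the Dirichlet--Neumann junction $\Gamma$ is obtained from known boundary regularity results for quasilinear operators of $\Hcal$-type with mixed boundary data, the $1/2$ threshold being the generic one caused by the corner where $\overline{\Gamma_\Dcal}$ meets $\overline{\Gamma_\Ncal}$.

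For part (ii), suppose $d\ge d^*$ and let $u\ge0$ be a weak solution, $u\not\equiv0$. Testing the equation with $u$ and using $f(x,s)\le m(x)s$ (a consequence of \eqref{f i} and \eqref{f iii}, since $f(x,s)/s<\lim_{t\to0^+}f(x,t)/t=m(x)$ for $s>0$) gives $d\int_\Omega\Hcal(\nabla u)^2=\int_\Omega f(x,u)u\le\int_\Omega m\,u^2$, i.e. $\mu_1(d,m)\le 0$; if $d>d^*$ this already contradicts $\mu_1(d,m)>0$. In the borderline case $d=d^*$ one has $\mu_1(d^*,m)=0$, and equality in the Rayleigh quotient forces $u$ to be a multiple of $\vfi_1$ and the inequality $f(x,u)u\le m u^2$ to be an equality a.e.; but $f(x,s)/s$ is \emph{strictly} decreasing, so $f(x,u)u=m u^2$ only where $u=0$, contradicting $u>0$. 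Hence $u\equiv0$. The main obstacle I anticipate is not the variational scheme but the handling of the anisotropic quasilinear operator near the corner $\Gamma$: establishing that the sub/supersolution method and the Picone/Díaz--Saá uniqueness argument go through with mixed boundary conditions (in particular justifying the boundary terms vanish for the chosen test functions in $H^1_\Dcal(\Omega)$) and pinning down the Hölder exponent $\gamma<1/2$; I would quote the anisotropic regularity theory used in \cite{Pepisc} and the mixed-boundary-value results it relies on, adapting them to the present setting.
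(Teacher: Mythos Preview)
Your approach is essentially the paper's: existence/nonexistence is decided by the sign of $\mu(d,m)$ (your $\mu_1$), proved via the same sub/supersolution pair $\eps\vfi_1$ and a large constant together with a Picone-type uniqueness argument, and the threshold is then identified as $d^*=1/\lambda(m)$, where $\lambda(m)$ is the principal eigenvalue of the weighted problem \eqref{problema anisotropo}---this coincides with the zero of your monotone map $d\mapsto\mu_1(d,m)$ but makes the link to the later optimization explicit. Two small points worth tightening: the paper restricts the Rayleigh quotient defining $\mu$ to $u\ge 0$ because $\Hcal$ is only \emph{positively} homogeneous (so one cannot replace a sign-changing minimizer by $|u|$ and still control $\int\Hcal^2(\nabla\cdot)$); and for (ii) the paper first shows any nonnegative weak solution satisfies $u\le M$ via a truncation, then uses the \emph{strict} inequality $f(x,u)<m(x)u$ for $u>0$ to obtain $0>\mu(d,m)\int u^2\ge 0$ directly, which handles $d=d^*$ and $d>d^*$ in one line and avoids your appeal to simplicity of the eigenfunction.
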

Let us we observe that Theorem \ref{esistenza sotto d 
star}
is original even in the case of $\Hcal(\xi)=
|\xi|$ with mixed boundary conditions.
Moreover, it holds with slight modifications also for homogeneous
Dirichlet and Neumann boundary conditions,
so that it generalizes the existence results contained in
\cite{Berestychi, 21, Pepisc} with respect  both  to the boundary conditions
and to  anisotropic diffusion.

We observe that, as stated in Theorem 
\ref{esistenza sotto d star}, the nontrivial
solutions to
Problem \eqref{problema non lineare} have an 
upper bound in terms of regularity in 
$\overline{\Omega}$
, due to the
lack of regularity on $\Gamma$.
In 
particular, Shamir \cite{Shamir} proved 
that the highest regularity one can
obtain is H\"older regularity with exponent 
$\gamma<1/2$.
Here, we can adapt the results of \cite{Stampacchia} (see also \cite{Colorado Peral, Colorado Peral 2004}) to obtain the regularity property of the solution in this anisotropic setting.
\\
The next result shows how the threshold
$d^*$ governs extinction and survival.
\begin{theorem}\label{thm par}
Let 
$v_0 \in L^\infty(\Omega)\cap H_\Dcal^1(
\Omega)$ be a positive function in $\Omega$.
Let
$\Hcal \in C^2(\mathbb{R}^N\setminus 
\left\{ 0 \right\})$ satisfy
\eqref{H positiva}-\eqref{convessità}, and
let $f$ satisfy 
assumptions \eqref{f i}-\eqref{f iv}.
If If $m$ defined in \eqref{limite f} is such that
$m^+\not\equiv 0$,
then the following statements hold.
\begin{itemize}
\item[(1)]
For every $0<d<d^{*}$, there exists 
$v\in L^{2}(0,T; H^{1}_{\Dcal}(\Omega))\cap 
C([0,T]; L^{2}(\Omega))$, with
$\partial_t v \in 
L^2(\Omega\times(0,T))$, unique, positive, bounded  
weak solution to Problem
\eqref{problema parabolico}.
Moreover, 
\[
\lim_{t\to+\infty} v(x,t)=u(x),
\]
where $u$ is the unique, positive, bounded weak solution
to Problem \eqref{problema non lineare}.
\item[(2)] For every $d \geq d^*$ there 
exists
$v\in L^{2}(0,T; H^{1}_{\Dcal}(\Omega))\cap 
C([0,T]; L^{2}(\Omega))$ unique, nonnegative, bounded 
weak solution to Problem \eqref{problema 
parabolico},
and it results
$$\lim_{t\rightarrow +\infty}v(x,t)=0.$$
\end{itemize}
\end{theorem}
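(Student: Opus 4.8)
The strategy is to read \eqref{problema parabolico} as the $L^{2}(\Omega)$-gradient flow of the energy
\[
\Ecal(w)=\frac{d}{2}\int_{\Omega}\Hcal(\nabla w)^{2}-\int_{\Omega}F(x,w),\qquad F(x,s):=\int_{0}^{s}f(x,\sigma)\,d\sigma ,
\]
since $\operatorname{div}\!\big(\Hcal(\nabla w)\nabla_{\xi}\Hcal(\nabla w)\big)=\tfrac12\operatorname{div}\!\big(\nabla_{\xi}(\Hcal^{2})(\nabla w)\big)=-\partial\Phi(w)$ for the convex functional $\Phi(w)=\tfrac d2\int_{\Omega}\Hcal(\nabla w)^{2}$, which by \eqref{convessità} is proper and lower semicontinuous on $L^{2}(\Omega)$, with $\partial\Phi$ monotone, bounded, hemicontinuous and coercive on $H^{1}_{\Dcal}(\Omega)$ (Poincaré applies because $|\Gamma_{\Dcal}|>0$). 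First I would replace $f$ by its truncation $\tilde f$ at height $\bar M:=\max(M,\|v_{0}\|_{L^{\infty}})$, globally Lipschitz in $s$ by \eqref{f iv}; then $\partial_{t}v+\partial\Phi(v)\ni\tilde f(x,v)$ is a Lipschitz perturbation of a subdifferential flow, hence well posed by the Brézis theory, giving $v\in C([0,T];L^{2})\cap L^{2}(0,T;H^{1}_{\Dcal})$, and, since $\Phi(v_{0})<\infty$, the regularizing bounds $\partial_{t}v\in L^{2}(\Omega\times(0,T))$, $v\in L^{\infty}(0,T;H^{1}_{\Dcal})$ (test with $\partial_{t}v$, equivalently $\tfrac{d}{dt}\Ecal(v(t))=-\|\partial_{t}v(t)\|_{L^{2}}^{2}$). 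A comparison principle follows by testing the difference of a sub- and a supersolution with $(v_{1}-v_{2})^{+}$: monotonicity of $\xi\mapsto\Hcal(\xi)\nabla_{\xi}\Hcal(\xi)=\nabla\big(\tfrac12\Hcal^{2}\big)(\xi)$ kills the principal term and the Lipschitz bound on $\tilde f$ closes a Grönwall estimate. This yields uniqueness and order preservation; using $0$ (a solution by \eqref{f i}) and the constant $\bar M$ (a supersolution by \eqref{f ii}) as barriers gives $0\le v\le\bar M$, so the truncation is inactive and $v$ solves \eqref{problema parabolico}; comparing $v$ from below with the nondegenerate linear flow $\partial_{t}z+\partial\Phi(z)=(m-2L)z$ and using the strong maximum principle gives $v>0$ in $\Omega$. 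This settles existence, uniqueness, positivity and boundedness in both items.

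For the asymptotics, since $\Ecal$ is bounded below on the invariant region $[0,\bar M]$ and $\tfrac{d}{dt}\Ecal(v(t))\le0$, we get $\int_{0}^{+\infty}\|\partial_{t}v(t)\|_{L^{2}}^{2}\,dt<\infty$ and, with the uniform $H^{1}$ bound, precompactness of $\{v(\cdot,t)\}$ in $L^{2}$. Along any $t_{n}\to+\infty$ with $\partial_{t}v(t_{n})\to0$ in $L^{2}$ one has $v(t_{n})\rightharpoonup V$ in $H^{1}_{\Dcal}$ and $\to V$ in $L^{2}$; passing to the limit in the weak formulation, the nonlinear term is handled because $\partial\Phi$ is pseudomonotone and $\langle\partial\Phi(v(t_{n})),v(t_{n})-V\rangle=\langle\tilde f(x,v(t_{n}))-\partial_{t}v(t_{n}),\,v(t_{n})-V\rangle\to0$, so $V$ is a nonnegative weak solution of \eqref{problema non lineare}; the same passage identifies the limit of any monotone-in-time flow. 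By Theorem \ref{esistenza sotto d star}, such $V$ is $\equiv0$, or, when $d<d^{*}$, equals $u$. Item (2): fix a constant $K\ge\bar M$; $K$ is a supersolution, so the flow $\bar v$ with datum $K$ obeys $\bar v(\cdot,h)\le K=\bar v(\cdot,0)$ and, by order preservation, is nonincreasing in $t$, hence $\bar v(\cdot,t)\downarrow\bar u$ with $\bar u$ a nonnegative stationary solution; since $d\ge d^{*}$, Theorem \ref{esistenza sotto d star}(ii) forces $\bar u\equiv0$, and $0\le v\le\bar v$ gives $v(\cdot,t)\to0$ in $L^{2}$, upgraded to the pointwise statement by the parabolic smoothing effect.

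Item (1), $0<d<d^{*}$: I would squeeze $v$ between two monotone flows converging to $u$. Upper barrier: as above the flow $\bar v$ from $K\ge\bar M$ is nonincreasing; since $u\le M\le\bar M\le K$ and $u$ is a stationary solution, order preservation gives $\bar v(\cdot,t)\ge u$, hence $\bar v(\cdot,t)\downarrow\bar u$ with $\bar u\ge u$, so $\bar u=u$ by the uniqueness in Theorem \ref{esistenza sotto d star}(i), and $v\le\bar v\downarrow u$. Lower barrier: fix $t_{0}>0$; then $v(\cdot,t_{0})>0$ in $\Omega$, and I claim $v(\cdot,t_{0})\ge\eta\,u$ in $\Omega$ for some $\eta>0$. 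Granting this, $\min(\eta,1)\,u$ is a stationary subsolution (for $\theta\in(0,1)$, $-dQ(\theta u)=\theta f(x,u)\le f(x,\theta u)$ by the positive $1$-homogeneity of the operator together with \eqref{f iii}; and $u$ itself is a solution) lying below $v(\cdot,t_{0})$, so the flow $\underline v$ from it is nondecreasing and bounded, hence $\underline v(\cdot,t)\uparrow\underline u$ with $\underline u$ a nontrivial stationary solution, i.e. $\underline u=u$; and $v(\cdot,t_{0})\ge\underline v(\cdot,0)$ gives $v(\cdot,t)\ge\underline v(\cdot,t-t_{0})\uparrow u$. The two barriers squeeze $v(\cdot,t)\to u$.

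The main obstacles are twofold. First, the comparison principle and the a priori $L^{\infty}$ bound for the anisotropic quasilinear operator under mixed conditions: the interface $\Gamma$ carries only $C^{0,\gamma}$, $\gamma<1/2$, regularity, so these must be argued in $H^{1}_{\Dcal}(\Omega)$ through the monotone/variational structure rather than through pointwise elliptic estimates. Second, ruling out extinction when $\mu_{1}(d,m)<0$: the naive candidate $\varepsilon\varphi_{1}$ (with $\varphi_{1}$ the principal eigenfunction) need not be a subsolution, because \eqref{f i}--\eqref{f iv} only give $f(x,s)/s\to m(x)$ pointwise, not uniformly in $x$; this is why the lower barrier is routed through the estimate $v(\cdot,t_{0})\ge\eta u$, which I would establish from interior positivity (strong maximum principle / weak Harnack) together with a Hopf-type boundary comparison of the vanishing rates of $v(\cdot,t_{0})$ and $u$ along $\Gamma_{\Dcal}$, including near $\Gamma$, relying on the Shamir-type description of the behavior at the corner (cf.\ \cite{Shamir}).
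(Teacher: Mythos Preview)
Your overall architecture — comparison principle from the monotonicity of $\xi\mapsto\tfrac12\nabla_\xi(\Hcal^2)(\xi)$, $L^\infty$ barriers $0$ and a large constant, and convergence via monotone-in-time flows squeezed between stationary states — is exactly the paper's. The existence argument differs (you invoke Br\'ezis' subdifferential theory; the paper runs a Sattinger monotone iteration $\Lcal(v_k)=f(x,v_{k-1})+Lv_{k-1}$ and passes to the limit using the strict convexity \eqref{hessiano H}), but both routes are standard and deliver the same regularity $\partial_tv\in L^2(\Omega_T)$.

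The substantive divergence, and the gap in your proposal, is the lower barrier in item~(1). The paper does \emph{not} route this through $v(\cdot,t_0)\ge\eta\,u$; it takes $\underline v=\varepsilon\phi$, with $\phi$ the positive eigenfunction of $\mu(d,m)$ normalized in $L^\infty$, and uses $\mu(d,m)<0$ together with \eqref{limite f} to get $f(x,\varepsilon\phi)\ge(m+\mu(d,m))\varepsilon\phi$ for $\varepsilon$ small, so that $\varepsilon\phi$ is a time-independent subsolution. Your worry that $f(x,s)/s\to m(x)$ is only pointwise is legitimate in general, but under the paper's hypotheses this is how the subsolution is produced, and it avoids any boundary comparison entirely: $\phi\in H^1_\Dcal(\Omega)\cap C^{0,\gamma}(\overline\Omega)$, the inequality is pointwise in $\Omega$, and the boundary conditions are inherited for free.

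By contrast, your proposed replacement — proving $v(\cdot,t_0)\ge\eta\,u$ via a Hopf-type comparison of vanishing rates along $\Gamma_\Dcal$ ``including near $\Gamma$'' — is precisely where the mixed-boundary setting bites. The Shamir obstruction you cite is a \emph{ceiling} on regularity (at most $C^{0,\gamma}$, $\gamma<1/2$), not a tool: near the interface $\Gamma=\overline{\Gamma_\Dcal}\cap\overline{\Gamma_\Ncal}$ there is no $C^1$ control, and no Hopf lemma for the anisotropic operator under mixed conditions is available (the paper proves nothing of the sort; its Appendix only reaches H\"older continuity via the Stampacchia/Colorado--Peral scheme). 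Without such an estimate the inequality $v(\cdot,t_0)\ge\eta\,u$ cannot be justified near $\Gamma$, and your lower squeeze does not close. If you want to keep your route, you would need an independent argument that a small multiple of $u$ (or of $\phi$) lies below $v(\cdot,t_0)$ using only $L^\infty/H^1_\Dcal$ information — which is essentially what the paper achieves by working with $\varepsilon\phi$ from the start.
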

Theorems \ref{esistenza sotto d star} and
\ref{thm par} will be proved adopting the
following strategy. First of all, we show that 
the existence of a positive weak solution
to Problem \eqref{problema non lineare}
depends on the sign of the eigenvalue
$\mu(d,m)$,
given by
\begin{equation}\label{def mu}
\mu(d,m):=\inf \left\{ \frac{d\int_\Omega 
\Hcal^2(\nabla u)-\int_\Omega mu^2}
{\int_\Omega u^2} \ : \ u \in H_	
\Dcal^1(\Omega) \ , \ u\geq 0 \ , u 
\not\equiv 
0  \right\},
\end{equation}
for every $d>0$ and for every $m \in L^\infty
(\Omega)$ (see Theorem \ref{esistenza 
ellittico}). 
Then, the same sign assumption on $\mu(d,m)$
yields the existence result for the parabolic
Problem \eqref{problema parabolico}
(see Theorem \ref{es par mu}).
Subsequently, in Section \ref{lambda}
we show that the principal eigenvalue
$\lambda(m)$
of
problem
\begin{equation}\label{problema anisotropo}
\begin{cases}
- \text{div}(\Hcal(\nabla u)\nabla _\xi 
\Hcal(\nabla u))=\lambda mu  & \text{in} \ 
\Omega ,\\
\hskip102pt \; u >0 & \text{in} \ \Omega,\\
\hskip102pt \; u=0 & \text{on } \ \Gamma_\Dcal,
\\ 
\hskip19pt \Hcal(\nabla u)\nabla_\xi 
\Hcal(\nabla u)
\cdot n=0 & \text{on } \ \Gamma_\Ncal,
\end{cases}
\end{equation}
defined, for every $m\in \Mcal$, as
\begin{equation}\label{def lambda}
\lambda(m):=\inf\left\{ \frac{\int_\Omega 
\Hcal^2(\nabla u) }{\int_\Omega mu^2} \ : 
\ u \in H_\Dcal^1(\Omega) \ , \ u\geq 0 \ , \ 
\int_\Omega mu^2>0 \right\},
\end{equation}
is achieved (see Proposition \ref{lambda 
minimo}).
This allows us to introduce the survival 
threshold and complete the proofs of Theorems
\ref{esistenza sotto d star} and \ref{thm par}.
\\
As for the Laplacian operator
with homogeneous
Dirichlet, Neumann or Robin boundary conditions,
it turns out that $d^*=1/\lambda(m)$,
thus, to maximize  the ranges of diffusion coefficients for which we have a positive asymptotical state, we need to 
study the minimization problem
\begin{equation}\label{problema di minimizzazione spettrale}
\Lambda:=\inf\left\{ \lambda(m) \ : \ 
m \in \Mcal \right\}.
\end{equation}
In the framework of mixed boundary conditions, 
Denzler \cite{Denzler,Denzler'}
studied the
behavior of the first eigenvalue of the 
Laplacian
operator with constant weight, 
depending on  
the configuration of $\Gamma_\Dcal$. He showed 
that
the minimum of
such eigenvalue on a class of subsets of $
\partial
\Omega$ with a measure constraint is taken on 
(see \cite[Theorem 5]{Denzler}). Moreover, if 
$\Omega$ is a ball, the Dirichlet 
part
of the boundary realizing the minimum is a 
spherical
cap (see \cite[Theorem 3]{Denzler'}). Later, in 
\cite{LMPPS} 
Leonori, Medina, Peral, Primo and
Soria extended
Denzler's results to the case of a nonlocal operator.
They chacarterized the arrangement of a sequence of
Neumann boundary parts, in order to prove that the sequence of the
corresponding first eigenvalues gets close to the first
eigenvalue of the same operator with 
homogeneous Dirichlet 
boundary conditions (see \cite[Theorem 1.4]{
LMPPS}).\\
In this work we have fixed $\Gamma_\Dcal$ and
$\Gamma_\Ncal$, and we want to find the
weight representing the optimal configuration
of resources inside the domain, i.e. the 
one realizing the minimal principal eigenvalue
\eqref{problema di 
minimizzazione spettrale}.
In analogy to what has been done in 
\cite{Pepisc}, 
here we perform the analysis under mixed
boundary conditions; first of all we prove
that there exists an optimal weight
which is of bang-bang type. Namely, the 
following result holds.
\begin{theorem}\label{minimo minimo}
Let $\Hcal \in C^2(\mathbb{R}^N 
\setminus \left\{ 0 \right\})$, $N\geq 1$, 
and assume \eqref{H 
positiva}-\eqref{convessità}. Problem 
\eqref{problema di minimizzazione spettrale}
is solved by $m_\omega(x)=\chi_\omega-
\beta\chi_{\omega^c}$, 
where $\omega$ is a measurable subset of $
\Omega$ such that $|\omega|
=\frac{\beta+m_0}{1+\beta}|\Omega|$. 
Moreover, if $
\varphi=\varphi(m_\omega)$ is the positive, 
bounded eigenfunction associated with $
\Lambda=\lambda(m_\omega)$, then $\omega$ is 
a superlevel set of 
$\varphi$, i.e. 
$$
\omega=\left\{ \varphi>t \right\}
$$  
for some $t>0$,
and every level set of $\varphi$ 
has zero measure.
\end{theorem}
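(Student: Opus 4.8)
The plan is to combine a rearrangement/relaxation argument with a bathtub-type characterization, adapting the strategy of \cite{Pepisc} to the mixed boundary setting. First I would reformulate \eqref{problema di minimizzazione spettrale} as a double minimization: $\Lambda=\inf_{m\in\Mcal}\inf_{u}\mathcal R(u,m)$ where $\mathcal R(u,m)=\int_\Omega\Hcal^2(\nabla u)/\int_\Omega m u^2$ over admissible $u\in H^1_\Dcal(\Omega)$, and exchange the order of the infima. For fixed $u\ge0$ with $u\not\equiv0$, minimizing $\mathcal R(u,m)$ over $m\in\Mcal$ amounts to \emph{maximizing} $\int_\Omega m u^2$ subject to the pointwise bounds $-\beta\le m\le1$ and the mass constraint $\int_\Omega m\le m_0|\Omega|$. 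This is a classical linear optimization (``bathtub principle''): the maximizer is bang-bang, $m=\chi_{\{u^2>\tau\}}-\beta\chi_{\{u^2\le\tau\}}$ for a level $\tau$ chosen so that $|\{u^2>\tau\}|=\frac{\beta+m_0}{1+\beta}|\Omega|$, provided the superlevel sets of $u$ have no mass plateau at that level; otherwise one splits the level set. Hence the inner infimum over $\Mcal$ is attained on bang-bang weights, so $\Lambda=\inf\{\lambda(m):m=\chi_\omega-\beta\chi_{\omega^c},\ |\omega|=\frac{\beta+m_0}{1+\beta}|\Omega|\}$.

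Next I would establish existence of an optimal pair. Take a minimizing sequence $(\omega_n,u_n)$ with $m_n=\chi_{\omega_n}-\beta\chi_{\omega_n^c}$ and $u_n$ the associated normalized positive eigenfunction from Proposition~\ref{lambda minimo}; the $u_n$ are bounded in $H^1_\Dcal(\Omega)$ (using coercivity from \eqref{H positiva}--\eqref{H pos omogenea}, which give $\Hcal^2(\xi)\ge c|\xi|^2$) and bounded in $L^\infty$ by the boundedness statement in Proposition~\ref{lambda minimo}, so up to a subsequence $u_n\rightharpoonup\varphi$ in $H^1_\Dcal$ and strongly in $L^2$, while $\chi_{\omega_n}\rightharpoonup g$ weak-$*$ in $L^\infty$ with $0\le g\le1$ and $\int_\Omega g=\frac{\beta+m_0}{1+\beta}|\Omega|$; set $m_\infty=g-\beta(1-g)\in\overline\Mcal$. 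Lower semicontinuity of $u\mapsto\int_\Omega\Hcal^2(\nabla u)$ (convexity of $\Hcal^2$) together with $\int_\Omega m_n u_n^2\to\int_\Omega m_\infty\varphi^2$ gives $\lambda(m_\infty)\le\liminf\lambda(m_n)=\Lambda$, so $\varphi>0$ is the eigenfunction for $m_\infty$ and $\Lambda=\lambda(m_\infty)$; the $L^\infty$ bound on $\varphi$ passes to the limit. Then applying the bathtub argument from the first paragraph to the fixed function $\varphi$ shows that replacing $m_\infty$ by the bang-bang weight $m_\omega=\chi_\omega-\beta\chi_{\omega^c}$ with $\omega=\{\varphi>t\}$, $|\omega|=\frac{\beta+m_0}{1+\beta}|\Omega|$, does not increase the Rayleigh quotient evaluated at $\varphi$, hence $\lambda(m_\omega)\le\mathcal R(\varphi,m_\omega)\le\mathcal R(\varphi,m_\infty)=\Lambda$, forcing equality; by uniqueness of the positive eigenfunction, $\varphi$ is also the eigenfunction for $m_\omega$.

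It remains to show that every level set of $\varphi$ has zero measure, which both makes the choice of $t$ (and hence $\omega$) essentially unique and rules out the degenerate splitting case. Suppose $|\{\varphi=c\}|>0$ for some $c>0$. On $\{\varphi=c\}$ one has $\nabla\varphi=0$ a.e., so from the Euler--Lagrange equation $-\operatorname{div}(\Hcal(\nabla\varphi)\nabla_\xi\Hcal(\nabla\varphi))=\Lambda m_\omega\varphi$ in the weak sense one deduces $\Lambda m_\omega\varphi=0$ a.e. on that set, i.e. $m_\omega=0$ there (since $\Lambda>0$, $\varphi=c>0$); but $m_\omega\in\{1,-\beta\}$ a.e., a contradiction. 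The main obstacle is making this last step rigorous: the identity ``$\nabla\varphi=0$ a.e. on $\{\varphi=c\}$ $\Rightarrow$ the distributional Laplacian-type term vanishes there'' is standard for the linear Laplacian but for the anisotropic, possibly degenerate quasilinear operator it needs care — one argues that $\Hcal(\nabla\varphi)\nabla_\xi\Hcal(\nabla\varphi)=0$ a.e. on $\{\varphi=c\}$ (by $0$-homogeneity of $\nabla_\xi\Hcal$ and boundedness near where $\nabla\varphi=0$, plus \eqref{convessità} controlling the behavior), then uses that for a Sobolev function the divergence of a vector field vanishing a.e. on a level set, tested against functions supported near that set, produces no contribution — this can be done via the coarea formula and a Stampacchia-type lemma, exploiting the $C^{0,\gamma}$ regularity of $\varphi$ from Theorem~\ref{esistenza sotto d star}. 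A cleaner alternative is a strong maximum principle / Harnack argument for the anisotropic operator ruling out plateaus of positive solutions directly; I would develop whichever is available in the anisotropic literature being cited.
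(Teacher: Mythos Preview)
Your existence and bang-bang part is essentially the paper's argument, just packaged differently. The paper first shows $\Lambda$ is attained in $\Mcal$ by citing the compactness argument of \cite[Theorem~3.2]{12}, then applies the bathtub principle to the eigenfunction $\varphi$ of an optimal $m$ to replace $m$ by $\chi_\omega-\beta\chi_{\omega^c}$ without increasing the Rayleigh quotient. Your route---restrict first to bang-bang weights, relax via weak-$*$ limits to a possibly non-extremal $m_\infty$, then re-extremize with bathtub---reaches the same conclusion and is a legitimate variant; the extra relaxation step is unnecessary but harmless.

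The genuine gap is in your level-set argument, and you correctly flag it as the main obstacle. Knowing that the vector field $A(\nabla\varphi):=\Hcal(\nabla\varphi)\nabla_\xi\Hcal(\nabla\varphi)$ vanishes a.e.\ on $\{\varphi=c\}$ does \emph{not} allow you to conclude that its distributional divergence vanishes a.e.\ there: a distribution supported on a null set can still be nonzero, and the divergence of an $L^2$ vector field is only known as an element of $H^{-1}$. The coarea/Stampacchia route you sketch does not work at the level of first derivatives; what is actually needed is second-order information, namely $\varphi\in W^{2,2}_{\mathrm{loc}}$ (or better), so that $\operatorname{div}A(\nabla\varphi)$ is an $L^1_{\mathrm{loc}}$ function computable pointwise, after which Stampacchia's lemma ($D^2\varphi=0$ a.e.\ on $\{\nabla\varphi=0\}$) yields $\operatorname{div}A(\nabla\varphi)=0$ a.e.\ on $\{\varphi=c\}$ and hence the contradiction $m_\omega=0$. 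The paper sidesteps this entirely by invoking \cite[Corollary~1.7]{Antonini}, an interior regularity result for inhomogeneous anisotropic quasilinear equations that directly gives the vanishing of level sets under hypotheses \eqref{H positiva}--\eqref{convessità}; your Harnack/strong-maximum-principle alternative would not suffice, since those tools rule out interior zeros of $\varphi$ but not plateaus at positive levels.
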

This result shows that $
\Lambda=\lambda(m_{\omega})$, solution to
Problem
\eqref{problema di minimizzazione 
spettrale}, is equivalently a solution
to the problem
\[
\min\left\{\lambda(\chi_{E}-
\beta\chi_{E^{c}}), \; E\subset \Omega, \ 0<|
E|\leq\delta\right\}
\]
for every $\delta \in \left(0, \frac{\beta+
m_{0}}{1+\beta}|\Omega|\right]$.
From now on,  we use the 
notation $
\Lambda=\lambda(m_\omega)=\lambda(\omega)$.
\\
Then, we pursue the study in dimension one,
where
assumptions \eqref{H positiva}, \eqref{H 
pos omogenea} and \eqref{convessità} imply 
that $\Hcal$ is given by
\begin{equation}\label{H unidimensionale}
\Hcal(x)=
\begin{cases}
\hskip8pt ax \ &\text{if} \ x>0, \\
-bx  &\text{if} \ x\leq 0,
\end{cases}
\end{equation}
with $a \neq b$, and, taking $
\Omega=(0,1)$, Problem \eqref{problema 
anisotropo} becomes 
\begin{equation}\label{problema unidmensionale DN}
\begin{cases}
\hskip7pt -(\Hcal(u')\Hcal'(u'))'=\lambda 
m u \  
&\text{in } (0,1), \\
\hskip77pt u>0 &\text{in } (0,1), \\
\hskip63pt u(0)=0, \\
\Hcal(u'(1))\Hcal'(u'(1))=0, \\
\end{cases}
\end{equation}
or 
\begin{equation}\label{problema unidimensionale ND}
\begin{cases}
\hskip7pt -(\Hcal(u')\Hcal'(u'))'=\lambda 
m u  \ 
&\text{in } (0,1), \\
\hskip77pt u>0 &\text{in} \ (0,1),\\
\hskip63pt u(1)=0, \\
\Hcal(u'(0))\Hcal'(u'(0))=0.
\end{cases}
\end{equation}
In this context we can characterize the optimal 
set $\omega$ as shown in the next 
result.
\begin{theorem}\label{D intervallo}
Let $N=1$, and $\Hcal$ be as in \eqref{H 
unidimensionale}. Then, the minimal 
principal eigenvalue associated with Problem
\eqref{problema unidmensionale DN} is 
$\Lambda=\lambda(\omega_1)$, where $\omega_1=\left( \frac{1-m_0}{1+\beta} , 
1\right)$, and the minimal principal eigenvalue
associated with problem \eqref{problema unidimensionale ND} is $\Lambda=\lambda(\omega_2)$, where $\omega_2=\left( 0,\frac{\beta+m_0}{
1+\beta}\right)$.
\end{theorem}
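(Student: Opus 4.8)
The plan is to exploit Theorem \ref{minimo minimo}, which already tells us that the optimal weight is bang-bang of the form $m_\omega = \chi_\omega - \beta\chi_{\omega^c}$ with $|\omega| = \frac{\beta+m_0}{1+\beta}$, and that $\omega$ is a superlevel set $\{\varphi > t\}$ of the associated positive eigenfunction $\varphi$. So the entire task reduces to showing that, in dimension one, this superlevel set must be the interval abutting the Neumann endpoint: for problem \eqref{problema unidmensionale DN} (Dirichlet at $0$, Neumann at $1$) the set $\omega_1 = \left(\frac{1-m_0}{1+\beta}, 1\right)$, and symmetrically $\omega_2 = \left(0, \frac{\beta+m_0}{1+\beta}\right)$ for \eqref{problema unidimensionale ND}. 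I will treat the DN case in detail; the ND case follows by the reflection $x \mapsto 1-x$, which swaps the roles of $a$ and $b$ in \eqref{H unidimensionale} but leaves the structure intact.

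First I would establish the qualitative shape of $\varphi$. Since $-(\Hcal(u')\Hcal'(u'))' = \Lambda m_\omega u \geq -\Lambda\beta u$ and $\varphi>0$, the function $\Hcal(\varphi')\Hcal'(\varphi')$ (which is, up to the constants $a^2$ or $b^2$, just $\varphi'$ itself on each sign region) is monotone wherever $m_\omega$ has a fixed sign. More precisely, on $\omega$ the right-hand side is positive so $\Hcal(\varphi')\Hcal'(\varphi')$ is strictly decreasing, and on $\omega^c$ it is strictly increasing. Combined with $\varphi(0)=0$, $\varphi>0$ in $(0,1)$, and the Neumann condition $\Hcal(\varphi'(1))\Hcal'(\varphi'(1)) = 0$ (i.e. $\varphi'(1)=0$), I expect to show that $\varphi$ is unimodal: it increases from $0$, reaches a single interior maximum at some point $x_0$, and then decreases — and in fact, because $\varphi'(1)=0$ forces the flux to vanish at the right endpoint and the flux is increasing on $\omega^c$ near $1$, one deduces $x_0$ is pushed toward $1$. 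The key monotonicity lemma here is that $\varphi' > 0$ on $(0,x_0)$ and $\varphi' \leq 0$ on $(x_0,1)$ with equality only at $x_0$ and $1$; this is where the anisotropy enters only through harmless positive constants, so the argument is essentially the same as the Laplacian case studied in \cite{8,10,21}.

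Next, the superlevel set $\omega = \{\varphi > t\}$ of a unimodal function with maximum at $x_0$ is an interval $(\alpha,\beta_*)$ containing $x_0$. I would then use a rearrangement/bathtub argument: among all admissible sets of the prescribed measure, the eigenvalue $\lambda(E)$ is made smallest by placing the resources where the eigenfunction's $L^2$-mass, weighted appropriately, is largest, and a "sliding" comparison (Pólya–Szegő type, or a direct variational comparison as in \cite{Pepisc}) shows that translating $\omega$ toward the Neumann endpoint can only decrease $\lambda$, since the homogeneous Neumann condition at $1$ imposes no penalty on $\varphi$ being large there, whereas the Dirichlet condition at $0$ forces $\varphi$ small nearby. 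Hence the optimal interval is forced to be right-anchored: $\beta_* = 1$, and then $\alpha$ is pinned down by the measure constraint $|\omega| = 1 - \alpha = \frac{\beta+m_0}{1+\beta}$, giving $\alpha = \frac{1-m_0}{1+\beta}$ exactly as claimed. The main obstacle, I anticipate, is making the "sliding decreases $\lambda$" step fully rigorous in the anisotropic one-dimensional setting: one must verify that the derivative of $\lambda(\omega_s)$ with respect to a translation parameter $s$ (for $\omega_s$ a translate of a fixed interval) has a sign, using the Hadamard-type shape-derivative formula, which will involve the boundary values $\varphi(\alpha)$ and $\varphi(\beta_*)$ of the eigenfunction at the two endpoints of the interval and the fact that these differ because of the asymmetry between the Dirichlet and Neumann ends. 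Once the shape derivative is computed and shown to be strictly negative whenever the interval is detached from $\{1\}$, the uniqueness and exact location follow, completing the proof; the ND statement is then immediate by the symmetry $x\mapsto 1-x$.
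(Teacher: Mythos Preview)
Your strategy diverges from the paper's, and the divergence is where the gap lies. The paper does \emph{not} attempt a qualitative ODE analysis followed by a sliding/shape-derivative argument. Instead, it passes directly to the monotone increasing rearrangement $\varphi^*$ of the optimal eigenfunction and $(m_\omega)^*$ of the optimal weight, checks that $(m_\omega)^*\in\Mcal$, and applies Hardy--Littlewood together with the anisotropic P\'olya inequality of \cite[Proposition~4.1]{Pepisc} to obtain
\[
\frac{\int_0^1 \Hcal^2((\varphi^*)')}{\int_0^1 (m_\omega)^*(\varphi^*)^2}\leq \frac{\int_0^1 \Hcal^2(\varphi')}{\int_0^1 m_\omega \varphi^2}=\Lambda.
\]
Since $\Lambda$ is already minimal, equality is forced everywhere, in particular $\int_0^1\Hcal^2(\varphi')=\int_0^1\Hcal^2((\varphi^*)')$, and the equality case \cite[Proposition~4.2]{Pepisc} yields that $\varphi$ is monotone. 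With $\varphi(0)=0$ and $\varphi>0$ this gives monotone \emph{increasing}, so every superlevel set is of the form $(c,1)$ and the measure constraint pins $c=\frac{1-m_0}{1+\beta}$. No sliding, no shape derivative.

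Your proposed route has a genuine gap at the sliding step. Writing $\omega_s=(s,s+\ell)$ with $\ell=|\omega|$, the Hadamard-type derivative you invoke is proportional to $\varphi_s(s)^2-\varphi_s(s+\ell)^2$, and you need this to be strictly negative for every $s\in[0,1-\ell)$. But for $s<1-\ell$ the flux analysis you sketch (correctly) shows that $\varphi_s$ is unimodal with its maximum strictly inside $(s,s+\ell)$: the flux is positive on $[0,s]$, decreases through zero inside $(s,s+\ell)$, and is negative on $[s+\ell,1)$. Nothing in this picture forces $\varphi_s(s+\ell)>\varphi_s(s)$. Worse, if the optimal interval were interior, say $\omega_{s^*}$ with $s^*+\ell<1$, then by Theorem~\ref{minimo minimo} it is a superlevel set, so $\varphi_{s^*}(s^*)=\varphi_{s^*}(s^*+\ell)=t$ and the first-order shape derivative vanishes identically---you extract no contradiction. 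A second-order analysis, or a separate comparison argument showing $\varphi_s(s+\ell)>\varphi_s(s)$ for all non-optimal $s$, would be required, and none is supplied; filling this in essentially amounts to proving the monotonicity that the rearrangement delivers for free. Your unimodality claim for the \emph{optimal} $\varphi$ is also incomplete as stated: the ODE analysis alone (local maxima lie in $\omega$, local minima in $\omega^c$) does not exclude multiple oscillations of $\varphi$ and hence multiple components of $\omega$; optimality must enter, and again the clean way to make it enter is the rearrangement inequality.
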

This Theorem shows that, at least in dimension 
one, the anisotropy does not control where 
the optimal set is located,  this fact 
only depends on the boundary conditions.
Let us observe that Theorem \ref{D intervallo} 
is new also in the case 
$\Hcal(\xi)=|\xi|$ and it enlightens the
strong dependence of the optimal set on
the boundary conditions.
\\
The paper is organized as follows.
In section \ref{setting} we give the general 
setting of the problem, then in Section 
\ref{parabolico} we study the parabolic
Problem \eqref{problema parabolico} and
the elliptic Problem
\eqref{problema non lineare}. Section
\ref{lambda} is devoted to the study of the
principal eigenvalue $\lambda(m)$ and to the 
proof of Theorems \ref{esistenza sotto d star},
\ref{thm par},
\ref{minimo minimo} and \ref{D intervallo}.
Finally, in appendix \ref{reg} 
we study the regularity properties of a weak 
solution to Problems \eqref{problema non 
lineare}.
\section{Setting of the Problem}\label{setting}
As preliminar observations regarding the 
function $\Hcal$, let us note that,
under the assumptions \eqref{H positiva} and 
\eqref{H pos omogenea}, $\Hcal $ satisfies
the following growth condition
\begin{equation}\label{condizione di crescita H}
\underline{\alpha}|\xi|\leq \Hcal(\xi)\leq 
\overline{\alpha}|\xi| \ \ , \ \text{with} \ 
\overline{\alpha}>\underline{\alpha}>0 \ , \ 
\text{for all } \xi \in \mathbb{R}^N\setminus 
\left\{ 0 \right\}.
\end{equation}
In view of  \eqref{convessità}, one can prove 
that $\Hcal^2$ is strictly convex, and that 
there exists $\Xi>0$ such that for every $\xi 
\in \mathbb{R}^N\setminus \left\{ 0 \right\}$ 
one has
\begin{equation}\label{hessiano H}
\text{Hess}(H^2)(\xi)_{ij}\zeta_i 
\zeta_j \geq \Xi |\zeta|^2.
\end{equation}
Moreover, \eqref{H pos omogenea} yields that 
\begin{equation}\label{(2.5) Montoro}
\nabla_\xi \Hcal(\xi)\cdot\xi 
=\Hcal(\xi) \ \ \text{for all} \ \xi 
\in \mathbb{R}
^N \setminus \left\{ 0 \right\},
\end{equation}
and again assumption \eqref{H pos omogenea} 
implies the 0-homogeneity of $\nabla 
\Hcal$, thus there exists
$Q>0$ such that
\begin{equation}\label{(2.6) Montoro}
|\nabla_\xi \Hcal(\xi)|\leq Q , \ \ \text{for 
all} \ \xi \in \mathbb{R}^N \setminus 
\left\{ 0 \right\}.
\end{equation}
Note that
assumption \eqref{f iii} implies that
there exists $\lim_{s\rightarrow 0^+}
\frac{f(x,s)}{s}$. In condition \eqref{limite 
f} it is assumed $m\in L^\infty(\Omega)$.
Moreover, 
hypotheses \eqref{f iii} and \eqref{limite f}
yield that
\begin{equation}\label{rmk coercività}
f(x,s)< m(x)s \quad \text{for all } s
>0.
\end{equation}
Indeed, for all $0<t<s$ one has
$$\frac{f(x,s)}{s}<
\frac{f(x,t)}{t},$$
then, taking the limit as $t \rightarrow 0^+$
one obtains \eqref{rmk coercività}.
We will study the problem in the functional 
space
\begin{equation}\label{spazio}
H_\Dcal^1(\Omega):=\left\{ u \in H^1(\Omega) 
\ : \ \tr (u)=0 \ \text{on} \ \Gamma_\Dcal 
\right\},
\end{equation}
where $ \tr (u)$ denotes the image of the 
trace operator acting on $u$.
According to \cite[Theorem 3.1]{4'}, 
$H_\Dcal^1(\Omega)$ is defined as the closure 
of 
the space 
$C_\Dcal^\infty(\Omega):=C_0^\infty\left(\Omega 
\cup \Gamma_\Ncal\right)$ 
with respect to the norm in $H^1(\Omega)$.
Since $H_\Dcal^1(\Omega)$  is a closed 
subspace  
of $H^1(\Omega)$,  it is  a separable Hilbert 
space. We will denote with
$\| \cdot \|$ the 
norm in $H_\Dcal^1(\Omega)$, with $\| \cdot\|
_p$ the norm in $L^p(\Omega)$, for every 
$1\leq p \leq \infty$, and with $(\cdot,
\cdot)_2$ the scalar product in 
$L^2(\Omega)$.\\
Let us observe that the norm in $H^1(\Omega)$ 
and the $L^2$-norm of the gradient are two 
equivalent norms in $H_\Dcal^1(\Omega)$, 
indeed, the following holds.
\begin{proposition}\label{poincare}
There exists $c>0$ such that $|| u ||_2 \leq 
c ||\nabla u||_2$, for all $u \in 
H_\Dcal^1(\Omega)$.
\end{proposition}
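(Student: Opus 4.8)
The plan is to argue by contradiction, exploiting the compact embedding $H^1(\Omega)\hookrightarrow L^2(\Omega)$ (Rellich--Kondrachov), which holds because $\Omega$ is a bounded domain of class $C^2$. Suppose no such constant $c$ exists. Then for every $n\in\mathbb{N}$ there is $u_n\in H_\Dcal^1(\Omega)$ with $\|u_n\|_2=1$ and $\|\nabla u_n\|_2<1/n$. In particular $\|u_n\|_{H^1(\Omega)}$ is bounded, so, up to a subsequence which we do not relabel, $u_n\rightharpoonup u$ weakly in $H^1(\Omega)$ and $u_n\to u$ strongly in $L^2(\Omega)$.

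Since $\|\nabla u_n\|_2\to 0$, weak lower semicontinuity of the gradient norm gives $\|\nabla u\|_2=0$, hence $\nabla u=0$ a.e. and, $\Omega$ being connected, $u$ equals a constant $\kappa$. Passing to the limit in $\|u_n\|_2=1$ and using $u_n\to u$ in $L^2(\Omega)$ yields $\|u\|_2=1$, so $\kappa\neq 0$. On the other hand, $H_\Dcal^1(\Omega)$ is a closed linear subspace of $H^1(\Omega)$, hence weakly closed; since $u_n\in H_\Dcal^1(\Omega)$ and $u_n\rightharpoonup u$, we get $u\in H_\Dcal^1(\Omega)$, i.e. $\tr(u)=0$ on $\Gamma_\Dcal$.

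We reach a contradiction: the trace of the nonzero constant function $\kappa$ is the constant $\kappa$, which does not vanish on $\Gamma_\Dcal$ since $|\Gamma_\Dcal|>0$ (for $N=1$, $\Gamma_\Dcal$ is an endpoint of $\Omega$ and the trace is the pointwise value $\kappa\neq 0$). Therefore the inequality must hold for some $c>0$. The only point that requires the structural hypotheses on the domain is this last step, where the identification of the weak limit as an element of $H_\Dcal^1(\Omega)$, combined with $|\Gamma_\Dcal|>0$, rules out the surviving nonzero constant; the remainder is the classical Rellich compactness argument and presents no real difficulty.
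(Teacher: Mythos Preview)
Your proof is correct and follows essentially the same approach as the paper: both argue by contradiction, normalize in $L^2$, use Rellich--Kondrachov compactness together with weak lower semicontinuity of the gradient norm to obtain a constant weak limit in $H_\Dcal^1(\Omega)$, and then derive a contradiction from the vanishing trace on $\Gamma_\Dcal$. The only cosmetic difference is that you make explicit the role of $|\Gamma_\Dcal|>0$ in ruling out a nonzero constant, which the paper leaves implicit.
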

\begin{proof}
The proof can be obtained arguing as in 
\cite[Lemma 3.1]{12}, we include here 
details for the reader's convenience.
By contradiction, suppose that there 
exists $\left\{ u_k \right\}\subset 
H_\Dcal^1(\Omega)$ such that $$\int_\Omega |
u_k|^2 \geq k \int_\Omega |\nabla u_k|^2,$$ 
and up to a normalization we can take $\| 
u_k \|_2=1$. Then $\left\{ u_k \right\}$ is 
bounded in $H^1(\Omega)$, and since 
$H_\Dcal^1(\Omega)$ is weakly closed, there 
exists $u \in H_\Dcal^1(\Omega)$ such that, 
up 
to a sub-sequence, $$u_k \rightharpoonup u 
\ \text{in} \ H_\Dcal^1(\Omega).$$
Since $H^1(\Omega)$ is compactly embedded in 
$L^2(\Omega)$, one has
$$1=\lim_{k\rightarrow +\infty}\int_\Omega |
u_k|^2=\int_\Omega |u|^2.$$ 
Moreover, thanks to the lower semi-continuity 
of the $L^2$-norm of the gradient we know 
that $$\int_\Omega |\nabla u|^2 \leq 
\liminf_{k \rightarrow + \infty}\int_\Omega |
\nabla u_k|^2 \leq \liminf_{k \rightarrow + 
\infty} \frac{1}{k}\int_\Omega |u_k|^2=0,$$ 
thus $\nabla u=0$ a.e. in $\Omega$, and so 
$u$ is costant, but since $\tr(u)=0$ on $
\Gamma_\Dcal$, this means that $u\equiv 0$ 
a.e. 
in $\Omega$, which contradicts $\| u \|_2=1$.
\end{proof}
From now on we will consider 
$H_\Dcal^1(\Omega)$ endowed with the norm $\| 
u \|:=\| \nabla u \|_2$.\\
In particular, Proposition \ref{poincare} 
and assumption \eqref{condizione di crescita 
H} yield that $\| \Hcal(\nabla\cdot )\|_2$
is an equivalent norm in $H_\Dcal^1(\Omega)$.
\section{The nonlinear Problems}\label{parabolico}
This section is devoted to the study of the
parabolic and elliptic problems
\eqref{problema 
parabolico}
\eqref{problema non lineare}.
In particular, we will prove
Theorems \ref{esistenza sotto d star} and
\ref{thm par}.
Throughout this section we will always assume that
$\Hcal$ satisfies assumptions 
\eqref{H positiva}-\eqref{convessità}, and that
the nonlinearity $f$ satisfies 
\eqref{f i}-\eqref{f iv}.
We will be first focused on the elliptic Problem 
\eqref{problema non lineare},
obtaining the existence of a positive 
solution by showing the existence of an 
ordered couple of sub- and super-solutions 
defined as follows.
\begin{dfz}
We say that $u \in H^1(\Omega)$ is a weak 
super-solution to Problem \eqref{problema 
non lineare} if 
\begin{itemize}
\item[•]for all $\phi\in 
C_\Dcal^\infty(\Omega)
$, $\phi \geq 0$, one has $$d\int_\Omega 
\Hcal(\nabla u)\Hcal_\xi (\nabla u)
\cdot\nabla \phi-\int_\Omega f(x,u)\phi\geq 
0;$$
\item[•]$u\geq 0$ on $\Gamma_\Dcal$.
\end{itemize}
The definition of sub-solution can be given 
analogously, changing the sign of the 
inequalities.\\
$u \in H_\Dcal^1(\Omega)$ is a weak solution 
if it is both a sub- and super-solution.
\end{dfz}
\begin{proposition}\label{propo sopra e sotto soluzioni}
If there exist $\underline{u},\overline{u} 
\in H^1(\Omega)$ respectively  weak 
sub- and super-solutions to Problem 
\eqref{problema non lineare} such that $
\underline{c}\leq \underline{u}\leq 
\overline{u}\leq \overline{c}$, where $
\underline{c},\overline{c} \in \mathbb{R}$, 
then there exists $u \in H_\Dcal^1(\Omega)$ 
weak 
solution, such that 
$$\underline{u}\leq u \leq \overline{u} \ 
\text{a.e. in} \ \Omega.$$
\end{proposition}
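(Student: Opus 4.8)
The plan is to use the classical monotone iteration (method of sub- and super-solutions) adapted to the anisotropic operator and mixed boundary conditions. First I would fix a constant $k>0$ large enough so that, by assumption \eqref{f iv}, the map $s\mapsto f(x,s)+ks$ is nondecreasing on the interval $[\underline c,\overline c]$ for a.e. $x\in\Omega$; concretely $k:=L$ works with $A=\max\{|\underline c|,|\overline c|\}$. Then I would set up the iteration scheme $u_0:=\overline u$ and, given $u_{j}\in H^1(\Omega)$ with $\underline u\le u_j\le\overline u$, define $u_{j+1}\in H^1_\Dcal(\Omega)$ as the unique weak solution of the \emph{linear} (in the principal part, $T$-monotone) problem
\begin{equation*}
-d\,\mathrm{div}\bigl(\Hcal(\nabla u_{j+1})\nabla_\xi\Hcal(\nabla u_{j+1})\bigr)+k\,u_{j+1}
= f(x,u_j)+k\,u_j \quad\text{in }\Omega,
\end{equation*}
with $u_{j+1}=0$ on $\Gamma_\Dcal$ and the conormal condition $\Hcal(\nabla u_{j+1})\nabla_\xi\Hcal(\nabla u_{j+1})\cdot n=0$ on $\Gamma_\Ncal$. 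Existence and uniqueness of $u_{j+1}$ follows because the associated energy functional $J(w)=\tfrac{d}{2}\int_\Omega\Hcal^2(\nabla w)+\tfrac{k}{2}\int_\Omega w^2-\int_\Omega (f(x,u_j)+ku_j)\,w$ is, by \eqref{hessiano H} and Proposition \ref{poincare}, strictly convex, coercive and weakly lower semicontinuous on $H^1_\Dcal(\Omega)$, hence attains a unique minimizer, whose Euler--Lagrange equation is exactly the problem above (here one uses $\nabla_\xi(\Hcal^2)(\xi)=2\Hcal(\xi)\nabla_\xi\Hcal(\xi)$).

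Next I would prove by induction the chain of inequalities
\[
\underline u\le\cdots\le u_{j+1}\le u_j\le\cdots\le u_0=\overline u\qquad\text{a.e. in }\Omega.
\]
The base step $u_1\le u_0=\overline u$ uses that $\overline u$ is a super-solution: subtracting the equations, $-d\,\mathrm{div}(\Hcal(\nabla u_1)\nabla_\xi\Hcal(\nabla u_1)-\Hcal(\nabla\overline u)\nabla_\xi\Hcal(\nabla\overline u))+k(u_1-\overline u)\le 0$; testing with $(u_1-\overline u)^+\in H^1_\Dcal(\Omega)$ (legitimate since $\overline u\ge 0$ on $\Gamma_\Dcal$ and $u_1=0$ there) and using the strict monotonicity of the vector field $\xi\mapsto\Hcal(\xi)\nabla_\xi\Hcal(\xi)$, which is the gradient of the strictly convex $\tfrac12\Hcal^2$, one gets $\int_\Omega|\nabla(u_1-\overline u)^+|^2\lesssim 0$ and hence $(u_1-\overline u)^+\equiv 0$. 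Similarly $u_1\ge\underline u$ using that $\underline u$ is a sub-solution and the monotonicity of $s\mapsto f(x,s)+ks$; and the inductive step $u_{j+1}\le u_j$ follows the same way, now exploiting $f(x,u_j)+ku_j\le f(x,u_{j-1})+ku_{j-1}$.

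Finally I would pass to the limit. The sequence $\{u_j\}$ is monotone and pointwise bounded between the constants $\underline c$ and $\overline c$, so $u_j\downarrow u$ pointwise a.e. and in $L^2(\Omega)$ by dominated convergence. Using $u_j-u_{j+1}$ (or $u_j$ itself) as test function and the coercivity estimate \eqref{hessiano H}, one gets a uniform bound on $\|\nabla u_j\|_2$, hence (up to a subsequence, but the full sequence by the identified limit) $u_j\rightharpoonup u$ in $H^1_\Dcal(\Omega)$; moreover a standard argument testing with $u_j-u_{j+1}$ gives $\nabla u_j\to\nabla u$ strongly in $L^2$, which lets one pass to the limit in the nonlinear term $\Hcal(\nabla u_j)\nabla_\xi\Hcal(\nabla u_j)$ using the continuity of $\nabla_\xi\Hcal$ on $\mathbb R^N\setminus\{0\}$ together with the bounds \eqref{condizione di crescita H}, \eqref{(2.6) Montoro}, and in $f(x,u_j)$ by \eqref{f iv} and dominated convergence. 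Thus $u\in H^1_\Dcal(\Omega)$ is a weak solution of \eqref{problema non lineare} with $\underline u\le u\le\overline u$ a.e. I expect the main obstacle to be the strong $H^1$-convergence needed to handle the anisotropic flux term: the vector field $\xi\mapsto\Hcal(\xi)\nabla_\xi\Hcal(\xi)$ is only monotone (not uniformly), so one has to lean on the strict convexity \eqref{hessiano H} of $\Hcal^2$ to extract $\nabla u_j\to\nabla u$ in $L^2$ before identifying the weak limit of the flux, which is the one genuinely nonroutine point; everything else is the standard sub/super-solution machinery.
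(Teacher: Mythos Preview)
Your monotone--iteration argument is correct, but the paper takes a genuinely different route. Rather than iterating, it directly minimizes the energy functional
\[
\Ecal(u)=\frac{d}{2}\int_\Omega\Hcal^2(\nabla u)-\int_\Omega\Fcal(x,u)
\]
over the closed convex constraint set $\Bcal=\{u\in H^1_\Dcal(\Omega):\underline u\le u\le\overline u\}$, following Struwe \cite[Theorem~2.4]{24} and \cite{Pepisc}. Coercivity on $\Bcal$ and weak lower semicontinuity produce a minimizer $u\in\Bcal$ in one step; the constrained minimizer is then shown to be an unconstrained weak solution by the standard ``box projection'' trick: for each $\varphi\in C^\infty_\Dcal(\Omega)$ one builds the competitor $v_\epsilon=u+\epsilon\varphi-\varphi^\epsilon+\varphi_\epsilon\in\Bcal$ (with $\varphi^\epsilon=(u+\epsilon\varphi-\overline u)^+$, $\varphi_\epsilon=(\underline u-u-\epsilon\varphi)^+$) and lets $\epsilon\to0$ in the variational inequality $D(u,v_\epsilon-u)\ge0$. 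The paper's approach is shorter precisely because it avoids the one step you flag as nonroutine---the strong $H^1$ convergence needed to pass to the limit in the anisotropic flux---since the solution is obtained by a single minimization rather than as a limit of iterates. Your approach, by contrast, is constructive and in fact mirrors what the paper does later for the \emph{parabolic} problem (Theorem~\ref{teorema esistenza par }), where a Sattinger-type iteration with the shifted nonlinearity $F(x,s)=f(x,s)+Ls$ is used and the strong gradient convergence is extracted via the uniform convexity \eqref{hessiano H}, exactly as you propose here.
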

\begin{proof} The proof closely follows 
\cite[Theorem 2.4]{24} (see also 
\cite{Pepisc}, where the authors prove it for 
Robin boundary conditions). Here we just 
point out the differences. \\
We set $$\Bcal:=\left\{ u \in 
H_\Dcal^1(\Omega) \ 
: \ \underline{u}\leq u \leq \overline{u} \ 
\text{a.e. in} \ \Omega \right\},$$
$\Fcal(x,u):=\int_0^{u(x)}f(x,s)ds$, and $
\Ecal:H_\Dcal^1(\Omega) \rightarrow 
\mathbb{R}$ as the functional 
$$\Ecal(u):=\frac{d}{2}\int_\Omega 
\Hcal^2(\nabla u)-\int_\Omega \Fcal(x,u).$$
Assumptions \eqref{f iv}, \eqref{condizione di crescita H} yield
that $\Ecal$ is coercive on $\Bcal$. 
Moreover,
the compact embedding of 
$H_\Dcal^1(\Omega)$ into $L^2(\Omega)$ 
implies that
$\Ecal$ is weakly 
lower semi-continuous.
Then, there exists
$u \in \Bcal$, minimum point of $\Ecal$.
Setting $$D(u,w):=d\int_\Omega 
\Hcal(\nabla u)\nabla_\xi \Hcal(\nabla u)
\cdot\nabla w- \int_\Omega f(x,u)w, \quad
\text{for all } w \in H_\Dcal^1(\Omega),$$ 
assumptions \eqref{condizione di crescita H} and \eqref{f iv}
allow us to apply Lebesgue Theorem and obtain
\begin{align*}
\langle d_G\Ecal(u),v-
u\rangle=\lim_{\epsilon \rightarrow 0}
\frac{\Ecal(u+\epsilon(v-u))-\Ecal(u)}
{\epsilon}=D(u,v-u)\geq 0.
\end{align*}
Where $d_G$ denotes the Gateaux derivative and the last inequality holds since $u$ is a minimum point on a convex set.
For every $
\varphi \in C_\Dcal^\infty(\Omega)$
we define $v_\epsilon:=u+\epsilon\varphi-
\varphi^\epsilon+\varphi_\epsilon$ and 
$$
\varphi^\epsilon:=\max \left\{ 0,u+
\epsilon\varphi-\overline{u} \right\} \ , \ 
\varphi_\epsilon:=\max\left\{ 0, 
\underline{u}-(u+\epsilon\varphi) \right\}.
$$
Let us show that $v_\epsilon\in H_\Dcal^1(\Omega)
$.
Since $\varphi \in C_\Dcal^\infty(\Omega)$ 
and $u \in H_\Dcal^1(\Omega)$, it results
\[
u+
\epsilon\varphi -\overline{u}= u -
\overline{u}\leq 0, \quad
\underline{u}-u-\epsilon\varphi=\underline{u}-u
\leq 0
\quad \text{on } \Gamma_\Dcal,
\]
so that 
$\varphi^\epsilon \equiv \varphi_\epsilon\equiv 0$ on 
$\Gamma_\Dcal$ (in the sense of the 
Trace operator), then $v_\epsilon\in 
H_\Dcal^1(\Omega)$.
From now on, the proof can be concluded as in 
\cite{Pepisc}.
\end{proof}
Propostion \ref{propo sopra e sotto 
soluzioni} guarantees that, to have 
existence of a bounded weak solution to 
Problem \eqref{problema non lineare}, it is 
sufficient to find $\underline{u}\leq 
\overline{u}$ bounded weak sub- and 
super-solutions respectively. In particular, 
we will see that 
the existence of a positive weak sub-solution
depends on the sign of
$\mu(d,m)$, defined in \eqref{def mu}.
Let us first prove that it is attained.
\begin{proposition}\label{l'autovalore è minimo}
For every $m \in L^\infty(\Omega)$, $\mu(d,m)$ 
defined in
\eqref{def mu} is attained by a function
$u \in H_\Dcal^1
(\Omega)$.
Moreover, $u \in 
C^{0,\gamma}
\left(\overline{\Omega}\right)$, for some $
\gamma 
\in (0,1/2)$.
\end{proposition}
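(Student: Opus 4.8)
The plan is to use the direct method of the calculus of variations for the attainment, and then a standard regularity bootstrap for the Hölder estimate. First I would check that $\mu(d,m)$ is finite: by the growth condition \eqref{condizione di crescita H} and $m\in L^\infty(\Omega)$, every admissible $u$ satisfies
\[
\frac{d\int_\Omega \Hcal^2(\nabla u)-\int_\Omega m u^2}{\int_\Omega u^2}\ \geq\ -\|m\|_\infty ,
\]
hence $\mu(d,m)\geq -\|m\|_\infty>-\infty$. Since the Rayleigh quotient in \eqref{def mu} is $0$-homogeneous, I would pick a minimizing sequence $\{u_k\}\subset H_\Dcal^1(\Omega)$ with $u_k\geq 0$ and $\|u_k\|_2=1$; then $d\int_\Omega\Hcal^2(\nabla u_k)-\int_\Omega m u_k^2\to\mu(d,m)$, and since $\int_\Omega m u_k^2\leq\|m\|_\infty$ the quantities $\int_\Omega\Hcal^2(\nabla u_k)$ stay bounded, so by \eqref{condizione di crescita H} and Proposition \ref{poincare} the sequence $\{u_k\}$ is bounded in $H_\Dcal^1(\Omega)$.

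Next, up to a subsequence $u_k\rightharpoonup u$ in $H_\Dcal^1(\Omega)$ (which is weakly closed, being a closed subspace of the Hilbert space $H^1(\Omega)$), and $u_k\to u$ in $L^2(\Omega)$ by the compact embedding $H^1(\Omega)\hookrightarrow L^2(\Omega)$; in particular $\|u\|_2=1$, so $u\not\equiv 0$, and $u\geq 0$ a.e.\ by passing to the a.e.\ limit. Moreover $\int_\Omega m u_k^2\to\int_\Omega m u^2$ since $m\in L^\infty(\Omega)$, while $w\mapsto\int_\Omega\Hcal^2(\nabla w)$ is sequentially weakly lower semicontinuous on $H^1(\Omega)$ because $\Hcal^2$ is convex (indeed strictly convex, see \eqref{hessiano H}) and nonnegative. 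Combining these facts,
\[
\mu(d,m)\ \leq\ d\int_\Omega\Hcal^2(\nabla u)-\int_\Omega m u^2\ \leq\ \liminf_{k\to\infty}\Big(d\int_\Omega\Hcal^2(\nabla u_k)-\int_\Omega m u_k^2\Big)\ =\ \mu(d,m),
\]
so $u$ realizes the infimum.

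For the regularity, writing the Euler--Lagrange equation associated with the minimization shows that $u$ is a weak solution of $-d\,\text{div}(\Hcal(\nabla u)\nabla_\xi\Hcal(\nabla u))=\big(m(x)+\mu(d,m)\big)u$ in $\Omega$, with $u=0$ on $\Gamma_\Dcal$ and $\Hcal(\nabla u)\nabla_\xi\Hcal(\nabla u)\cdot n=0$ on $\Gamma_\Ncal$; the right-hand side is $g(x)u$ with $g\in L^\infty(\Omega)$. A Stampacchia-type truncation argument --- testing the equation with $(u-k)^+$, which lies in $H_\Dcal^1(\Omega)$ since $u$ vanishes on $\Gamma_\Dcal$, and using the ellipticity in \eqref{hessiano H}--\eqref{condizione di crescita H} --- gives $u\in L^\infty(\Omega)$. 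Once $u$ is bounded the right-hand side belongs to $L^\infty(\Omega)$, and then $u\in C^{0,\gamma}(\overline{\Omega})$ for some $\gamma\in(0,1/2)$ follows from the regularity result for bounded weak solutions of \eqref{problema non lineare} proved in Appendix \ref{reg}, obtained by adapting \cite{Stampacchia, Colorado Peral, Colorado Peral 2004}, the restriction $\gamma<1/2$ being dictated by the junction $\Gamma$ between $\Gamma_\Dcal$ and $\Gamma_\Ncal$ (cf.\ Shamir \cite{Shamir}).

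The existence part is routine; the delicate point is the Hölder regularity up to $\partial\Omega$, and in particular near $\Gamma$, where the switch from Dirichlet to Neumann conditions forces the loss of regularity and where the linear estimates of \cite{Stampacchia, Colorado Peral, Colorado Peral 2004} must be transferred to the present anisotropic quasilinear operator; this is exactly the content deferred to Appendix \ref{reg}, which I would invoke here.
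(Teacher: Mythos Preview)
Your argument is correct and follows essentially the same route as the paper: direct method on the normalized constraint set for attainment, derivation of the Euler--Lagrange equation, an $L^\infty$ bound, and then H\"older regularity via Appendix~\ref{reg}. The only variations are that the paper obtains the $L^\infty$ estimate by a Moser-type iteration (testing with $[T_h(u)]^{2k+1}$ as in \cite{DG'}) rather than Stampacchia truncation with $(u-k)^+$, and it additionally applies the Harnack inequality of \cite{Trudinger} to conclude that the minimizer is strictly positive---a fact not asserted in the proposition but used later.
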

\begin{proof}
Let us first point out that, 
in view of assumption \eqref{H pos omogenea}, 
 minimizing on the set of function $ v \in 
H_\Dcal^1(\Omega)$ and such that $v\geq 0 \ , \ 
 v\not\equiv 0 $ is equivalent 
to minimizing on $$\Gcal:=\left\{ v \in 
H_\Dcal^1(\Omega) \ : \  v \geq 0, \ 
\int_\Omega v^2 =1 \right\}.$$
Let us also observe that $\mu(d,m) \in 
\mathbb{R}$. Indeed, for every $u \in 
H_\Dcal^1(\Omega)$ with $\| u\|_2=1$, it
results
$$d\int_\Omega \Hcal^2(\nabla u)-
\int_\Omega mu^2\leq
d\overline{\alpha}^2\| \nabla u\|_2^2+
\| m \|_\infty.$$ 
Let  $\left\{ u_n \right\}
\subset \Gcal$ be a  minimizing sequence, then 
$\int_\Omega \Hcal^2(\nabla u_n)$ is bounded,
and thanks to assumption \eqref{condizione di 
crescita H} we get that 
$\left\{ u_n \right\}$
is bounded in $H_\Dcal^1(\Omega)$. As a consequence, 
there exists $u \in 
H_\Dcal^1(\Omega)$ such that, up to a sub-sequence, $u_n 
\rightharpoonup u$ in $H_\Dcal^1(\Omega)$, 
$u_n\rightarrow u$ strongly $L^2(\Omega)$, and 
almost everywhere. 
Hence $$1=\lim_{n\rightarrow +\infty} 
\int_\Omega 
u_n^2=\int_\Omega u^2,$$
so that $u \in \Gcal$, and $$d\int_\Omega 
\Hcal^2(\nabla u)- \int_\Omega mu^2\geq 
\mu(d,m).$$
Moreover, since $u_n \rightarrow u$ a.e. in 
$\Omega$, $u \geq 0$ in $\Omega$, and
\eqref{limite f} allows us to apply Lebesgue 
Theorem and obtain
$$\int_\Omega mu_n^2 \rightarrow 
\int_\Omega mu^2.$$ 
On the other hand,  the weak 
lower semicontinuity of the norm of $
\Hcal(\nabla \cdot)$ in $L^2$ yields
$$d\int_\Omega \Hcal^2(\nabla u)- 
\int_\Omega mu^2\leq \liminf_n \left( 
d\int_\Omega \Hcal^2(\nabla u_n)- 
\int_\Omega mu_n^2\right)= \mu(d,m),$$
so that $$d\int_\Omega \Hcal^2(\nabla u)- 
\int_\Omega mu^2=\mu(d,m).$$
This implies that $u\in H_\Dcal^1(\Omega)$ is a weak solution of
\begin{equation}\label{problema mu}
\begin{cases}
-d \text{div}(\Hcal(\nabla u)\nabla_\xi 
\Hcal(\nabla u))-mu=\mu(d,m)u \ &\text{in } 
\Omega,\\
\hskip135pt u\geq 0 &\text{in }  \Omega,\\
\hskip135pt u=0 &\text{on } \Gamma_\Dcal,\\
\hskip50pt \Hcal(\nabla u)\nabla_\xi 
\Hcal(\nabla u)\cdot n =0 &\text{on }  
\Gamma_\Ncal.
\end{cases}
\end{equation}
In order to prove that $u \in 
L^\infty(\Omega)$ one can argue as in
\cite[Theorem 3.1]{DG'}, here we only highlight 
the differences due to the
functional space $H_\Dcal^1(\Omega)$. Let $k$ be 
a positive costant, and 
let us take $\psi=\left[ T_h(u) 
\right]^{2k+1}$, where $T_h(s):=\min\left\{ 
s,h \right\}$, i.e. 
\begin{equation*}
\psi(x)=
\begin{cases}
u^{2k+1}(x) \ &\text{in } \left\{ x \in 
\Omega \ : \ u(x)\leq h \right\},\\
h^{2k+1} \ &\text{in }\left\{ x \in \Omega \ 
: \ u(x)> h \right\}.
\end{cases}
\end{equation*}
Thus, it results $\psi \in H^1(\Omega)$, and 
since $Tr(u)=0$ on $\Gamma_\Dcal$, 
$Tr(\psi)=Tr(u^{2k+1})=0$ on $\Gamma_\Dcal$, 
so that
we can take $\psi$ as test function in 
\eqref{problema mu}. Assumptions
\eqref{condizione di crescita H} and  
\eqref{(2.5) Montoro} yield
\begin{align*}
\begin{split}
\underline{\alpha}^2(2k+1)\int_{\left\{ u\leq 
h \right\}}|\nabla u|^2 u^{2k}&\leq(2k+1)
\int_{\left\{u\leq h\right\}}u^{2k}
\Hcal^2(\nabla u)\\
&=\int_{\left\{ u\leq h \right\}} 
\Hcal(\nabla u)\nabla_\xi \Hcal(\nabla u)
\cdot[(2k+1)u^{2k}\nabla u]\\
&=\int_{\Omega}  
\Hcal(\nabla u)\nabla_\xi \Hcal(\nabla u) 
\cdot \nabla \psi\\
&=\frac{1}{d} \left\{\int_\Omega mu\psi+
\mu(d,m)\int_\Omega u \psi\right\}\\
&\leq \frac{1}{d}\left[\| m \|_\infty+
\mu(d,m)\right] \int_\Omega u^{2(k+1)}.
\end{split}
\end{align*}
From now on, we can obtain an $L^\infty$ 
estimate for $u$ using the same argument in 
\cite[Theorem 3.1]{DG'}.
As a consequence, we can use the 
Harnack inequality proved in \cite[Theorem 
1.1]{Trudinger} to get that $u$ is positive 
in $\Omega$.
Finally, by applying Theorem \ref{regolarità}
we obtain $u \in C^{0,\gamma}(\overline{
\Omega})$, for some $\gamma \in (0,1/2)$.
\end{proof}
\begin{rmk}
Let us point out that the existence of $\mu $ has been obtained by minimization
on the cone of nonnegative functions due to  assumption \eqref{H pos omogenea}.
This restriction is not needed anymore if $ \Hcal(t\xi)=|t|\Hcal(\xi)$ for all
$\xi \in \mathbb{R}^N$ (see \cite{Pepisc} for more details with this respect).
\end{rmk} 

\begin{rmk}
Although the argument of \cite{DG'} can be
exploited to prove that every solution to the
nonlinear Problem \eqref{problema non lineare}
is bounded, we do not need it in this context,
as we obtain the unique solution 
constrained between two
bounded sub- and super-solutions as shown in the following result.
\end{rmk}
The sign of $\mu(d,m)$ acts as a threshold for the existence of a positive solution
to Problem \eqref{problema non lineare} as the next result shows.
\begin{theorem}\label{esistenza ellittico}
Let $\mu(d,m)$ be defined in \eqref{def mu}.
The following conclusions hold
\begin{itemize}
\item[(i)] if $\mu(d,m)<0$,
then there
exists a unique positive bounded weak 
solution 
to
Problem \eqref{problema non lineare}.
Moreover, $u \in C^{0,\gamma}(\overline{
\Omega})$, for some $\gamma \in (0,1/2)$;
\item[(ii)] if $\mu(d,m)\geq 0$, then the 
only
nonnegative weak solution to Problem
\eqref{problema non lineare} is the trivial 
one.
\end{itemize}
\end{theorem}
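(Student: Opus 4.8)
The plan is to read the whole dichotomy off the variational object $\mu(d,m)$ of \eqref{def mu}. For part (i) I would produce an ordered pair of bounded sub/super-solutions and invoke Proposition~\ref{propo sopra e sotto soluzioni} for existence, and an anisotropic Picone (Díaz--Saá) argument for uniqueness; part (ii) is a short energy comparison. \emph{Existence in (i).} A super-solution is the constant $\overline u\equiv M$, with $M$ as in \eqref{f ii}, since $-d\operatorname{div}(\Hcal(\nabla\overline u)\nabla_\xi\Hcal(\nabla\overline u))=0>f(x,M)$ and $\overline u\ge 0$ on $\Gamma_\Dcal$. A sub-solution is $\underline u=\eps\,\vfi$, where $\vfi\ge 0$, $\|\vfi\|_2=1$, is the minimiser of \eqref{def mu} furnished by Proposition~\ref{l'autovalore è minimo} (positive in $\Omega$ by the Harnack inequality, and bounded). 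Using the $1$-homogeneity \eqref{H pos omogenea}, which gives $\Hcal(\nabla(\eps\vfi))\nabla_\xi\Hcal(\nabla(\eps\vfi))=\eps\,\Hcal(\nabla\vfi)\nabla_\xi\Hcal(\nabla\vfi)$, and the weak formulation of \eqref{problema mu}, the sub-solution inequality for $\eps\vfi$ reduces to $f(x,\eps\vfi)\ge (m+\mu(d,m))\,\eps\vfi$ a.e.\ in $\Omega$; since $\mu(d,m)<0$ and $f(x,s)/s\to m(x)$ as $s\to 0^+$ (monotonically in $s$ by \eqref{f iii}, with the rate controlled through \eqref{f iv}), this holds for $\eps$ small enough, and shrinking $\eps$ further makes $\eps\vfi\le M$. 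Proposition~\ref{propo sopra e sotto soluzioni} then yields a weak solution $u$ with $\eps\vfi\le u\le M$, hence positive and bounded, and Theorem~\ref{regolarità} upgrades it to $u\in C^{0,\gamma}(\overline\Omega)$ for some $\gamma\in(0,1/2)$.

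\emph{Part (ii).} Let $u\ge 0$ be a weak solution with $\mu(d,m)\ge 0$. Once $u\in L^\infty(\Omega)$ is known (arguing as in \cite{DG'}, or by testing with $T_k(u)=\min\{u,k\}$ and letting $k\to\infty$), we test the equation with $u$: by \eqref{rmk coercività} in the form $f(x,s)s\le m(x)s^2$ we obtain $d\int_\Omega\Hcal^2(\nabla u)\le\int_\Omega m u^2$, so that $u\not\equiv 0$ would force $\mu(d,m)\le 0$; combined with $\mu(d,m)\ge 0$ this gives $\int_\Omega(mu^2-f(x,u)u)=0$ with a nonnegative integrand, and the strict inequality in \eqref{rmk coercività} on $\{u>0\}$ then yields $u\equiv 0$.

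\emph{Uniqueness in (i).} Let $u_1,u_2$ be positive bounded weak solutions (each satisfies $u_i\le M$, by testing with $(u_i-M)^+$ and using \eqref{f ii}). Following the Díaz--Saá/Brezis--Oswald scheme, I would test the equation for $u_1$ with $\psi_1=\frac{u_1^2-u_2^2}{u_1}$ and that for $u_2$ with $\psi_2=\frac{u_2^2-u_1^2}{u_2}$, and add. The right-hand side equals $\int_\Omega (u_1^2-u_2^2)\big(\tfrac{f(x,u_1)}{u_1}-\tfrac{f(x,u_2)}{u_2}\big)$, which by \eqref{f iii} is $\le 0$ and vanishes only where $u_1=u_2$; the left-hand side is $\ge 0$ by the anisotropic Picone inequality, i.e.\ the hidden convexity of $w\mapsto\int_\Omega\Hcal^2(\nabla\sqrt w)$ on $\{w\ge 0\}$, which follows from the strict convexity of $\Hcal^2$ recorded in \eqref{hessiano H}. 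Hence both sides vanish and $u_1\equiv u_2$.

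\emph{Main obstacle.} The substantive difficulty is making the uniqueness step rigorous: one has to establish the generalised Picone/Díaz--Saá inequality for $-\operatorname{div}(\Hcal\nabla_\xi\Hcal)$ and, above all, justify the test functions $\psi_i$ near $\Gamma_\Dcal$, where both solutions vanish and only $C^{0,\gamma}$ regularity with $\gamma<1/2$ is available. This requires working first with the regularised test functions $\frac{(u_1+\delta)^2-(u_2+\delta)^2}{u_1+\delta}\in H_\Dcal^1(\Omega)$ (and symmetrically), which vanish on $\Gamma_\Dcal$, and then passing to the limit $\delta\to 0^+$ while controlling the ratios $u_j/u_i$ near the boundary — the step where the mixed boundary conditions and the interface $\Gamma$ demand the most care. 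A secondary technical point is the uniform-in-$x$ control of $m(x)-f(x,s)/s$ as $s\to 0^+$ needed for $\eps\vfi$ to be a genuine sub-solution.
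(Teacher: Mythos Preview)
Your proposal is correct and follows essentially the same route as the paper: sub/super-solutions $(\eps\vfi,\,\text{const})$ plus Proposition~\ref{propo sopra e sotto soluzioni} for existence, an energy comparison against \eqref{rmk coercività} for (ii), and an anisotropic Picone/D\'iaz--Sa\'a argument for uniqueness. The only notable difference is the regularisation in the uniqueness step: the paper tests the equation for $v$ with $\dfrac{u^{2}}{v+\epsilon}\in H_\Dcal^1(\Omega)$ (and symmetrically) and invokes \cite[Lemma~2.2]{Jaros} for the Picone inequality, so that only the denominator is shifted and the passage $\epsilon\to0$ reduces to dominated convergence on the $f$-terms via $|f(x,v)|/(v+\epsilon)\le\|m\|_\infty$---this bypasses entirely the boundary-ratio control you identify as the main obstacle.
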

\begin{proof}
Let us first prove conclusion (i).
Let $\phi$ be the positive 
eigenfunction  associated with $\mu(d,m)$
normalized in $L^\infty$. Then, for all $\epsilon>0$ one 
has 
$v_\epsilon:=\epsilon\phi\equiv 0$ on $
\Gamma_\Dcal$, since $\phi \in 
H_\Dcal^1(\Omega)$.
Moreover, since $\mu(d,m)<0$,
by exploiting \eqref{limite f},
we can fix $\overline{\epsilon}>0$ such that
$$f(x,\epsilon\phi)\geq \epsilon\phi(m+
\mu(d,m)) \quad \text{for 
all }
0\leq \epsilon<\overline{\epsilon}.$$
Hence, for every $\epsilon\leq 
\overline{\epsilon}$, taking into account  \eqref{problema mu}
and  assumption \eqref{H pos omogenea}, we obtain
\begin{align*}
d\int_\Omega \Hcal(\nabla v_\epsilon)
\nabla_\xi \Hcal(\nabla v_\epsilon)\cdot 
\nabla w&=d\epsilon\int_\Omega 
\Hcal(\nabla \phi)\nabla_\xi \Hcal(\nabla 
\phi)\cdot \nabla w=\int_\Omega (m
+\mu(d,m))\epsilon\phi w \\
&\leq \int_\Omega 
f(x,v_\epsilon) w,
\end{align*}
for all $w \in 
C_\Dcal^\infty(\Omega)$, with $w\geq 0$. Namely,
 $v_\epsilon$ is a positive 
bounded weak sub-solution to Problem 
\eqref{problema non lineare}.
Let us consider the function $
\overline{u}:=M+1$, where $M$ is introduced in 
\eqref{f ii}. It is immediate to
observe that $\overline{u}$ 
is a positive bounded weak super-solution, 
and,
taking $\epsilon<\min\left\{ 
M+1, 
\overline{\epsilon} \right\}$, $\overline{u}
\geq 
\underline{u}$.
By applying 
Proposition \ref{propo sopra e sotto 
soluzioni} one obtains the existence of a 
weak solution 
$u \in H_\Dcal^1(\Omega)$ to Problem 
\eqref{problema non lineare}, 
with
$\underline{u}\leq u \leq \overline{u}$, so 
that
$u$ is positive and bounded.
Moreover, $u \in C^{0,
\gamma}(\overline{\Omega})$, for some $\gamma 
\in (0,1/2)$.
Let us now prove that $u$ is unique.
Let $u,v$ 
be positive, bounded, weak solutions to
Problem \eqref{problema non lineare}.
For every $\epsilon>0$, $\frac{u^2}{v+
\epsilon} \in H_\Dcal^1(\Omega)$,
and we can take it as test function in the 
equation 
solved by $v$. By applying \cite[Lemma 2.2]
{Jaros}, we obtain
\begin{align*}
0\geq d\int_\Omega \Hcal(\nabla v)
\nabla_\xi \Hcal(\nabla v)\cdot 
\nabla\left( \frac{u^2}{v+\epsilon}\right)-
d\int_\Omega \Hcal^2(\nabla u)=\int_\Omega 
f(x,v)\frac{u^2}{v+\epsilon}-\int_\Omega 
f(x,u)u.
\end{align*}
Then, taking $\frac{v^2}{u+\epsilon}$ as 
test funtion in the equation solved by $u$, 
and summing up, we get
\begin{align*}
0&\geq \int_\Omega f(x,v)\left[ 
\frac{u^2}{v+\epsilon}-v \right]
-\int_\Omega f(x,u)\left[ u-\frac{v^2}{u+
\epsilon} 
\right]\\
&=
\int_\Omega
\left\{
\frac{f(x,v)}{v+\epsilon}[
u^2-v(v+\epsilon)]-\frac{f(x,u)}{u+\epsilon}
[u(u+\epsilon)-v^2]\right\}.
\end{align*}
Taking into account \eqref{rmk coercività},
we can pass to the limit
as $\epsilon\rightarrow 0^+$ and
deduce 
\begin{align*}
0\geq \int_\Omega \left[ \frac{f(x,v)}{v}-
\frac{f(x,u)}{u} \right](u^2-
v^2)=&\int_{\Omega\cap \left\{ u\leq v 
\right\}} \left[ \frac{f(x,v)}{v}-
\frac{f(x,u)}{u} \right](u^2-v^2)\\
&+\int_{\Omega\cap \left\{ u > v 
\right\}} \left[ \frac{f(x,v)}{v}-
\frac{f(x,u)}{u} \right](u^2-v^2).
\end{align*}
Assumption \eqref{f iii} implies that both 
terms 
of the right-hand side are nonnegative,
yielding $u\equiv v$ a.e. in $\Omega$.\\
Let us now show conclusion (ii).
Let $u \in H_\Dcal^1(\Omega)$, be a nonnegative
weak solution to Problem \eqref{problema 
non lineare}, and suppose by contradiction that
$u\not\equiv 0$.  Let us fix $k> M$ and take as test function $v=T_{k}((u-M)^{+})$.
In view of \eqref{f ii} and \eqref{condizione di crescita H}, we obtain
\[
\underline{\alpha}\int_{\Omega}|\nabla T_{k}(u-M)^{+}|^{2}\leq \int_{\{x\in \Omega : u(x)>M\}}
f(x,u)T_{k}(u-M)\leq 0.
\]
Then, passing to the limit as $k\to+\infty$ we deduce that $u\leq M$ almost everywhere in $\Omega$.
Taking into account \eqref{rmk 
coercività} and taking $u$ as test function
, one has
$$0=d\int_\Omega \Hcal^2(\nabla u) -
\int_\Omega f(x,u)u > d\int_\Omega 
\Hcal^2(\nabla u)-\int_\Omega mu^2 \geq 
\mu(d,m)\int_\Omega u^2\geq 0,$$
yielding a contradiction.
\end{proof}
Theorem \ref{esistenza ellittico} gives us
a sufficient condition for existence and 
uniqueness of a positive stationary state
to Problem \eqref{problema parabolico}.
In the following we aim to prove
Theorem \ref{thm par}.
In order to do this,
let us introduce the functional setting. 
We consider the following
Hilbert triplet $$\left( 
H_\Dcal^1(\Omega) \ , \ L^2(\Omega) \ , \ 
(H_\Dcal^1(\Omega))^* \right),$$
where $(H_\Dcal^1(\Omega))^*$ is the dual 
space of $H_\Dcal^1(\Omega)$. 
We denote with $\langle\cdot , \cdot \rangle_*$
the dual product\\
$\langle \cdot , \cdot \rangle_{
(H_\Dcal^1(\Omega))^*,H_\Dcal^1(\Omega)}$,
and $\Omega_T=\Omega\times(0,T)$.
In order to prove the existence of a weak 
solution to
Problem \eqref{problema parabolico}
we will exploit sub- and super-solutions 
defined as
follows in the parabolic context (see e.g.
\cite{Di benedetto}). We will make use of the following problem
\begin{equation}\label{prob L}
\begin{cases}
\hskip68pt \mathcal{L}(v)=g(x,v)  \ &\text{in } 
\Omega_T,\\
\hskip63pt v(x,t)=0  &\text{on 
} \Gamma_\Dcal\times(0,T) ,\\
\Hcal(\nabla v)\nabla_\xi \Hcal(\nabla v)\cdot n=0
&\text{on } \Gamma_\Ncal\times(0,T),\\
\hskip61pt v(x,0)=v_0(x)
&\text{in }
\Omega,
\end{cases}
\end{equation}
where $\Lcal:L^2(0,T;H_\Dcal^1
(\Omega))\mapsto L^2(0,T;(H_\Dcal^1(\Omega))^*)
$ is defined as
\begin{equation}\label{L}
\Lcal(v):=\partial_t v- d \ \text{div}
(\Hcal(\nabla v)\nabla_\xi \Hcal(\nabla v))+\sigma v,
\end{equation}
$\sigma\geq 0$ will be properly chosen, and 
$g$ satisfies assumptions \eqref{f i} and 
\eqref{f iv}.
\begin{dfz}\label{def sotto sol 2}
A function $v \in L^2(0,T; H^1(\Omega))\cap 
C([0,T];L^2(\Omega))$ is a weak sub-solution 
to Problem \eqref{prob L}, if $
\partial_t v \in L^2(0,T;L^2(\Omega))$ and 
\begin{itemize}
\item[(i)]for every $\varphi \in 
H_\Dcal^1(\Omega)$, with $\varphi\geq 0$ 
, one has for a.e. $t \in (0,T)$
\begin{equation}\label{sotto sol para}
\begin{split}
\left( \partial_t v(t),
\varphi\right)_2 &+ d\left( 
\Hcal(\nabla v(t))\nabla_\xi \Hcal(\nabla 
v(t)),\nabla \varphi\right)_2 \\
&+\sigma \left( v(t),\varphi \right)_2
\leq \left( g(\cdot,v(t)),
\varphi\right)_2;
\end{split}
\end{equation}
\item[(ii)] $v(x,t)\leq 0$ on $
\Gamma_\Dcal\times 
(0,T)$;
\item[(iii)] $v(x,0)\leq v_0(x)$ a.e. in $
\Omega$.
\end{itemize}
The definition of super-solution is 
analogous, with the reverse inequalities.
\end{dfz}
A weak solution to 
Problem \eqref{problema parabolico} is a function that is both a weak sub-
and super-solution. More precisely, 
taking $\varphi \in H_\Dcal^1(\Omega)$, 
testing \eqref{sotto sol para} with $\varphi^+,
\varphi^- \in 
H_\Dcal^1(\Omega)$, and then subtracting, we 
obtain the following definition.
\begin{dfz}\label{def 2}
A function $v \in L^2(0,T; H_\Dcal^1(\Omega))
\cap 
C([0,T];L^2(\Omega))$ is a weak solution 
to Problem \eqref{prob L}, if $
\partial_t
v \in L^2(0,T;L^2(\Omega))$, $v(x,0)=v_0(x)$
a.e. in $\Omega$,
and
\begin{equation}\label{sol para}
\begin{split}
\left( \partial_t v(t),
\varphi\right)_2 &+ d\left( 
\Hcal(\nabla v(t))\nabla_\xi \Hcal(\nabla 
v(t)),\nabla \varphi\right)_2\\
&+\sigma\left( v(t),\varphi 
\right)_2
= \left( g(\cdot,v(t)),
\varphi\right)_2,
\end{split}
\end{equation}
for every $\varphi \in 
H_\Dcal^1(\Omega)$,
and for a.e. $t \in (0,T)$.
\end{dfz}
\begin{rmk}\label{def eq}
Thanks to \eqref{condizione di 
crescita H} and \eqref{(2.6) Montoro},
one can use classical arguments  to deduce that
Definition \ref{def 2} is equivalent to
require that $v$ satisfies 
\begin{equation}
\begin{split}
\int_0^T\!\! \left( \partial_t v(t),
\varphi(t) \right)_2dt &+ 
d\int_0^T\!\! \left( \Hcal(\nabla v(t))
\nabla_\xi \Hcal(\nabla v(t)),\nabla 
\varphi(t)\right)_2dt\\
&+\sigma \int_0^T \left( v(t),\varphi(t) 
\right)_2 = 
\int_0^T\!\! \left( g(\cdot,v(t)),\varphi(t)
\right)_2dt,
\end{split}
\end{equation}
for every $\varphi \in L^2(0,T; 
H_\Dcal^1(\Omega))$.
\end{rmk}
To prove existence and uniqueness of a weak solution 
to
Problem \eqref{problema parabolico}, we will 
exploit
comparison principles.
Let us first prove the following result.
\begin{lemma}\label{lemma unicità} 
Let $\underline{v}, \overline{v}
\in C([0,T]; L^2(\Omega))\cap 
L^2(0,T;H^1(\Omega))$ be respectively 
bounded 
weak sub- and super-solutions to 
Problem 
\eqref{prob L}, 
then for all $t \in [0,T]$ it results 
$$
\underline{v}(t) 
\leq \overline{v}(t) \ 
\ \text{a.e.  in} \ \Omega.$$
\end{lemma}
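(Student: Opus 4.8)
The plan is to prove Lemma \ref{lemma unicità} by the standard energy method for comparison of sub- and super-solutions to quasilinear parabolic equations, exploiting the monotonicity of the anisotropic operator. First I would set $w:=\underline v-\overline v$ and subtract the weak formulations \eqref{sotto sol para} for $\underline v$ and the reverse inequality for $\overline v$; this is legitimate because both functions lie in $L^2(0,T;H^1(\Omega))\cap C([0,T];L^2(\Omega))$ with $\partial_t$ in $L^2(\Omega_T)$, and because $w^+\in H^1_\Dcal(\Omega)$ for a.e.\ $t$: indeed $\underline v\le 0\le\overline v$ on $\Gamma_\Dcal$ forces $w\le 0$ there, hence $w^+$ has zero trace on $\Gamma_\Dcal$, so $\varphi=w^+(t)$ is an admissible test function. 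Testing the difference of the inequalities against $w^+$ gives, for a.e.\ $t$,
\[
\tfrac12\frac{d}{dt}\|w^+(t)\|_2^2
+d\int_\Omega\bigl(\Hcal(\nabla\underline v)\nabla_\xi\Hcal(\nabla\underline v)-\Hcal(\nabla\overline v)\nabla_\xi\Hcal(\nabla\overline v)\bigr)\cdot\nabla w^+
+\sigma\|w^+(t)\|_2^2
\le\int_\Omega\bigl(g(x,\underline v)-g(x,\overline v)\bigr)w^+.
\]

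Next I would handle the two nontrivial terms. For the diffusion term, note that $\nabla w^+=\nabla w$ on $\{w>0\}$ and $\nabla w^+=0$ elsewhere, so the integrand is $\bigl(\Hcal(\nabla\underline v)\nabla_\xi\Hcal(\nabla\underline v)-\Hcal(\nabla\overline v)\nabla_\xi\Hcal(\nabla\overline v)\bigr)\cdot(\nabla\underline v-\nabla\overline v)$ integrated over $\{w>0\}$; since $\Hcal(\xi)\nabla_\xi\Hcal(\xi)=\tfrac12\nabla_\xi(\Hcal^2)(\xi)$ and $\Hcal^2$ is strictly convex by \eqref{hessiano H}, the vector field $\xi\mapsto\tfrac12\nabla_\xi(\Hcal^2)(\xi)$ is monotone, so this term is $\ge 0$ and can be dropped. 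For the reaction term, I would use the growth bound \eqref{f iv} on $\partial_s g$ (note $\underline v,\overline v$ are bounded, say by $A$), giving $\bigl(g(x,\underline v)-g(x,\overline v)\bigr)\le L\,w$ on $\{w>0\}$, hence $\int_\Omega(g(x,\underline v)-g(x,\overline v))w^+\le L\|w^+(t)\|_2^2$. Choosing $\sigma\ge 0$ arbitrary (it only helps), we arrive at
\[
\frac{d}{dt}\|w^+(t)\|_2^2\le 2(L-\sigma)\|w^+(t)\|_2^2\le 2L\,\|w^+(t)\|_2^2\quad\text{for a.e.\ }t\in(0,T).
\]

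Finally I would invoke Gronwall's inequality together with the initial condition: from (iii) in Definition \ref{def sotto sol 2}, $\underline v(\cdot,0)\le v_0\le\overline v(\cdot,0)$, so $w^+(0)=0$ in $L^2(\Omega)$, and Gronwall forces $\|w^+(t)\|_2^2\le e^{2Lt}\|w^+(0)\|_2^2=0$ for all $t\in[0,T]$, i.e.\ $\underline v(t)\le\overline v(t)$ a.e.\ in $\Omega$, which is the claim. I expect the main obstacle — or at least the point needing the most care — to be the rigorous justification that $\varphi=w^+(t)$ is an admissible test function in the weak formulation (the trace argument on $\Gamma_\Dcal$ and the measurability/$L^2$-in-time regularity of $t\mapsto w^+(t)\in H^1_\Dcal(\Omega)$) and the identity $\tfrac12\frac{d}{dt}\|w^+(t)\|_2^2=(\partial_t w(t),w^+(t))_2$, which is the standard integration-by-parts formula for functions in $L^2(0,T;H^1_\Dcal(\Omega))$ with derivative in $L^2(0,T;L^2(\Omega))$ (applied to $w$); both are by now classical but should be cited, e.g.\ from the parabolic framework in \cite{Di benedetto}. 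The monotonicity of the anisotropic flux and the Lipschitz bound on the reaction are comparatively routine given \eqref{hessiano H} and \eqref{f iv}.
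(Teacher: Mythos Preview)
Your proposal is correct and follows essentially the same argument as the paper: test the difference of the inequalities with $(\underline v-\overline v)^+$, drop the anisotropic diffusion term by monotonicity of $\tfrac12\nabla_\xi(\Hcal^2)$ via \eqref{hessiano H}, control the reaction term by the Lipschitz bound \eqref{f iv}, and conclude with Gronwall and the ordering of the initial data. The only cosmetic difference is that the paper writes $g(x,\underline v)-g(x,\overline v)=(\underline v-\overline v)\int_0^1\partial_s g(\cdot,q(\underline v-\overline v)+\overline v)\,dq$ explicitly before invoking \eqref{f iv}, whereas you state the Lipschitz estimate directly; the content is identical.
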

\begin{proof}
First of all, observe that $a\leq 
\underline{v}$, $\overline{v}\leq 
b$ a.e. in $\Omega_T$, for some $a,b 
\in 
\mathbb{R}$, since $\underline{v},
\overline{v}$ are bounded.
Since $(\underline{v}-
\overline{v})^+ \in C([0,T]; 
L^2(\Omega))\cap 
L^2(0,T;H_\Dcal^1(\Omega))$, and $(
\underline{v}-\overline{v})^+(t)
\geq 0$, we can take it as test function in the equations satisfied by $\underline{v}$, $\overline{v}$. We obtain
\begin{align}\label{per avere ordinamento}
\begin{split}
 & \left( \partial_t (\underline{v}-
 \overline{v})(t),(\underline{v}-
 \overline{v})^+(t)
 \right)_2 
+\sigma \left( (\underline{v}-\overline{v})(t),
(\underline{v}-\overline{v})^+(t) 
\right)_2\\
& \;+ d \left( \Hcal(\nabla \underline{v}(t))
\nabla_\xi 
\Hcal(\nabla \underline{v}(t))-\Hcal(\nabla 
\overline{v}(t))
\nabla_\xi \Hcal(\nabla \overline{v}(t)) ,
\nabla(\underline{v}-\overline{v})^+
(t)\right)_2\\
 &\leq \left( g(\cdot,
\underline{v}(t))-g(\cdot,\overline{v}(t)),(
\underline{v}-\overline{v})^+(t) 
\right)_2, \quad \text{for a.e. }t 
\in (0,T).
\end{split}
\end{align}
Taking into account that $\sigma\geq 0$, that
\begin{align*}
\left( \partial_t (
\underline{v}-\overline{v})(t),(
\underline{v}-\overline{v})^+(t)
\right)_2=\frac{1}{2}\frac{d}{dt}
\| (\underline{v}-\overline{v})^+(t)\|_2^2,
\end{align*}
and that, thanks to \eqref{hessiano H}, one has
$$\left( \Hcal(\nabla 
\underline{v}(t))\nabla_\xi 
\Hcal(\nabla \underline{v}(t))-\Hcal(\nabla 
\overline{v}(t))
\nabla_\xi \Hcal(\nabla 
\overline{v}(t)) ,\nabla(\underline{v}-
\overline{v})^+
(t)\right)_2\geq 0,$$
inequality
\eqref{per avere ordinamento} yields
$$\frac{1}{2}\frac{d}{dt}\| (
\underline{v}-\overline{v})^+(t)\|_2^2 
\leq \left( g(\cdot,
\underline{v}(t))-g(\cdot,\overline{v}(t)),(
\underline{v}-\overline{v})^+(t) 
\right)_2.$$
On the other hand, in view of 
\eqref{f i} we can write
$$g(\cdot,\underline{v}(t))-g(\cdot,\overline{v}
(t))=\int_0^1 \frac{d}
{dq} g(\cdot,q(
\underline{v}-\overline{v})+\overline{v})dq=(
\underline{v}-\overline{v})\int_0^1 \partial_s 
g(\cdot, q(\underline{v}-\overline{v})+
\overline{v})dq.$$
Hence, setting 
$$r(\cdot,t):=\int_0^1 \partial_s g(\cdot, q(
\underline{v}-\overline{v})
+\overline{v})dq,$$
from assumption \eqref{f iv} it follows
\begin{equation}\label{per rmk}
\frac{1}{2}\frac{d}{dt}\| (
\underline{v}-\overline{v})^+(t) \|_2^2\leq 
\int_\Omega r(\cdot,t)[(
\underline{v}-\overline{v})^+(t)]^2 
dx \leq L \| (\underline{v}-
\overline{v})^+(t) \|_2^2.
\end{equation}
Recalling that $\underline{v}(x,0)-\overline{v}
(x,0)\leq v_0(x)-
v_0(x)=0$, and by applying Gronwall Lemma
we obtain the conclusion.
\end{proof}
We are now in a position to prove the following.
\begin{theorem}\label{teorema esistenza par }
Let $\underline{v}$ and $\overline{v}$ be 
respectively bounded weak sub- and super-
solutions to Problem \eqref{problema 
parabolico}.
Then, there exists $v \in 
L^2(0,T;H_\Dcal^1(\Omega))\cap
C([0,T];L^2(\Omega))$ unique weak 
solution to Problem \eqref{problema 
parabolico} such that $$\underline{v}\leq v 
\leq \overline{v} \ \ \text{a.e. in } \ 
\Omega_T.$$
\end{theorem}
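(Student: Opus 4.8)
The plan is to construct the solution by a monotone iteration (Picard-type) scheme between the given sub- and super-solution, exploiting the linear parabolic problem \eqref{prob L} as the iteration step, and then to invoke Lemma \ref{lemma unicità} for both the ordering of the iterates and the uniqueness of the limit. First I would choose the parameter $\sigma\geq 0$ in the definition \eqref{L} of $\Lcal$ large enough to make the reaction term monotone on the relevant range: since $\underline v,\overline v$ are bounded, say $a\leq \underline v\leq\overline v\leq b$ a.e. in $\Omega_T$, assumption \eqref{f iv} gives $|\partial_s f(x,s)|\leq L$ for $|s|\leq\max\{|a|,|b|\}$, so taking $\sigma\geq L$ makes $s\mapsto f(x,s)+\sigma s$ nondecreasing on $[a,b]$. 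Then I set $g(x,s):=f(x,s)+\sigma s$ and rewrite \eqref{problema parabolico} as \eqref{prob L} with this $g$; note $g$ still satisfies \eqref{f i} and \eqref{f iv}, as required for that problem. I would then define $v_0:=\underline v$ and, inductively, let $v_{n+1}$ be the unique weak solution of the \emph{linear} problem $\Lcal(v_{n+1})=g(x,v_n)$ with the mixed boundary data and initial datum $v_0(x)$; existence and uniqueness of $v_{n+1}$ in $L^2(0,T;H^1_\Dcal(\Omega))\cap C([0,T];L^2(\Omega))$ with $\partial_t v_{n+1}\in L^2(\Omega_T)$ is a standard application of Galerkin's method / the Lions--Lax--Milgram theory for parabolic equations, using the coercivity and boundedness of the bilinear form coming from \eqref{hessiano H}, \eqref{condizione di crescita H} and \eqref{(2.6) Montoro}, together with the Poincaré inequality of Proposition \ref{poincare}.

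Next I would establish the two-sided bound and monotonicity $\underline v\leq v_1\leq v_2\leq\cdots\leq\overline v$. The key observation is that $\underline v$ is a sub-solution of the linear problem solved by $v_1$ (because $\Lcal(\underline v)\leq g(\cdot,\underline v)$ by definition of sub-solution), and $\overline v$ is a super-solution of it; hence Lemma \ref{lemma unicità}, applied to the \emph{linear} equation (which is the case $g$ independent of the monotonicity issue, or rather with $r(\cdot,t)\equiv\sigma-\sigma=0$ after absorbing $\sigma$ — in fact here the comparison is cleaner since the "reaction'' is the fixed datum $g(\cdot,\underline v)$), yields $\underline v\leq v_1\leq\overline v$. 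Inductively, if $\underline v\leq v_{n-1}\leq v_n\leq\overline v$, then monotonicity of $g$ in its second argument on $[a,b]$ gives $g(\cdot,v_{n-1})\leq g(\cdot,v_n)$, so $v_n$ and $v_{n+1}$ solve linear problems with ordered right-hand sides and equal data, and Lemma \ref{lemma unicità} again gives $v_n\leq v_{n+1}\leq\overline v$ (comparing $v_{n+1}$ with $\overline v$ uses that $g(\cdot,v_n)\leq g(\cdot,\overline v)$, so $\overline v$ is a super-solution of the problem solved by $v_{n+1}$). Thus $\{v_n(x,t)\}$ is monotone and bounded, hence converges pointwise a.e.\ to some $v$ with $\underline v\leq v\leq\overline v$; by dominated convergence $v_n\to v$ in $L^2(\Omega_T)$ and $g(\cdot,v_n)\to g(\cdot,v)$ in $L^2(\Omega_T)$.

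To pass to the limit in the weak formulation I would first derive uniform a priori bounds: testing the equation for $v_{n+1}$ with $v_{n+1}$ and with $\partial_t v_{n+1}$, and using \eqref{condizione di crescita H}, \eqref{(2.6) Montoro}, the uniform $L^\infty$ bound just obtained, and the Poincaré inequality, gives $\{v_n\}$ bounded in $L^2(0,T;H^1_\Dcal(\Omega))$ and $\{\partial_t v_n\}$ bounded in $L^2(\Omega_T)$. Therefore, up to a subsequence, $v_n\rightharpoonup v$ in $L^2(0,T;H^1_\Dcal(\Omega))$ and $\partial_t v_n\rightharpoonup\partial_t v$ in $L^2(\Omega_T)$; since $H^1_\Dcal(\Omega)$ is a closed subspace of $H^1(\Omega)$, the limit lies in $L^2(0,T;H^1_\Dcal(\Omega))$, and the Aubin--Lions lemma upgrades the convergence to strong in $L^2(\Omega_T)$ (consistent with the pointwise limit). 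The only delicate point is the nonlinear elliptic term $\Hcal(\nabla v_n)\nabla_\xi\Hcal(\nabla v_n)$: here I would use the strict monotonicity coming from \eqref{hessiano H} (the vector field $\xi\mapsto\Hcal(\xi)\nabla_\xi\Hcal(\xi)=\tfrac12\nabla_\xi(\Hcal^2)(\xi)$ is monotone since $\Hcal^2$ is convex) via a Minty--Browder argument: from the energy estimate one extracts $\limsup_n\int_{\Omega_T}\Hcal(\nabla v_n)\nabla_\xi\Hcal(\nabla v_n)\cdot\nabla v_n\leq\int_{\Omega_T}\chi\cdot\nabla v$ where $\chi$ is the weak limit of the fluxes, and monotonicity then identifies $\chi=\Hcal(\nabla v)\nabla_\xi\Hcal(\nabla v)$. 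This is the main obstacle, as it is the one step that is not a soft compactness argument; everything else is routine.

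Finally, the limit $v$ satisfies Definition \ref{def 2} for Problem \eqref{prob L} with $g=f+\sigma s$, i.e.\ $v$ is a weak solution of \eqref{problema parabolico} with $\underline v\leq v\leq\overline v$ a.e.\ in $\Omega_T$, and $v\in C([0,T];L^2(\Omega))$ follows from $v\in L^2(0,T;H^1_\Dcal(\Omega))$ with $\partial_t v\in L^2(0,T;(H^1_\Dcal(\Omega))^*)$ by the standard Lions--Magenes embedding. Uniqueness is immediate: if $v,\tilde v$ are two weak solutions lying between $\underline v$ and $\overline v$ (in particular bounded), then each is simultaneously a sub- and a super-solution of \eqref{prob L}, so Lemma \ref{lemma unicità} gives $v\leq\tilde v$ and $\tilde v\leq v$ a.e.\ in $\Omega_T$, whence $v=\tilde v$. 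This completes the proof.
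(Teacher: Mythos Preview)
Your proposal is correct and follows essentially the same monotone iteration scheme as the paper: shift $f$ by $\sigma s$ to make the reaction nondecreasing on $[a,b]$, solve the resulting linear problems via Lions' theory, use Lemma \ref{lemma unicità} to propagate the ordering $\underline v\le v_1\le v_2\le\cdots\le\overline v$, extract uniform bounds, pass to the limit, and conclude uniqueness again from Lemma \ref{lemma unicità}. (A minor quibble: your iterate called $v_0$ clashes with the initial datum $v_0(x)$; the paper starts at $v_1:=\underline v$.)

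The one substantive difference is how you identify the limit of the nonlinear diffusion term. The paper does not use a Minty--Browder argument: instead it tests the difference of the equations for $v_k$ and $v_l$ with $v_k-v_l$ and uses the \emph{uniform} convexity bound \eqref{hessiano H} to deduce that $\{\nabla v_k\}$ is Cauchy in $L^2(\Omega_T)$, hence $\nabla v_k\to\nabla v$ strongly; the flux then converges by dominated convergence. This is shorter here because the right-hand sides $F(\cdot,v_{k-1})$ already converge strongly, so the Cauchy estimate closes immediately, and it yields the extra information of strong gradient convergence. Your Minty approach is also valid and would be the natural choice if \eqref{hessiano H} were weakened to mere monotonicity of $\xi\mapsto\tfrac12\nabla_\xi(\Hcal^2)(\xi)$, but it requires a little more care in establishing the $\limsup$ inequality (in particular controlling $\|v_n(T)\|_2$, which here follows from the monotonicity of the iterates in $C([0,T];L^2(\Omega))$).
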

\begin{proof}
We adapt to our case
a classical iteration scheme by
\cite{Sattinger}. 
Since $\underline{v}$ and $\overline{v}$ are 
bounded, by applying Lemma \ref{lemma unicità} we have
$a\leq \underline{v}\leq 
\overline{v}\leq b$ a.e. in $\Omega_T$, for some
$a,b \in \mathbb{R}$. We 
set $F(x,s):=f(x,s)+Ls$, where 
$L$ is given in \eqref{f iv}, so that 
$$\partial_s F(x,s)=\partial_s f(x,s)+L\geq 
-L+L=0,$$
and we denote by $v_1:=\underline{v}$. 
From assumption \eqref{f iv} it follows that
$F(x,v_1)=
F(x,\underline{v}) \in L^2(0,T;L^2(\Omega))$, 
and 
knowing that
$L^2(\Omega)$ is continuously embedded in $
(H_\Dcal^1(\Omega))^*$, one can apply 
\cite[Theorem 1.2, Chap. 2]{Lions}
to 
find a 
unique weak solution $v_2 \in 
L^2(0,T; H_\Dcal^1(\Omega))$ to problem
\begin{equation}\label{prob L 1}
\begin{cases}
\hskip73pt\Lcal(v)=F(x,v_1) \ 
&\text{in } \Omega_T,\\
\hskip89pt v=0  &\text{on} \ 
\Gamma_\Dcal\times (0,T),\\ 
\hskip4pt \Hcal(\nabla v)\nabla_\xi 
\Hcal(\nabla v)\cdot n=0 &\text{on } 
\Gamma_\Ncal \times (0,T),\\
\hskip66pt v(x,0)=v_0(x) &\text{in} \ 
\Omega,
\end{cases}
\end{equation}
where $\Lcal$ is defined in \eqref{L}, with
$\sigma=L$,
and 
$v_2 \in C([0,T]; L^2(\Omega))$.
Moreover, conditions \eqref{condizione di 
crescita H}, \eqref{hessiano H} and
\eqref{(2.5) Montoro} allow us to apply
\cite[Proposition 4.2, Chap. III]{Showalter} and obtain that
$\partial_t 
v_2 \in L^2(0,T; L^2(\Omega))$.
In particular, this implies that $v_1$ and $v_2$
are respectively weak sub- and super-solutions to
problem \eqref{prob L 1},
with
$g(x,v)=g(x)=F(x,v_1(x))$, and Lemma
\ref{lemma unicità} yields
$v_1\leq v_2$ a.e.
in $\Omega_T$.
In addition, the monotonicity 
property of $F$ implies that $\Lcal(\overline{v})
\geq 
F(x,\overline{v})\geq \Lcal(v_2)$,
so that
$v_2$ and $\overline{v}$ are respectively weak 
sub- and super-solutions to Problem \eqref{prob L 1},
thus,
by applying again Lemma \ref{lemma unicità} we 
have
$$a\leq v_1\leq v_2\leq \overline{v}\leq b 
\ \ \text{ a.e. in}\ \ \Omega_T.$$
Now, we proceed by induction. Assume $k>1$ 
and  $v_{k-1} \in C([0,T]; 
L^2(\Omega))\cap L^2(0,T;H_\Dcal^1(\Omega))$, 
with $\partial_t v_{k-1}\in L^2(0,T; 
L^2(\Omega))$, such that 
\begin{equation}\label{eq:vklim}
a\leq v_1\leq v_2\leq \cdots \leq v_{k-2}
\leq v_{k-1}\leq \overline{v}\leq b \ \ 
\text{a.e. in}\ \ \Omega_T.
\end{equation}
By applying 
\cite[Theorem 1.2 Chap. 2]{Lions} and \cite[Proposition 4.2, Chap.
III]{Showalter}
again, we obtain the existence of 
$v_k \in C([0,T]; L^2(\Omega))\cap 
L^2(0,T;H_\Dcal^1(\Omega))$, with $\partial_t v_k\in 
L^2(0,T;L^2(\Omega))$,
 unique weak solution to problem
\begin{equation}\label{prob k}
\begin{cases}
\hskip73pt \Lcal(v)=F(x,v_{k-1}) \ 
&\text{in } 
\Omega_T,\\
\hskip89pt v=0 &\text{on } 
\Gamma_\Dcal\times 
(0,T),\\ 
\hskip4pt \Hcal(\nabla v)\nabla_\xi \Hcal(\nabla v)
\cdot n=0 &\text{on } \Gamma_\Ncal \times 
(0,T),\\
\hskip66pt v(x,0)=v_0(x) &\text{in } 
\Omega.
\end{cases}
\end{equation}
Moreover, $\Lcal(v_{k-1})=F(x,v_{k-2})\leq F(x,v_{
k-1})=\Lcal(v_k)$,
then, arguing as above, we have constructed
a sequence
$\left\{ v_k \right\}\in 
L^2(0,T;H_\Dcal^1(\Omega))$, with 
$\partial_t v_k\in L^2(\Omega_T)$, satisfying
\eqref{eq:vklim} and \eqref{prob k}.
Let us prove now that $\left\{ v_k \right\}$ 
is uniformly bounded in 
$L^2(0,T;H_\Dcal^1(\Omega))$. It results
\begin{align*}
\left(\partial_t v_k(t),v_k(t) 
\right)_2+ & d \left( 
\Hcal(\nabla v_k(t))\nabla_\xi \Hcal(\nabla 
v_k(t))), \nabla v_k(t) \right)_2
\\
&+L\| v_k(t)\|_2^2=\left( 
F(\cdot,v_{k-1}),v_k(t) \right)_2
\ \ \text{for a.e. } \ t \in (0,T).
\end{align*}
Integrating in $(0,T)$, and recalling 
assumptions \eqref{condizione di crescita H}, \eqref{(2.5) Montoro}, and Proposition
\ref{poincare}, we obtain
\begin{small}
\begin{equation*}\label{per lim di u_k}
\int_0^T\!\!\left(\partial_t v_k(t),v_k(t) 
\right)_2dt + d 
\underline{\alpha}^2  \int_0^T\!\! \| 
v_k(t) 
\|
^2dt +L\int_0^T\!\! \| v_k(t) \|_2^2 dt \leq 
\int_0^T\!\! \left( F(\cdot,v_{k-1}),v_k(t) 
\right)_2 dt.
\end{equation*}
\end{small}
Since
\begin{small}
$$\int_0^T\!\!\left(\partial_t v_k(t),v_k(t) 
\right)_2dt=\frac{1}{2}\| v_k(T) 
\|_2^2-\frac{1}{2}\| v_0 \|_2^2,
\quad
\int_0^T\!\! \left( F(\cdot,v_{k-1}),v_k(t) 
\right)_2 dt \leq b \int_0^T\!\! 
\int_\Omega F(x,b)dx dt,
$$
\end{small}
one deduces
$$\frac{1}{2}\| v_k(T) \|_2^2+d 
\underline{\alpha}^2 \int_0^T\!\! \| v_k(t)\|
^2 dt+L\int_0^T\!\! \| v_k(t)\|_2^2 dt\leq b 
\int_{\Omega_T}F(x,b)dxdt+\frac{1}{2}\|  
v_0\|_2^2,$$
so that, in particular
$$\int_0^T\!\! \| v_k(t)\|^2 dt\leq \frac{1}
{d \underline{\alpha}^2} 
\left\{b\int_{\Omega_T}F(x,b)dx dt+\frac{1}
{2}\| v_0 \|_2^2\right\}.$$
implying that
$\left\{ v_k \right\}_{k\geq 1}$ is uniformly 
bounded in $L^2(0,T;H_\Dcal^1(\Omega))$.
Then, there exists 
$v \in L^2(0,T;H_\Dcal^1(\Omega))$ such that, up to a subsequence, 
$v_k\rightharpoonup v$ weakly in $L^2(0,T;H_\Dcal^1(\Omega))$, 
$v_k \rightarrow v$ strongly in $L^2(\Omega_T)$ and 
a.e. in $\Omega_T$.
Let us  prove that $\nabla v_k 
\rightarrow \nabla v$ in 
$L^2(\Omega_T)$. 
Testing \eqref{prob k} with $v_k-v_l \in 
L^2(0,T;H_\Dcal^1(\Omega))$ and denoting 
$$\Acal_{k,l}(t):=\Hcal(\nabla v_k(t))\nabla_\xi 
\Hcal(\nabla v_k(t))-\Hcal(\nabla v_l(t))
\nabla_\xi \Hcal(\nabla v_l(t)),$$ 
we have
\begin{align}\label{per conv gradienti}
\begin{split}
\frac{1}{2}\| v_k(T)-v_l(T)\|_2^2 +&L\int_0^T  \|v_k(t)-
v_l(t)\|_2^2  dt\\
+& d \int_0^T\!\! \left(\Acal_{k,l}(t),
 \nabla (v_k(t)-v_l(t))\right)_{L^2}  dt\\
&= \int_0^T\!\! \left(F(x,v_{k-1}(t))-
F(x,v_{l-1}(t)),v_{k-1}(t)-v_{l-1}(t)\right)_{L^2} dt.
\end{split}
\end{align}
As $v_k$ converges in $L^2(\Omega_T)$,
assumption \eqref{f iv} allows us
to
apply Lebesgue Theorem on the right-hand side
and obtain
\begin{equation*}
\lim_{k,l \rightarrow +\infty}\int_0^T\!\! 
\int_\Omega [F(x,v_{k-1}(t))-F(x,v_{l-1}(t))]
(v_k(t)-v_l(t))dx dt=0.
\end{equation*}
Then, assumption \eqref{hessiano H} implies
$$0\leq \lim_{k,l \rightarrow +\infty} 
\Xi \int_0^T\!\! \int_\Omega |\nabla v_k(t)-
\nabla v_l(t)|^2 dx dt \leq \lim_{k,l 
\rightarrow +\infty} \int_0^T \left(
\Acal_{k,l}(t), \nabla (v_k(t)-v_l(t))\right) 
dt=0,$$
so that
$\nabla v_k\rightarrow \nabla v
\ \text{in} \ L^2(\Omega_T)$, and
$\nabla v_k(t)\rightarrow \nabla v(t)$
a.e. in $\Omega$ (up to a sub-sequence).
Recalling that $\Hcal \in C^2(\mathbb{R}^N
\setminus\left\{ 0 \right\})$, we have
$$\Hcal(\nabla v_k(t))\nabla_\xi
\Hcal(\nabla v_k(t))\rightarrow
\Hcal(\nabla v(t))\nabla_\xi
\Hcal(\nabla v(t)) \quad \text{a.e. in } 
\Omega.$$
Moreover, in view of  assumptions \eqref{condizione di 
crescita H}, \eqref{(2.5) Montoro},  we can apply  Lebesgue Theorem to have,
\begin{equation}\label{conv parte H}
\left( 
\Hcal(\nabla v_k(t))\nabla_\xi
\Hcal(\nabla v_k(t)),\nabla
\varphi \right)_2
\rightarrow
\left( 
\Hcal(\nabla v(t))\nabla_\xi
\Hcal(\nabla v(t)),\nabla
\varphi \right)_2,
\end{equation}
for all $\varphi \in H_\Dcal^1(\Omega)
$ and 
for a.e. $t \in (0,T)$.
Let us prove that $\left\{ 
\partial_t v_k \right\}_{k \geq 1}$ is 
uniformly bounded in $L^2(0,T;
(H_\Dcal^1(\Omega))^*)$.\\
For all $w \in 
H_\Dcal^1(\Omega)$ and a.e. $t \in (0,T)$, one 
has
$$(\partial_t v_k(t),w)_2=-d 
\left( \Hcal (\nabla v_k(t))\nabla_\xi 
\Hcal(\nabla v_k(t)), \nabla w 
\right)_2+\left( 
F(\cdot,v_{k-1}),w \right)_2.$$
Recalling that $\left\{ \|\nabla v_k(t) \|_2 
\right\}$ is bounded by a constant,
we obtain, for 
all $w \in H_\Dcal^1(\Omega)$ with $\| w \|=1$,
\begin{align*}
\left| (\partial v_k(t),w)_2 
\right| &\leq d \overline{\alpha}Q 
\int_\Omega |\nabla v_k(t)||\nabla w| dx + \| 
F(\cdot,v_{k-1})\|_2 \| w \|_2 \\
& \leq \frac{d \overline{\alpha}Q}{2}\| 
\nabla v_k(t) \|_2 + \|
F(\cdot,b) \|_2 
\leq \overline{C}.
\end{align*}
As a consequence, $\left\{\partial_t v_k 
\right\}$ is 
uniformly bounded in $L^2(0,T;
(H_\Dcal^1(\Omega))^*)$. Hence,
there exists $z$ in $L^2(0,T;
(H_\Dcal^1(\Omega))^*)$ such that,
up to a sub-sequence, one has $\partial_t v_k 
\rightharpoonup z$. It results $z=\partial_t 
v$. Indeed,
for every
$\varphi \in C_0^\infty(0,T)$ and for every
$w \in H_\Dcal^1(\Omega)$ we have
\[
\begin{split}
\int_0^T\!\! \langle z,w 
\rangle_* 
\varphi(t)dt &
=\lim_{k\to+\infty}\int_0^T\!\! \langle 
\partial_t v_k(t),w 
\rangle_* \varphi(t)dt=-\lim_{k\to+\infty}
\int_0^T\!\! 
(v_k(t),w)_2\partial_t 
\varphi(t)
\\&
=-\int_0^T\!\! 
(v(t),w)_2\partial_t 
\varphi(t)
=\int_0^T\!\! 
\langle \partial_t v(t),w 
\rangle_* \varphi(t),
\end{split}
\]
yielding that
$z=\partial_t v$.
Then, passing to the limit as $k 
\rightarrow +\infty$ in 
$$\left( \partial_t v_k, w 
\right)_2+d 
\left( \Hcal (\nabla v_k(t))\nabla_\xi 
\Hcal (\nabla v_k(t)), \nabla w 
\right)_2+L\left( v_k(t),w 
\right)_2=\left( 
F(\cdot,v_{k-1}),w \right)_2$$
we deduce
that $v$ satisfies \eqref{sol para}.\\
Since $v \in L^2(0,T;H_\Dcal^1
(\Omega))$, and $\partial_t v \in L^2(0,T;
(H_\Dcal^1
(\Omega)^*)$,  $v\in C([0,T]; L^2(\Omega))$; 
finally 
applying again \cite[Proposition 4.2, Chap.
III]{Showalter} we obtain $\partial_t v \in 
L^2(\Omega_T)$.
Uniqueness follows from Lemma \ref{lemma 
unicità}. Indeed, since $v_1,v_2$ are both
weak sub- and super-solutions
to Problem \eqref{problema parabolico}, by 
applying 
Lemma \ref{lemma 
unicità}, with $\sigma=0$ and $g(x,v)=f(x,v)$, 
we 
obtain that,
for every $t \in [0,T]$ it results
$v_1(t)\leq v_2(t)$ and $v_2(t)\leq v_1(t)$ a.e. 
in 
$\Omega$, so that
$v_1(t)\equiv v_2(t)$ a.e. in $
\Omega$.
\end{proof}
In view of Theorem 
\ref{teorema esistenza par }, in order to show 
the existence of a solution to Problem
\eqref{problema parabolico}, we just need
to find a pair of ordered bounded sub- and
super-solutions, as we will see in the proof
of the following result.
\begin{theorem}\label{es par mu}
There exists $v(x,t)\in L^2(0,T;
H_\Dcal^1(\Omega))\cap C([0,T];L^2(\Omega))$
unique nonnegative weak solution to Problem
\eqref{problema parabolico}. Moreover, the
following statements hold.
\begin{itemize}
\item[$(i)$]
If $\mu(d,m)<0$, then $v$ is positive, and
\begin{equation}\label{conver par}
\lim_{t \rightarrow +\infty}
v(x,t)=u(x),
\end{equation}
where $u$ is the positive solution found in
Theorem \ref{esistenza ellittico}.
\item[$(ii)$]
If $\mu(d,m)\geq 0$, then
\begin{equation}\label{conv par 0}
\lim_{t \rightarrow + \infty }v(x,t)=0.
\end{equation}
\end{itemize} 
\end{theorem}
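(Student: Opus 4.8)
The plan is to obtain existence and uniqueness of the nonnegative solution from the comparison and monotone-iteration machinery already set up (Theorem~\ref{teorema esistenza par } together with the comparison principle of Lemma~\ref{lemma unicità}), and then to study the large-time behaviour by trapping $v$ between two monotone-in-time trajectories whose limits are identified as steady states through Theorem~\ref{esistenza ellittico}. For the first step, $\underline v\equiv0$ is a stationary weak sub-solution to \eqref{problema parabolico}, because $f(x,0)=0$ by \eqref{f i}, while the constant $\overline v\equiv C:=\max\{\|v_0\|_\infty,M\}$ is a weak super-solution: it is a constant, $\overline v\ge v_0$, $\overline v\ge0$ on $\Gamma_\Dcal$, and $f(x,C)<0$ a.e.\ in $\Omega$ by \eqref{f ii}. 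Hence Theorem~\ref{teorema esistenza par } gives a unique weak solution $v$ with $0\le v\le C$ in $\Omega_T$, which by uniqueness extends consistently to a global solution on $\Omega\times(0,+\infty)$ with the same bounds; since $v_0$ is positive, a standard strong maximum principle argument shows $v(\cdot,t)>0$ in $\Omega$ for every $t>0$.

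Next I would let $\overline v$ be the solution with datum $C$. Comparing it with the stationary super-solution $C$, and then the time-shifted solution $\overline v(\cdot,\cdot+h)$ with $\overline v$, via Lemma~\ref{lemma unicità}, shows that $t\mapsto\overline v(\cdot,t)$ is non-increasing, so $\overline v(\cdot,t)\downarrow\overline u$ a.e.\ and in $L^2$ as $t\to+\infty$. When $\mu(d,m)<0$, letting $\phi>0$ attain $\mu(d,m)$, the function $\epsilon\phi$ is a positive stationary weak sub-solution of \eqref{problema non lineare} for all small $\epsilon>0$ (exactly as in the proof of Theorem~\ref{esistenza ellittico}(i)), so the solution $\underline v$ with datum $\epsilon\phi$ satisfies $\epsilon\phi\le\underline v(\cdot,t)$, is non-decreasing in $t$, and increases a.e.\ and in $L^2$ to some $\underline u\ge\epsilon\phi>0$. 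In both cases the limit is identified with a steady state using that $\Ecal$ is a Lyapunov functional for the flow, $\tfrac12\tfrac{d}{dt}\Ecal(v(t))$ being replaced by $\tfrac{d}{dt}\Ecal(v(t))=-\|\partial_t v(t)\|_2^2$, and that $\Ecal$ is bounded below on $\{0\le w\le C\}$ (there $\int_\Omega\Fcal(x,w)$ is bounded, by \eqref{f i} and \eqref{f iv}); hence $\partial_t v\in L^2(0,+\infty;L^2(\Omega))$, so along a suitable sequence $t_n\to+\infty$ one has $\|\partial_t v(t_n)\|_2\to0$ and, testing the equation at time $t_n$ with $v(t_n)$, a uniform $H^1_\Dcal$-bound; passing to the limit in the weak formulation — upgrading the weak convergence of $\nabla v(t_n)$ to strong convergence by means of \eqref{hessiano H}, exactly as in the gradient-convergence step of the proof of Theorem~\ref{teorema esistenza par } — shows that $\overline u$ (and, when $\mu(d,m)<0$, also $\underline u$) is a nonnegative weak solution of \eqref{problema non lineare}.

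Now I would conclude the two cases. If $\mu(d,m)\ge0$, Theorem~\ref{esistenza ellittico}(ii) forces $\overline u=0$, and since $v_0\le C$ gives $v(\cdot,t)\le\overline v(\cdot,t)$ by Lemma~\ref{lemma unicità}, we get $v(\cdot,t)\to0$ a.e.\ and in $L^2$, i.e.\ \eqref{conv par 0}. (When $\mu(d,m)>0$ this also follows directly with an exponential rate: testing \eqref{problema parabolico} with $v$ and using \eqref{rmk coercività} gives $\tfrac12\tfrac{d}{dt}\|v(t)\|_2^2\le-\bigl(d\int_\Omega\Hcal^2(\nabla v)-\int_\Omega m v^2\bigr)\le-\mu(d,m)\,\|v(t)\|_2^2$.) If $\mu(d,m)<0$, then $\underline u\not\equiv0$ forces $\underline u=u$ by Theorem~\ref{esistenza ellittico}(i); choosing also $\epsilon$ so small that $\epsilon\phi\le C$, Lemma~\ref{lemma unicità} yields $\epsilon\phi\le\underline v(\cdot,t)\le\overline v(\cdot,t)$, whence $\overline u\ge\epsilon\phi>0$ and therefore $\overline u=u$ as well. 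For the given datum $v_0\le C$ we then obtain $v(\cdot,t)\le\overline v(\cdot,t)\to u$, so $\limsup_{t\to+\infty}v(\cdot,t)\le u$ a.e.; and, since $v(\cdot,t_0)\ge\epsilon'\phi$ for some $t_0>0$ and some small $\epsilon'>0$, comparing $v$ with the solution started from $\epsilon'\phi$ at time $t_0$ gives $\liminf_{t\to+\infty}v(\cdot,t)\ge u$ a.e. Hence $v(\cdot,t)\to u$, which is \eqref{conver par}.

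The step I expect to be the main obstacle is the pointwise lower bound $v(\cdot,t_0)\ge\epsilon'\phi$ near $\Gamma_\Dcal$ used in the last case: since weak solutions are only of class $C^{0,\gamma}(\overline\Omega)$ with $\gamma<1/2$ near the interface $\Gamma$, one cannot simply invoke a boundary Harnack inequality or a Hopf-type lemma, and a barrier adapted both to the mixed boundary conditions and to the anisotropic operator has to be constructed instead. The remaining analytic point — the passage to the limit in the nonlinear principal part along the monotone trajectories $\overline v$ and $\underline v$ — is routine, being identical to the gradient-convergence argument already carried out in the proof of Theorem~\ref{teorema esistenza par }.
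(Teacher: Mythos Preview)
Your sandwich-between-monotone-trajectories strategy matches the paper's. Two differences are worth noting.

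For identifying the limits $\overline u,\underline u$ as steady states, the paper does not use $\Ecal$ as a Lyapunov functional. It exploits the monotonicity directly: setting $v_k(x,t):=\underline v(x,t+k)$ on $\Omega\times(0,1)$, the monotone bounded sequence $\{v_k\}$ converges in $L^2(\Omega\times(0,1))$, and one passes to the limit in the weak formulation of \eqref{problema parabolico} by repeating verbatim the compactness argument of Theorem~\ref{teorema esistenza par } (uniform $L^2(0,1;H^1_\Dcal)$ bounds, strong gradient convergence via \eqref{hessiano H}, and $\partial_t v_k\rightharpoonup\partial_t w_1\equiv0$). This is lighter than your energy route, since it avoids having to justify the identity $\tfrac{d}{dt}\Ecal(v(t))=-\|\partial_t v(t)\|_2^2$ at the available regularity.

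More to the point, the paper completely sidesteps the obstacle you single out. When $\mu(d,m)<0$, with $\phi$ normalised so that $\|\phi\|_\infty=1$, the paper simply chooses $\epsilon\le\min_\Omega v_0$; then $\epsilon\phi\le\epsilon\le v_0$ already at $t=0$, so $\epsilon\phi$ is a stationary sub-solution lying below the initial datum, and Lemma~\ref{lemma unicità} gives $v(\cdot,t)\ge\underline v(\cdot,t)\ge\epsilon\phi>0$ for all $t$ with no barrier, boundary Harnack, or Hopf lemma required (this also supplies the positivity of $v$ that you attributed to a ``standard strong maximum principle''). One may reasonably object that $\min_\Omega v_0>0$ is in tension with $v_0\in H^1_\Dcal(\Omega)$; the paper takes it for granted, so if you want to cover initial data that genuinely vanish on $\Gamma_\Dcal$, your barrier construction would indeed be needed.
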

\begin{proof}
Let us first suppose that $\mu(d,m)<0$.
Let $\phi$ be the positive eigenfunction 
associated with $\mu(d,m)$ defined in
\eqref{def mu}, normalized in $L^\infty$. We take 
$\epsilon\leq 
\epsilon_1:=\min_{\Omega}v_0$,
then, setting $v_\epsilon(x,t)=v_\epsilon(x):=
\epsilon \phi(x)$, one has 
$v_\epsilon(x,0)\leq v_0(x)$. Moreover,
$v_\epsilon\equiv 0$ on $
\Gamma_\Dcal$, since $\phi \in 
H_\Dcal^1(\Omega)$. 
In addition, since $\mu(d,m)<0$,
thanks to \eqref{limite f},
there exists $\epsilon_2>0$ such that 
$$f(x,u)\geq (m+\mu(d,m))u \ \ \text{for 
all} \ 0\leq u < \epsilon_2.$$
Hence, taking $\epsilon\leq \epsilon_2$ we 
have
\begin{align*}
d\int_\Omega \Hcal(\nabla v_\epsilon)
\nabla_\xi \Hcal(\nabla v_\epsilon)\cdot 
\nabla \varphi=d\epsilon\int_\Omega 
\Hcal(\nabla \phi)\nabla_\xi \Hcal(\nabla 
\phi)\cdot \nabla \varphi &=\int_\Omega (m
+\mu(d,m))\epsilon\phi\varphi \\
&\leq \int_\Omega 
f(x,v_\epsilon)\varphi,
\end{align*}
for all $\varphi \in 
H_\Dcal^1(\Omega)$.
Then,
for every
$\epsilon<\min\left\{ \epsilon_1,
\epsilon_2
\right\}$, $\underline{v}:=v_\epsilon$ is a 
time
independent weak sub-solution to Problem 
\eqref{problema parabolico},
while $\overline{v}=\max\left\{ 
M+1,\|v_0 \|_\infty \right\}$ is a 
time independent weak super-solution. Thus, by
applying Theorem \ref{teorema esistenza par }
we have proved that $v$ exists and it is 
positive. 
If $\mu(d,m)\geq 0$,
Theorem
\ref{esistenza ellittico} yields that
the only nonnegative stationary state to
Problem \eqref{problema parabolico} is 0, 
thus, taking $\underline{v}\equiv 0$ and again
$\overline{v}=\max\left\{ 
M+1,\|v_0 \|_\infty \right\}$ in Theorem 
\ref{teorema esistenza par }, we get the 
existence of $v$ nonnegative.
Let us prove now
\eqref{conver par} and \eqref{conv par 0}.
Let $\underline{v}$ be the time-independent 
weak sub-solution to Problem
\eqref{problema parabolico} given by
$0$ if $\mu(d,m)\geq 0$ and by $\epsilon\phi$
if $\mu(d,m)<0$.
Let $v_1,v_2$ be the positive solutions to
Problem \eqref{problema parabolico}
with initial datum $\underline{v}$
and $\overline{v}$ respectively.
Since $\underline{v}\leq v_0\leq 
\overline{v}$, 
exploiting Lemma \ref{lemma unicità} and
Theorem \ref{teorema esistenza par }
we get that
the solution $v$
to Problem \eqref{problema parabolico}
satisfies $\underline{v}\leq v_1\leq v \leq v_2
\leq \overline{v}$ a.e. in $\Omega_T$. 
Moreover, $v_1$ is increasing with respect to
$t$ and $v_2$ is decreasing with respect to 
$t$, let us show it for $v_1$. We fix $\delta>0$, 
we take
$h(x)=v_1(x,\delta)$ as initial datum, and we
denote with $v_h$ the corresponding solution to
Problem \eqref{problema parabolico}. Then,
$v_1 \leq v_h$ and $v_h(x,t)=v_1(
x,t+\delta)$ by uniqueness, hence
$$v_1(x,t)\leq v_1(x,t+\delta) 
\ \text{a.e. in } \Omega\times [0,+\infty).$$
The monotonicity of $v_1$ and $v_2$
yields the
existence of $w_1,w_2 \in H_\Dcal^1
(\Omega)$ such that 
$$v_1(x,t) \rightarrow w_1(x) \ , \ v_2(x,t)
\rightarrow w_2(x) \quad \text{as } t 
\rightarrow
+\infty, \ \text{for a.e. } x \in \Omega.$$
Let us consider the sequence $\left\{ 
v_k\right\}$ such that $v_k(x,t)=v_1(x,t+k)$,
with $0<t<1$,
then $v_k \in L^2(0,1; H_\Dcal^1(\Omega))
\cap H^1(0,1;L^2(\Omega))$
and it satisfies
\begin{equation}\label{(A)}
\begin{cases}
\partial_t v_k - d \text{div}
(\Hcal(\nabla v_k)\nabla_\xi
\Hcal(\nabla v_k))=f(x,v_k) \ & \text{in }
\Omega_1,\\
\hskip135pt v_k=0 & \text{on } 
\Gamma_\Dcal\times (0,1),\\
\hskip46pt
\Hcal(\nabla v_k)\nabla_\xi \Hcal(\nabla v_k)
\cdot n=0 & \text{on } \Gamma_\Ncal\times
(0,1),\\
\hskip111pt v_k(x,0)=v_1(x,k).
\end{cases}
\end{equation}
Then
$$\underline{v}\leq v_{k-1}\leq v_k
\leq v_{k+1}\leq \dots\leq \overline{v} \ 
\text{a.e. 
in } \Omega_T,$$
and $v_k \rightarrow w_1$ in
$L^2(\Omega_T)$. 
Arguing as in the proof of Theorem 
\ref{teorema esistenza par } we deduce 
$\partial_t v_k
\rightharpoonup \partial_t w_1\equiv 0$
in $L^2(\Omega_T)$. Thus, we can pass to the 
limit as $k \rightarrow 0$ in the weak
formulation of \eqref{(A)} to obtain that
$w_1$ is a weak solution to Problem
\eqref{problema non lineare}, and by
its uniqueness, $w_1\equiv u$.
Analogously one can prove that
$w_2\equiv u$, yielding both
\eqref{conver par} and \eqref{conv par 0}.
\end{proof}
\section{Proof of the Main Results}\label{lambda}
This section is devoted to the proof of Theorems
\ref{esistenza sotto d star}, 
\ref{thm par}, \ref{minimo minimo} and
\ref{D intervallo}.
\\
Let us first prove that $\lambda(m)$ defined 
in \eqref{def lambda} is a positive 
minimum. To this aim,  we will suppose that $m\in L^{\infty}(\Omega)$ and 
$m^{+}\not \equiv0$; indeed, while the existence of $\mu(d,m)$ holds for every $m\in L^{\infty}(\Omega)$, the existence of $\lambda(m)$ and its positiveness require that $m^{+}\not \equiv0$.
\begin{proposition}\label{lambda minimo}
Let $m \in L^{\infty}(\Omega)$, with $m^{+}\not \equiv0$.
Then,  $\lambda(m)$, defined in  \eqref{def lambda}, is
attained by a unique positive
eigenfunction $\varphi \in H_\Dcal^1(\Omega)
$, up to a multiplication by a constant. 
Moreover, $\varphi \in
C^{0,\gamma}(\overline{\Omega})$, for some $
\gamma \in (0,1/2)$, and $\lambda(m)$ is the 
unique positive eigenvalue with positive 
eigenfunction.
\end{proposition}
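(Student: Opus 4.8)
The plan is to argue essentially as in Proposition \ref{l'autovalore è minimo}, replacing the quotient there by the Rayleigh quotient in \eqref{def lambda}, and then to add the two uniqueness statements by a Picone-type argument, as in the uniqueness part of Theorem \ref{esistenza ellittico}. First I would check that the infimum in \eqref{def lambda} is over a nonempty set and is a genuine positive number. Since $m^{+}\not\equiv 0$ there is a set of positive measure where $m>0$, so a suitable nonnegative $u\in H_\Dcal^1(\Omega)$ (a small bump supported in $\Omega$) has $\int_\Omega mu^2>0$; hence the admissible set is nonempty and $\lambda(m)<+\infty$. Conversely, for every admissible $u$, combining $\int_\Omega mu^2\le \|m\|_\infty\|u\|_2^2$ with Proposition \ref{poincare} and the lower bound $\Hcal(\nabla u)\ge \underline{\alpha}|\nabla u|$ from \eqref{condizione di crescita H} gives $\int_\Omega \Hcal^2(\nabla u)\ge \underline{\alpha}^2\|\nabla u\|_2^2\ge \frac{\underline{\alpha}^2}{c^2\|m\|_\infty}\int_\Omega mu^2$, so $\lambda(m)\ge \underline{\alpha}^2/(c^2\|m\|_\infty)>0$.

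For existence of a minimizer I would use the direct method. By the $2$-homogeneity of numerator and constraint I pick a minimizing sequence $\{u_n\}$ with $u_n\ge 0$ and $\int_\Omega mu_n^2=1$; then $\{\int_\Omega \Hcal^2(\nabla u_n)\}$ is bounded, whence by \eqref{condizione di crescita H} $\{u_n\}$ is bounded in $H_\Dcal^1(\Omega)$, and up to a subsequence $u_n\rightharpoonup\varphi$ in $H_\Dcal^1(\Omega)$, $u_n\to\varphi$ in $L^2(\Omega)$ and a.e. Since $m\in L^\infty(\Omega)$ the constraint passes to the limit ($\int_\Omega m\varphi^2=1$), so $\varphi\ge 0$ is admissible and nontrivial, while the weak lower semicontinuity of $u\mapsto\|\Hcal(\nabla u)\|_2^2$ (a consequence of the convexity of $\Hcal^2$, already used in Proposition \ref{l'autovalore è minimo}) gives $\int_\Omega \Hcal^2(\nabla\varphi)\le\liminf_n\int_\Omega\Hcal^2(\nabla u_n)=\lambda(m)$; as $\int_\Omega m\varphi^2=1$ the quotient at $\varphi$ is then $\le\lambda(m)$, hence $=\lambda(m)$, so $\varphi$ attains it. From here I would follow Proposition \ref{l'autovalore è minimo} verbatim: testing the quotient along segments $\varphi+t(v-\varphi)$, $v\ge 0$, and using $\nabla_\xi\Hcal(\xi)\cdot\xi=\Hcal(\xi)$ from \eqref{(2.5) Montoro} to cancel the diagonal terms shows that $\varphi$ is a nonnegative weak solution of \eqref{problema anisotropo}; it is bounded by the De Giorgi iteration there (test functions $[T_h(\varphi)]^{2k+1}$, with $\lambda(m)\|m\|_\infty\int_\Omega\varphi^{2(k+1)}$ controlling the right-hand side, cf. \cite{DG'}); it is positive in $\Omega$ by the Harnack inequality of \cite{Trudinger}; and $\varphi\in C^{0,\gamma}(\overline{\Omega})$ for some $\gamma\in(0,1/2)$ by Theorem \ref{regolarità}.

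Finally, for uniqueness of the positive eigenfunction and of the eigenvalue I take $\varphi>0$ the minimizer (eigenvalue $\lambda(m)$) and $\psi>0$ any positive eigenfunction with eigenvalue $\nu$; the same iteration shows $\psi\in L^\infty(\Omega)$, and testing its equation with $\psi$ forces $\nu>0$ since $\|\Hcal(\nabla\psi)\|_2^2>0$ ($\psi$ is nonconstant, having zero trace on $\Gamma_\Dcal$ and being positive inside). I then test the equation for $\varphi$ with $\psi^2/(\varphi+\varepsilon)$ — which lies in $H_\Dcal^1(\Omega)$ because $\varphi+\varepsilon\ge\varepsilon>0$ and $\tr\psi=\tr\varphi=0$ on $\Gamma_\Dcal$ — apply the anisotropic Picone-type inequality \cite[Lemma 2.2]{Jaros} exactly as in the uniqueness argument of Theorem \ref{esistenza ellittico}, and let $\varepsilon\to 0^+$, obtaining $\lambda(m)\int_\Omega m\psi^2\le\int_\Omega\Hcal^2(\nabla\psi)=\nu\int_\Omega m\psi^2$, i.e. $\lambda(m)\le\nu$. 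Exchanging the roles of $\varphi$ and $\psi$ gives $\nu\le\lambda(m)$, hence $\nu=\lambda(m)$; since $\Omega$ is connected, the equality case of the Picone-type inequality forces $\psi/\varphi$ to be constant, which yields at once the uniqueness of the positive eigenfunction up to a multiplicative constant and the fact that $\lambda(m)$ is the only positive eigenvalue admitting a positive eigenfunction.

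The steps based on the direct method and on the $L^\infty$–Harnack–regularity chain are nearly verbatim repetitions of Proposition \ref{l'autovalore è minimo}, so the only genuinely new content is the last paragraph, which I also expect to be the main obstacle: making the cross-testing rigorous through the $\varepsilon$-regularization under the mixed boundary conditions, and extracting the equality case of the anisotropic Picone-type inequality notwithstanding the lack of evenness of $\Hcal$ (the same asymmetry that, as noted in the remark after Proposition \ref{l'autovalore è minimo}, forces the whole analysis to take place on the cone of nonnegative functions).
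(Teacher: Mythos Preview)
Your proposal is correct and follows essentially the same route as the paper: the existence part (nonemptiness of the admissible set, direct method with the normalization $\int_\Omega m u^2=1$, weak lower semicontinuity, then the $L^\infty$--Harnack--Theorem~\ref{regolarità} chain) is virtually identical, and for the two uniqueness statements the paper simply refers to \cite{Pepisc}, whose argument is precisely the Picone/Jaros cross-testing you spell out. The only addition on your side is the explicit lower bound $\lambda(m)\ge \underline{\alpha}^2/(c^2\|m\|_\infty)>0$, which the paper leaves implicit.
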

\begin{proof}
First, we observe that, thanks to assumption 
\eqref{H pos omogenea}, the quotient in \eqref{def lambda}
 is invariant up to a  positive normalization of u, thus we can find $
\lambda(m)$ by solving the following 
constraint minimization problem
$$\lambda(m)=\inf\left\{ \int_\Omega 
\Hcal^2(\nabla u) \ : \ u \in 
H_\Dcal^1(\Omega), \ u \geq 0 , \ \int_\Omega 
mu^2=1 \right\}.$$
The set we are minimizing on is not empty, 
indeed, having observed that $\chi_{
\left\{m>0\right\}} \in L^2(\Omega)$, 
there 
exists a sequence $\left\{ \psi_n \right\} 
\subset C_c^\infty(\Omega)$ such that $
\psi_n \rightarrow \chi_{\left\{m>0\right\}}$ 
in $L^2(\Omega)$, hence 
\begin{align*}
\int_\Omega m\psi_n^2 &=\left\{ 
\int_\Omega m\chi_{\left\{m>0\right\}}^2 
+\int_\Omega m\left( \psi_n^2-
\chi_{\left\{m>0\right\}}^2 \right) 
\right\}
\geq \int_{\left\{ m>0 \right\}}m-\| m 
\|_\infty\int_\Omega \left| \psi_n^2 -
\chi_{\left\{m>0\right\}}^2 \right|>0,
\end{align*}
for $n$ sufficiently large. This implies that
$\lambda(m)<\infty$.\\
We set $\mathcal{I}=\left\{ u \in 
H_\Dcal^1(\Omega)\ : \ u \geq 0 , \ 
\int_\Omega 
mu^2=1 \right\}$.
Let $\left\{ \varphi_n \right\}
\subset 
\Ical$ be a minimizing sequence, then, 
thanks to \eqref{condizione di crescita H},
we get that
$\left\{ \varphi_n \right\}$
is bounded in $H_\Dcal^1(\Omega)$, so that there 
exists $\varphi \in H_\Dcal^1(\Omega)$ such 
that $\varphi_n 
\rightharpoonup \varphi$ in 
$H_\Dcal^1(\Omega)
$. 
By the compact embedding it follows that, up to 
a 
sub-sequence, $\varphi_n \rightarrow 
\varphi$ a.e. and in $L^2(\Omega)$, yielding
$$1=\lim_{n\to +\infty} \int_\Omega m
\varphi_n^2=\int_\Omega m \varphi^2.$$
Then $\varphi \in \Ical$ and $$
\int_\Omega \Hcal^2(\nabla \varphi)\geq 
\lambda(m).$$ 
On the other hand, by 
the weak lower semicontinuity of
the $L^2$-norm of 
$\Hcal(\nabla \cdot)$
it follows that $$\int_\Omega 
\Hcal^2(\nabla 
\varphi)\leq \liminf_{n\rightarrow +\infty}  
\int_\Omega \Hcal^2(\nabla \varphi_n)= 
\lambda(m),$$
hence $$\int_\Omega \Hcal^2(\nabla 
\varphi)=\lambda(m).$$
In particular, it is easy to check that
$\varphi$ is a weak solution 
to Problem \eqref{problema anisotropo}.
Now, arguing as in \cite{DG'} we 
obtain that $\varphi \in L^\infty(\Omega)$, 
thus we can use the Harnack inequality 
proved in 
\cite[Theorem 1.1]{Trudinger} to obtain 
that $\varphi$ is positive in $\Omega$, and 
by applying Theorem \ref{regolarità} one 
has 
$\varphi \in C^{0,\gamma}(\overline{\Omega})
$, 
for some $\gamma\in (0,1/2)$.
To prove that the positive eigenfunction is 
unique and that $\lambda(m)$ is the only 
eigenvalue with positive eigenfunction, one 
can argue as in \cite{Pepisc}. 
\end{proof}
Proposition \ref{lambda minimo} ensures
that Problem \eqref{problema anisotropo} 
actually admits a unique positive principal 
eigenvalue, thus from now on $\lambda(m)$ 
will denote the positive principal 
eigenvalue associated with Problem 
\eqref{problema anisotropo}.\\
We are now in a position to conclude the proofs 
of 
Theorems \ref{esistenza sotto d star} and
\ref{thm par}.
\begin{proof}[Proof of Theorems
\ref{esistenza sotto d star} and \ref{thm par}]
Let us fix 
\[d^*=\frac{1}{\lambda(m)}=\dfrac{\int_{\Omega} m\varphi^{2}}{\int_{\Omega}\Hcal^{2}(\nabla \varphi)},
\]
where $\varphi\in H_\Dcal^1(\Omega)$ be the
positive, bounded eigenfunction associated
with $\lambda(m)$, found thanks to Proposition
\ref{lambda minimo}.
We will prove that 
$\mu(d,m)<0$ if and only if 
$0<d<d^*$.
Since $m$ is fixed in this argument, we omit the dependence of $m$ in $\mu$ and we denote $\mu(d,m)=
\mu(d)$.
If $d<d^*$, then
$$d\int_\Omega \Hcal^2(\nabla \varphi)
-\int_\Omega m\varphi^2<0,$$
so that
$$\mu(d)\leq \frac{d\int_\Omega \Hcal^2(\nabla 
\varphi)
-\int_\Omega m\varphi^2}{\int_\Omega \varphi^2}
<0.$$
On the other hand, if $\mu(d)<0$, taking $\phi$ 
as the 
positive, bounded 
eigenfunction associated with $\mu(d)$,
found thanks to Proposition 
\ref{l'autovalore è minimo}, it results
$$d\int_\Omega \Hcal^2(\nabla \phi)-
\int_\Omega m\phi^2<0,$$
so that $\int_\Omega m \phi^2>0$ and 
$$d<\frac{1}{\frac{\int_\Omega \Hcal^2(\nabla 
\phi)}{\int_\Omega m\phi^2}}\leq \frac{1}{\lambda
(m)}=d^*.$$
Then,  
the proof of Theorem \ref{esistenza sotto d 
star} can be concluded by applying Theorem \ref{esistenza 
ellittico}, while the proof of Theorem
\ref{thm par} by exploiting Theorem 
\ref{es par mu}.
\end{proof}
As a consequence of the previous results,
we obtain that the existence and persistence
of $v(x,t)$ is guaranteed if and only if
$\lambda(m)<\frac{1}{d}$, then, in order to
maximize the chances of survival we aim to minimize $\lambda(m)$.
To this goal, we need to fix a class where $m$ varies.  In particular, 
 taking also into account \cite[Theorem 3.1]{8}, an integral
bound is commonly adopted, so that we address
the Problem \eqref{problema di minimizzazione  spettrale}. 
In particular, in the following
we prove that it is solved by a bang-bang
weight.
\begin{proof}[Proof of Theorem \ref{minimo 
minimo}]
Thanks to
\eqref{condizione di crescita H}
and Proposition \ref{poincare},
it is possible to argue as in 
\cite[Theorem 3.2]{12}
to prove that $\Lambda$ is a minimum,
namely there exists $m \in \Mcal$ such that
$\Lambda=\lambda(m)$.
We now prove that the optimal weight is 
of bang-bang type.
Let $\varphi$ be the positive bounded 
eigenfunction associated with $
\Lambda$, the Bathtub principle 
(see e.g. \cite[Lemma 3.3]{12}) implies that there
exists a measurable set $\omega\subset
\Omega$ such that $\left\{ \varphi>t \right\}
\subset 
\omega \subset \left\{\varphi\geq t  
\right\}$, with $|\omega|=\frac{\beta+
m_0}{1+\beta}|\Omega|$ and 
\begin{equation}\label{peso minimo bang-bang}
\Lambda=\frac{\int_\Omega \Hcal^2(\nabla 
\varphi)}{\int_\Omega m\varphi^2}\geq 
\frac{\int_\Omega \Hcal^2(\nabla \varphi)}
{\sup_{\widetilde{m}\in \Mcal}\int_\Omega 
\widetilde{m}
\varphi^2}=\frac{\int_\Omega \Hcal^2(\nabla 
\varphi)}{\int_\Omega (\chi_\omega -
\beta\chi_{\omega^c})\varphi^2}\geq \Lambda.
\end{equation}
So that, 
$\Lambda$ is achieved by $m=\chi_\omega-\beta
\chi_{\omega^c}$.
Moreover, taking into account assumptions
\eqref{H positiva}, \eqref{H pos omogenea} and
\eqref{convessità}, and that
$\Hcal \in C^2(\mathbb{R}^N \setminus 
\left\{ 0 \right\})$, we can exploit 
\cite[Corollary 1.7]{Antonini}
to obtain that $\left| \left\{ \varphi=t
\right\}\right|=0$, concluding the proof.
\end{proof}
Knowing that the optimal set 
$\omega$ is
a superlevel set of the positive buonded
eigenfunction associated with $\Lambda$,
we want to detect it, at least in dimension
one.
With this aim,
let us take $N=1$, $\Hcal$ as in \eqref{H 
unidimensionale}, and suppose that $
\Omega=(0,1)$.
We will describe $\omega$ 
by means of
the anisotropic rearrangements studied 
in \cite{Pepisc}, see also \cite{15}.
\begin{proof}[Proof of Theorem \ref{D intervallo}]
Let us prove the result for Problem
\eqref{problema unidmensionale DN}.
Let $\varphi$ be the bounded positive 
eigenfunction associated with $
\Lambda=\lambda(\chi_\omega-
\beta\chi_{\Omega\setminus 
\omega})=\lambda(\omega)$.
Let us take $\varphi^*$ and $(m_\omega)^*$ 
respectively as the monotone increasing 
rerrangements of $\varphi$ and 
$m_\omega=\chi_\omega-
\beta\chi_{\Omega\setminus \omega}$. Then,
it results that 
$\varphi^* \in H_\Dcal^1(0,1)$ and $
(m_\omega)^* 
\in \Mcal$, indeed, $-\beta\leq 
(m_\omega)^*(x)\leq 1$ a.e. in $(0,1)$, and 
by the equimeasurability it follows that $$
\int_0^1 m_\omega^*(x)dx=\int_0^1 
m_\omega(x)dx\leq m_0.$$
Moreover, by applying the Hardy-Littlewood 
inequality (see \cite[Property P1]{
15}) it follows that 
$$\int_0^1 (m_\omega)^*(\varphi^*)^2 \geq 
\int_0^1 m_\omega \varphi^2>0,$$
so that, using Polya inequality
\cite[Proposition 4.1]{Pepisc} (see also \cite{PPS'}) one has
$$\frac{\int_0^1 \Hcal^2((\varphi^*)')}
{\int_0^1 (m_\omega)^*(\varphi^*)^2}\leq 
\frac{\int_0^1 \Hcal^2(\varphi')}{\int_0^1 
m_\omega \varphi^2}=\Lambda\leq 
\frac{\int_0^1 \Hcal^2((\varphi^*)')}
{\int_0^1 (m_\omega)^*(\varphi^*)^2},$$
yielding that $$\int_0^1 
\Hcal^2(\varphi')=\int_0^1 
\Hcal^2((\varphi^*)').$$ 
Indeed,  supposing by contradiction that $
\int_0^1 \Hcal^2((\varphi^*)')< \int_0^1 
\Hcal^2(\varphi')$, one has,
exploiting Polya inequality,
$$\frac{\int_0^1 \Hcal^2(\varphi')}
{\int_0^1 m_\omega\varphi^2}
>\frac{\int_0^1 \Hcal^2((\varphi^*)')}
{\int_0^1 m_\omega\varphi^2}\geq 
\frac{\int_0^1 \Hcal^2((\varphi^*)')}
{\int_0^1 (m_\omega)^* (\varphi^*)^2}
=\frac{\int_0^1 \Hcal^2(\varphi')}{\int_0^1 
m_\omega\varphi^2},$$ which is absurd. 
Thus, by applying \cite[Proposition 4.2]
{Pepisc} we obtain that $\varphi$ is 
monotone, and by Theorem \ref{minimo 
minimo} it follows that $\omega$ is an 
interval. Moreover, since $\varphi(0)=0$, 
and $\varphi>0$ in $(0,1)$, it is 
necessairly monotone increasing, and since 
$\omega$ is the superlevel set of $\varphi$ with 
measure $\frac{\beta+m_0}{1+\beta}$, the 
conclusion holds. If one has Problem 
\eqref{problema unidimensionale ND}, it
can be proved analogously that $\omega=\left(0,
\frac{\beta+m_0}{1+\beta}\right)$, using 
the monotone decreasing rearrangement. 
\end{proof}
\begin{figure}[h]
\begin{tikzpicture}[scale=0.025cm]
\draw[-latex](0,0) -- (5,0);
\draw[-latex](0,0) -- (0,3);
\draw(0,0) -- (-1,0);
\draw(0,0) -- (0,-1.4);
\draw[thick, samples=500, domain=0.01:1.7] plot (\x,{2 });
\draw[thick, red, samples=500, domain=0.01:1.7] plot (\x, {0});
\draw[thick,  samples=500, domain=1.7:4.7] plot (\x,{-1 });
\node at (5,-0.2) {\tiny{$x$}};
\node at (-0.2,2.85) {\tiny{$y$}};
\node at (2.2,2) {\small{$m_{\omega}$}};
\node at (1,0.3) {\textcolor{red}{$\omega$}};
\node at (-0.2,-0.3) {\tiny{$0$}};
\fill [black] (1.7,0)  circle  (0.04);
\node at (1.9,-0.3) {\tiny{$|\omega|$}};
\node at (4.7,-0.3) {\tiny{$1$}};
\node at (-0.4,-1) {\tiny{$-\beta$}};
\node at (-0.4,2) {\tiny{$1$}};
\draw[-latex](6,0) -- (11,0);
\draw[-latex](6,0) -- (6,3);
\draw(6,0) -- (5.5,0);
\draw(6,0) -- (6,-1.4);
\draw[thick,  samples=500, domain=6:9] plot (\x,{-1 });
\draw[thick, red,samples=500, domain=9:10.7] plot (\x,{0 });
\draw[thick, samples=500, domain=9:10.7] plot (\x,{2 });
\node at (11,-0.2) {\tiny{$x$}};
\node at (5.8,2.85) {\tiny{$y$}};
\node at (5.8,-0.3) {\tiny{$0$}};
\node at (10.7,-0.3) {\tiny{$1$}};
\node at (8.7,2) {\small{$m_\omega$}};
\node at (5.7,2) {\tiny{$1$}};
\node at (5.6,-1) {\tiny{$-\beta$}};
\fill [black] (9,0)  circle  (0.04);
\node at (9,-0.3) {\tiny{$1-|\omega|$}};
\node at (9.7,0.3) {\textcolor{red}{$\omega$}};
\end{tikzpicture}
\caption{Representation of the optimal weight
$m_\omega$ in dimension 1,
according to the mixed boundary conditions.}\label{figura 1}
\end{figure}
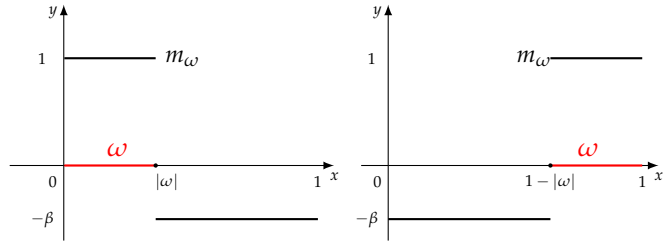
Figure \ref{figura 1} shows that, regardless of the 
anisotropy, the position of the optimal
interval depends exclusively on  the mixed 
boundary conditions are posed.
We remark that this situation is completely 
different from the one with homogeneous Neumann 
or Dirichlet
boundary conditions, since in those cases
the position of the optimal set is determined by
$a$ and $b$, as
shown in Figures \ref{figura 2} and 
\ref{figura 3} (see for instance
\cite{Pepisc}).
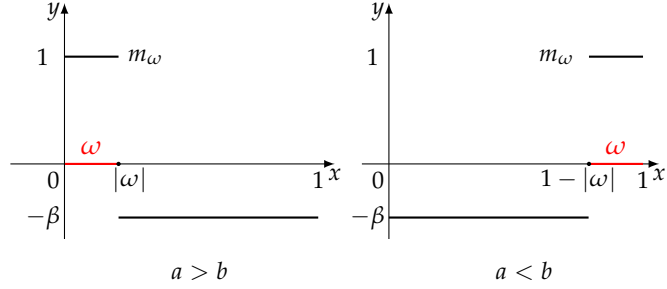
\begin{figure}[h]
\begin{tikzpicture}[scale=0.025cm]
\draw[-latex](0,0) -- (5,0);
\draw[-latex](0,0) -- (0,3);
\draw(0,0) -- (-1,0);
\draw(0,0) -- (0,-1.4);
\draw[thick, samples=500, domain=0.01:1] plot (\x,{2 });
\draw[thick, red, samples=500, domain=0.01:1] plot (\x, {0});
\draw[thick, samples=500, domain=1:4.7] plot (\x,{-1 });
\node at (5,-0.2) {\small{$x$}};
\node at (-0.2,2.85) {\small{$y$}};
\node at (1.5,2) {\small{$m_{\omega}$}};
\node at (0.5,0.3) {\textcolor{red}{$\omega$}};
\node at (-0.2,-0.3) {\small{$0$}};
\fill [black] (1,0)  circle  (0.04);
\node at (1.2,-0.3) {\small{$|\omega|$}};
\node at (4.7,-0.3) {\small{$1$}};
\node at (-0.4,-1) {\small{$-\beta$}};
\node at (-0.4,2) {\small{$1$}};
\node at (2.5,-2) {\small{$a>b$}};
\draw[-latex](6,0) -- (11,0);
\draw[-latex](6,0) -- (6,3);
\draw(6,0) -- (5.5,0);
\draw(6,0) -- (6,-1.4);
\draw[thick, samples=500, domain=6:9.7] plot (\x,{-1 });
\draw[thick, red,samples=500, domain=9.7:10.7] plot (\x,{0 });
\draw[thick, samples=500, domain=9.7:10.7] plot (\x,{2 });
\node at (11,-0.2) {\small{$x$}};
\node at (5.8,2.85) {\small{$y$}};
\node at (5.8,-0.3) {\small{$0$}};
\node at (10.7,-0.3) {\small{$1$}};
\node at (9.1,2) {\small{$m_\omega$}};
\node at (5.7,2) {\small{$1$}};
\node at (5.6,-1) {\small{$-\beta$}};
\fill [black] (9.7,0)  circle  (0.04);
\node at (9.5,-0.3) {\small{$1-|\omega|$}};
\node at (10.2,0.3) {\textcolor{red}{$\omega$}};
\node at (8.5,-2) {\small{$a<b$}};
\end{tikzpicture}
\caption{Representation of
the optimal weight $m_\omega$ in dimension 1, 
with 
homogeneous Neumann 
boundary conditions.}\label{figura 2}
\end{figure}
\begin{figure}[h]
\begin{tikzpicture}[scale=0.025cm]
\draw[-latex](0,0) -- (5,0);
\draw[-latex](0,0) -- (0,3);
\draw(0,0) -- (-1,0);
\draw(0,0) -- (0,-1.4);
\draw[thick, samples=500, domain=2.9:4.2] plot (\x,{2 });
\draw[thick, red, samples=500, domain=2.9:4.2] plot (\x, {0});
\draw[thick, samples=500, domain=0.01:2.2] plot (\x,{-1 });
\draw[thick, samples=500, domain=4.2:4.7] plot (\x,{-1 });
\node at (5,-0.2) {\small{$x$}};
\node at (-0.2,2.85) {\small{$y$}};
\node at (2.2,2) {\small{$m_{\omega}$}};
\node at (3.5,0.3) {\textcolor{red}{$\omega$}};
\node at (-0.2,-0.3) {\small{$0$}};
\fill [black] (2.9,0)  circle  (0.04);
\fill [black] (4.2,0)  circle  (0.04);
\node at (2.9,-0.3) {\small{$c_1$}};
\node at (4.2,-0.3) {\small{$c_2$}};
\node at (4.7,-0.3) {\small{$1$}};
\node at (-0.4,-1) {\small{$-\beta$}};
\node at (-0.4,2) {\small{$1$}};
\node at (2.5,-2) {\small{$a>b$}};
\draw[-latex](6,0) -- (11,0);
\draw[-latex](6,0) -- (6,3);
\draw(6,0) -- (5.5,0);
\draw(6,0) -- (6,-1.4);
\draw[thick, samples=500, domain=6:6.5] plot (\x,{-1 });
\draw[thick, samples=500, domain=7.8:10.7] plot (\x,{-1 });
\draw[thick, red,samples=500, domain=6.5:7.8] plot (\x,{0 });
\draw[thick, samples=500, domain=6.5:7.8] plot (\x,{2 });
\node at (11,-0.2) {\small{$x$}};
\node at (5.8,2.85) {\small{$y$}};
\node at (5.8,-0.3) {\small{$0$}};
\node at (10.7,-0.3) {\small{$1$}};
\node at (8.3,2) {\small{$m_\omega$}};
\node at (5.7,2) {\small{$1$}};
\node at (5.6,-1) {\small{$-\beta$}};
\fill [black] (6.5,0)  circle  (0.04);
\fill [black] (7.8,0)  circle  (0.04);
\node at (6.5,-0.3) {\small{$c_1$}};
\node at (7.8,-0.3) {\small{$c_2$}};
\node at (7.2,0.3) {\textcolor{red}{$\omega$}};
\node at (8.5,-2) {\small{$a<b$}};
\end{tikzpicture}
\caption{Representation of
the optimal weight $m_\omega$ in dimension 1,
with 
homogeneous Dirichlet 
boundary conditions. Here
$c_1=\frac{1-|\omega|}{a+b} a$ and
$c_2=\frac{|\omega|b+a}{a+b}$.}\label{figura 3}
\end{figure}
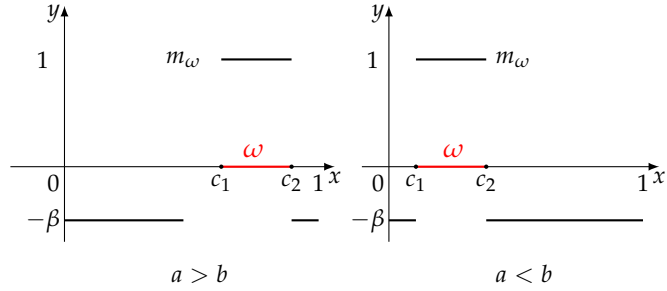
\appendix
\section{Regularity}\label{reg}
In this section we investigate the H\"older 
regularity of bounded weak solutions to 
problem
\begin{equation}\label{problema con f}
\begin{cases}
-\text{div}(\Hcal(\nabla u)\nabla_\xi 
\Hcal(\nabla u))=f \ &\text{in } \Omega,\\
\hskip105pt u>0 &\text{in } \Omega,\\
\hskip105pt u=0 &\text{on }  \Gamma_\Dcal, \\
\hskip19pt \Hcal(\nabla u)\nabla_\xi 
\Hcal(\nabla u)\cdot n =0 &\text{on } 
\Gamma_\Ncal,
\end{cases}
\end{equation}
where $f$ belongs to 
$L^r(\Omega)$, with $r>N/2$, so that the 
results apply to the positive bounded 
solutions to
Problems \eqref{problema non 
lineare}, \eqref{problema 
anisotropo} and \eqref{problema mu}.
In particular, we aim to prove the following 
result.
\begin{theorem} \label{regolarità}
Let $u \in H_\Dcal^1(\Omega)$ be a bounded weak 
solution to Problem \eqref{problema con f}. 
Then, $u \in C^{0,\gamma}
\left(\overline{\Omega}\right)
$, 
for some $0<\gamma<\frac{1}{2}$.
\end{theorem}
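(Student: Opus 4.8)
The plan is to derive the $C^{0,\gamma}$ bound by a localization argument, treating in turn the interior of $\Omega$, a neighbourhood of $\Gamma_\Ncal$, a neighbourhood of $\Gamma_\Dcal$ at positive distance from the junction, and finally a neighbourhood of the edge $\Gamma=\overline{\Gamma_\Dcal}\cap\overline{\Gamma_\Ncal}$; the last region is the one that forces $\gamma<1/2$. First I would rewrite the principal part in structural form by setting $a(\xi):=\Hcal(\xi)\nabla_\xi\Hcal(\xi)=\tfrac12\nabla_\xi(\Hcal^2)(\xi)$, so that \eqref{condizione di crescita H}, \eqref{(2.5) Montoro} and \eqref{(2.6) Montoro} give $a(\xi)\cdot\xi=\Hcal^2(\xi)\ge\underline{\alpha}^2|\xi|^2$ and $|a(\xi)|\le\overline{\alpha}\,Q\,|\xi|$, while \eqref{hessiano H} yields the strong monotonicity $(a(\xi)-a(\eta))\cdot(\xi-\eta)\ge\tfrac{\Xi}{2}|\xi-\eta|^2$. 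Since the solution $u$ is bounded and $f\in L^{r}(\Omega)$ with $r>N/2$, the classical De Giorgi--Nash--Moser theory for divergence--form operators with these structure conditions (which uses only measurable dependence on $x$, hence survives any bi--Lipschitz change of variables) applies and gives, via geometric decay of $\operatorname{osc}_{B_\rho}u$, that $u\in C^{0,\gamma}_{\mathrm{loc}}(\Omega)$ for some $\gamma\in(0,1)$.

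Next I would treat the boundary. Near a point of $\Gamma_\Ncal$ I would straighten $\partial\Omega$ by a $C^{2}$ diffeomorphism, which transforms the equation into $-\operatorname{div}(\tilde a(x,\nabla u))=\tilde f$ on a half--ball $B_R^{+}$ with $\tilde a$ Carath\'eodory satisfying the same ellipticity and growth bounds, $\tilde f\in L^{r}$, and the conormal condition $\tilde a(x,\nabla u)\cdot e_N=0$ on $\{x_N=0\}$; the even reflection $\tilde u(x',x_N):=u(x',|x_N|)$ then belongs to $H^{1}(B_R)$ and is a weak solution in the whole ball of a uniformly elliptic measurable--coefficient equation (the conormal flux matching across $\{x_N=0\}$ is exactly the Neumann condition), so interior De Giorgi applied to $\tilde u$ gives H\"older continuity of $u$ up to $\Gamma_\Ncal$. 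Near a point of $\Gamma_\Dcal$ at positive distance from $\Gamma$, I would again straighten the boundary and invoke the boundary De Giorgi oscillation lemma for the homogeneous Dirichlet condition, which needs no structural symmetry of $a$ --- only that $u$ vanishes on a flat portion of $\partial\Omega$, so that its truncations extend by zero across that portion; this yields H\"older continuity of $u$ up to $\Gamma_\Dcal\setminus\Gamma$. I stress that an \emph{odd} reflection is unavailable, since the anisotropic $\Hcal$ need not be even, but it is also not needed.

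The delicate step, and the main obstacle, is a neighbourhood of the junction $\Gamma$. Here I would straighten $\partial\Omega$ so that $\Omega$ becomes $B_R^{+}$, $\Gamma_\Ncal$ becomes $\{x_N=0,\ x_{N-1}>0\}$ and $\Gamma_\Dcal$ becomes the half--hyperplane $\Sigma:=\{x_N=0,\ x_{N-1}\le 0\}$, and then double by the even reflection $\tilde u(x',x_N):=u(x',|x_N|)$ across $\{x_N=0\}$. As in the previous step the reflected equation holds across the Neumann part $\{x_N=0,\ x_{N-1}>0\}$, so $\tilde u\in H^{1}(B_R)$ is a weak solution of a uniformly elliptic equation with bounded measurable coefficients and right--hand side in $L^{r}$, $r>N/2$, on $B_R\setminus\Sigma$, with $\tilde u=0$ on $\Sigma$. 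This is the Dirichlet screen (crack) problem, and for it I would run a Stampacchia--type truncation and iteration on concentric balls centred on $\Sigma$, in the form adapted to mixed boundary conditions in \cite{Colorado Peral, Colorado Peral 2004} (cf. \cite{Stampacchia}): because the coincidence set $\{\tilde u=0\}$ contains only the Lebesgue--null set $\Sigma$, the usual De Giorgi density alternative must be replaced by a capacitary estimate built on the positive, codimension--one $H^{1}$--capacity of $\Sigma$, and this capacitary weight lets the oscillation decay only at rate $\rho^{\gamma}$ with $\gamma<1/2$ --- precisely Shamir's sharp threshold \cite{Shamir}. I expect the genuine technical work to be concentrated here: carrying the $L^{r}$ inhomogeneity through the capacitary iteration without degrading the exponent, and checking that the anisotropic operator, after the (non--orthogonal) flattening and the reflection, still satisfies the scalar structure conditions these estimates require --- the latter being routine bookkeeping rather than a new difficulty.

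Finally, a standard covering argument patches the local estimates --- interior, near $\Gamma_\Ncal$, near $\Gamma_\Dcal\setminus\Gamma$, and near $\Gamma$ --- into the global bound $u\in C^{0,\gamma}(\overline{\Omega})$, with $\gamma$ the smallest of the local exponents, which by the previous paragraph can be chosen in $(0,1/2)$.
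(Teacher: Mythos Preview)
Your proposal is correct in spirit but follows a genuinely different route from the paper. The paper does \emph{not} straighten the boundary or reflect; instead it works directly in $\Omega$ and isolates, as its only new technical ingredient, a Caccioppoli inequality near the interface (Theorem~\ref{thm 6.2 cp}): for every $y\in\overline{\Omega}$, every $0<\rho<R<\overline{\rho}(y)$ and every admissible level $k\in K(y)$ (see~\eqref{def K+}),
\[
\int_{A(k;\rho)}|\nabla u|^{2}\le \frac{\Pi}{(R-\rho)^{2}}\int_{A(k;R)}|u-k|^{2}
+\Theta\Bigl(\int_{A(k;R)}|f|^{r}\Bigr)^{2/r}|A(k;R)|^{\frac{(N+2)r-2N}{Nr}}.
\]
This is obtained by testing with $v=G_{k}(u)\alpha(|x-y|)$ for a smooth cut--off $\alpha$; the restriction $k\in K(y)$ is precisely what guarantees $v\in H^{1}_{\Dcal}(\Omega)$ near the junction. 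Interior and pure Dirichlet/Neumann boundary regularity are declared classical, and once the Caccioppoli estimate is available the paper simply invokes Theorems~5.12--5.14 of \cite{Colorado Peral 2004} for the Stampacchia iteration yielding $C^{0,\gamma}$ with $\gamma<1/2$.

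Your route --- straighten, evenly reflect across the Neumann face, and reduce to a Dirichlet screen problem on a full ball --- is a legitimate alternative, and your verification that the reflected field $\bar a(x,\xi)=R\,\tilde a(Rx,R\xi)$ retains the ellipticity, growth and monotonicity bounds is the right check. But note that after the reflection you still need the same Stampacchia machinery you cite from \cite{Colorado Peral, Colorado Peral 2004}, whose formulation is already tailored to the mixed problem in the original domain; the reflection therefore adds a preprocessing step (change of variables, definition of the reflected operator, the crack formulation) without avoiding any of the analysis. The paper's approach buys directness and makes the anisotropic adaptation transparent: the only place where $\Hcal$ enters is through \eqref{condizione di crescita H}, \eqref{(2.5) Montoro}, \eqref{(2.6) Montoro} in the Caccioppoli estimate. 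Your approach buys a conceptual link to the crack problem and Shamir's $1/2$ threshold, at the cost of the extra bookkeeping you yourself flag.
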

We will follow \cite{Stampacchia}, (see
also \cite{Colorado Peral, Colorado
Peral 2004}).
In order to do that, let us introduce some 
notations.
For $u \in H_\Dcal^1(\Omega)$ we denote 
$A(k):=\left\{ x \in \Omega \ : \ u(x)>k 
\right\}$, with $k \in \mathbb{R}$, and for 
$y \in \overline{\Omega}$ and $\rho>0$ we 
denote $\Omega(y,\rho):=B_\rho(y)\cap 
\Omega$, and 
\begin{equation}\label{Akr}
A(k;\rho):=A(k)\cap \Omega(y,
\rho).
\end{equation}
In the subsequent proof we will make use of the following Sobolev inequality  
{see \cite[p. 499]{Colorado Peral}}.
\begin{rmk}\label{rem:sob}
Since $\Omega$ is a $C^2$ 
domain,
for all $y \in \Gamma$, 
there exist $\beta>0$ and $\tilde{\rho}(y)>0$ 
such that for all $0<\rho<\tilde{\rho}(y)$ 
one has
\begin{equation}\label{sobolev+poincare}
\| u \|_{L^{2^*}(\Omega(y,\rho))}\leq \beta 
\| \nabla u \|_{L^{2}(\Omega(y,\rho))} \ \ 
\text{for all} \ u \in H_\Dcal^1(\Omega).
\end{equation}
Indeed, by the Sobolev inequality it follows 
the existence of $c>0$ such that 
\begin{equation}\label{dis sobolev}
\| u \|_{L^{2^*}(\Omega(y,\rho))}\leq c \|  
u \|_{H^1(\Omega(y,\rho))}
\end{equation}
and since $u \in H_\Dcal^1(\Omega)$, $u \in 
H_\Dcal^1(\Omega(y,\rho))$, being $
\Gamma_\Dcal(\Omega(y,\rho))=\Gamma_\Dcal 
\cap 
\Omega( y,\rho)$. Thus by applying Proposition 
\ref{poincare} we deduce that there exists $c'>0$ such that 
$$\|  u \|_{H^1(\Omega(y,\rho))}\leq c' \| 
\nabla u \|_{L^{2}(\Omega(y,\rho))},$$
so that, setting $\beta=cc'$, one has 
\eqref{sobolev+poincare}.
\end{rmk}
We observe that in every $\Omega'\subset\subset
\Omega \cup \Gamma_\Dcal$ or
$\Omega'\subset\subset
\Omega \cup \Gamma_\Ncal$ the H\"older 
regularity of solutions to Problem
\eqref{problema con f} is classical.
Hence, it 
sufficies to prove that the solutions are
H\"older continuous in $\Omega(y,\rho)$ for
every $y \in \Gamma$, where we recall that $
\Gamma=\overline{\Gamma_\Dcal}\cap \overline{
\Gamma_\Ncal}$.
Moreover, let us point out that that Theorem \ref{regolarità} can be demonstrated 
following the strategy in \cite[Section 5]{Colorado Peral 2004}.
In particular,  Theorems 5.12, 5.13 and 5.14  can be proved in the
same way, once the Theorem \ref{thm 6.2 cp} below is shown.
To state this result, let us introduce the set $K(y)$, defined, for every 
 $y \in \overline{\Omega}$, as follows
\begin{equation}\label{def K+}
K(y):=\left\{ k \in \mathbb{R} \ : \ 
\exists \ \overline{\rho}(y) \ : \, 
\forall \alpha 
\in C^1(\mathbb{R}^N), \  \ 
\text{supp}\alpha \subset B_{
\overline{\rho}(y)}(y), 
 T_k(u)\alpha \in 
H_\Dcal^1(\Omega) \ \right\},
\end{equation}
where we recall  that
$T_k(u):=\min\left\{  u,k \right\}\in H_\Dcal^1(\Omega)$ for every $
u \in H_\Dcal^1(\Omega)$.
\\
Roughly speaking, $k\in K(y)$ if there exists a sufficiently small radius $\overline{\rho}(y)$, such that $ T_k(u)\alpha(x) \in 
H_\Dcal^1(\Omega)$, for every $\alpha$ regular with support contained in $B_{
\overline{\rho}(y)}(y)$. Note that if $y\in \Gamma_{\Ncal}$, the radius $\overline{\rho}(y)$ can be large, while it is needed to be small when $y\in \overline{\Gamma_{\Dcal}}$.
\begin{theorem}\label{thm 6.2 cp}
Let $u \in H_\Dcal^1(\Omega)$ be a nonnegative weak
solution 
to Problem \eqref{problema con f}. Then, 
there exist two constants
$\Pi,\Theta>0$ depending on $\underline{\alpha
}, \overline{\alpha}, Q$,
such that for 
every $y \in \overline{\Omega}$, 
$0<\rho<R<\overline{\rho}(y)$ and every $k \in 
K(y)$ it results
\begin{equation}\label{7.1}
\int_{A(k;\rho)}|\nabla u|^2 \leq 
\frac{\Pi}{(R-\rho)^2}\int_{A(k;R)}|u-k|^2 
+\Theta
\left(\int_{A(k;R)}|f|^r\right)^{2/r}\left| 
A(k;R) \right|^{
\frac{(N+2)r-2N}{Nr}}.
\end{equation}
\end{theorem}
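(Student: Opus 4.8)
The plan is to establish the Caccioppoli-type estimate \eqref{7.1} by testing the weak formulation of \eqref{problema con f} with a suitable truncation of $u$ localized near the point $y\in\overline\Omega$. Specifically, for $y\in\overline\Omega$, $0<\rho<R<\overline\rho(y)$, and $k\in K(y)$, I would pick a cutoff function $\alpha\in C^1(\mathbb R^N)$ with $\operatorname{supp}\alpha\subset B_R(y)$, $\alpha\equiv 1$ on $B_\rho(y)$, $0\le\alpha\le 1$, and $|\nabla\alpha|\le 2/(R-\rho)$, and use $v:=(u-k)^+\alpha^2$ as a test function. The point of the definition of $K(y)$ is precisely to guarantee that $T_k(u)\alpha\in H^1_\Dcal(\Omega)$, hence (after a routine adjustment, since $(u-k)^+\alpha^2$ differs from $(T_{k'}u)\alpha$-type expressions only by bounded factors and the constant $k$) that $v$ is an admissible test function in $H^1_\Dcal(\Omega)$, so that the boundary terms on $\Gamma_\Dcal$ vanish and the Neumann condition on $\Gamma_\Ncal$ contributes nothing.

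Carrying this out, testing gives
\[
\int_\Omega \Hcal(\nabla u)\nabla_\xi\Hcal(\nabla u)\cdot\nabla\!\left[(u-k)^+\alpha^2\right]=\int_\Omega f\,(u-k)^+\alpha^2 .
\]
Expanding $\nabla[(u-k)^+\alpha^2]=\alpha^2\nabla(u-k)^+ + 2\alpha(u-k)^+\nabla\alpha$, and using \eqref{(2.5) Montoro} together with the lower bound in \eqref{condizione di crescita H} on the first piece and the upper bounds in \eqref{condizione di crescita H} and \eqref{(2.6) Montoro} on the cross term, I get
\[
\underline\alpha^2\!\int_{A(k)}\!\alpha^2|\nabla u|^2 \le 2\overline\alpha Q\!\int_{A(k)}\!\alpha|\nabla u|(u-k)^+|\nabla\alpha| + \int_{A(k)}\!|f|(u-k)^+\alpha^2 .
\]
A Young inequality on the first term on the right absorbs $\int\alpha^2|\nabla u|^2$ into the left-hand side at the cost of a constant depending on $\underline\alpha,\overline\alpha,Q$ and a term $C(R-\rho)^{-2}\int_{A(k;R)}|u-k|^2$; since $\alpha\equiv1$ on $B_\rho(y)$ this already produces the first summand of \eqref{7.1} with the constant $\Pi$.

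For the forcing term, I would estimate $\int_{A(k;R)}|f|(u-k)^+\le \|f\|_{L^r(A(k;R))}\,\|(u-k)^+\|_{L^{2^*}(A(k;R))}\,|A(k;R)|^{\theta}$ by H\"older with three exponents $r$, $2^*$, and the conjugate exponent, where the power $\theta$ is computed from $1=\frac1r+\frac1{2^*}+\theta$; then apply the Sobolev--Poincar\'e inequality of Remark \ref{rem:sob}, $\|(u-k)^+\|_{L^{2^*}(\Omega(y,R))}\le\beta\|\nabla(u-k)^+\|_{L^2(\Omega(y,R))}$, and once more Young's inequality to absorb the resulting $\|\nabla u\|^2_{L^2(A(k;R))}$ factor into the left-hand side. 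Bookkeeping the exponent of $|A(k;R)|$ yields exactly $\frac{(N+2)r-2N}{Nr}$, and the accumulated constant in front is the $\Theta$ depending on $\underline\alpha,\overline\alpha,Q$ (and on $\beta$, $N$, $r$). \textbf{The main obstacle} I anticipate is the admissibility of the test function and the correct handling near $\Gamma$: one must verify carefully, via the definition \eqref{def K+} of $K(y)$ and the characterization of $H^1_\Dcal(\Omega)$ as the closure of $C^\infty_\Dcal(\Omega)$, that $(u-k)^+\alpha^2\in H^1_\Dcal(\Omega)$ (not merely $H^1$), so that no boundary integral over $\Gamma_\Dcal$ survives; the rest is the standard Stampacchia/De Giorgi computation, which then feeds into Theorems 5.12--5.14 of \cite{Colorado Peral 2004} to conclude $u\in C^{0,\gamma}(\overline\Omega)$ for some $\gamma<1/2$.
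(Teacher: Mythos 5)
Your proposal is correct and follows essentially the same route as the paper: test the weak formulation with a localized truncation $G_k(u)\,\alpha$ (the paper uses a first power of a specific $C^1$ radial cutoff with $[\alpha']^2/\alpha\le 36/(R-\rho)^2$ rather than your $\alpha^2$, and reaches the exponent $\frac{(N+2)r-2N}{Nr}$ by two successive H\"older steps through $L^{2N/(N+2)}$ instead of your single three-exponent H\"older, but these are cosmetic variants), then absorb via Young and the Sobolev inequality of Remark \ref{rem:sob}. The admissibility point you flag is resolved exactly as you suggest: since $k\in K(y)$ gives $T_k(u)\alpha\in H^1_\Dcal(\Omega)$ and $u\alpha\in H^1_\Dcal(\Omega)$, the difference $G_k(u)\alpha=u\alpha-T_k(u)\alpha$ lies in $H^1_\Dcal(\Omega)$.
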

\begin{proof}
Let us take $G_k(u):=u-T_k(u)$, and
define the nonnegative $C^1$ function
$\alpha(r)$ as
\begin{equation}\label{def alpha}
\alpha(r)=
\begin{cases}
1 \ &\text{if } r < \rho,\\
\frac{(R-r)^2(R+2r-3\rho)}{(R-
\rho)^3} & \text{if } \rho\leq  r \leq R,\\
0 &\text{if } r>R.
\end{cases}
\end{equation}  
so that $0\leq \alpha(r)\leq 1$
and 
$\frac{[\alpha ' (r)]^2}{\alpha(r)}\leq 
\frac{36}{(R-\rho)^2}$.
In view of  \ref{def K+}, it follows that 
for every $y \in \overline{\Omega}$ and for every $k\in K(y)$, one has
$v(x):=G_k
(u)\alpha(r) \in H_\Dcal^1(\Omega)$, 
where 
$r=|x-y|$. 
Indeed,  since $k \in K(y)$, we have that
$T_k(u)\alpha(r)\in H_\Dcal^1(\Omega)$.
Taking $v$ as test 
function in the weak formulation of
\eqref{problema con f}, we get
\begin{equation}\label{test}
\int_\Omega \Hcal(\nabla u)\nabla_\xi 
\Hcal(\nabla u)\cdot \nabla v=\int_\Omega f 
v.
\end{equation} 
Since $v=\alpha(r)G_k(u)\equiv 0$ in 
$\Omega\setminus A(k;R)$,
it results
\begin{equation}\label{grad v}
\nabla v(x)=
\left\{
\alpha(r)\nabla u +\alpha'(r)(u-k)\frac{x-y}
{r}\right\} \chi_{A(k;R)},
\end{equation}
where $A(k;R)$ is defined in \eqref{Akr}. Hence,
\eqref{test} is equivalent to
\begin{align}\label{7.7}
\begin{split}
&\int_{A(k;R)}\alpha(r)\Hcal(\nabla u)
\nabla_\xi \Hcal(\nabla u)\cdot \nabla u+
\int_{A(k;R)}\alpha'(r)(u-k)\Hcal(\nabla u)
\nabla_\xi \Hcal(\nabla u)\cdot \frac{(x-y)}
{r}\\
&= \int_{A(k;R)}\alpha(r)(u-k)f.
\end{split}
\end{align}
Now, exploiting \eqref{condizione 
di crescita H} and \eqref{(2.5) Montoro} we 
obtain 
$$\underline{\alpha}^2 \int_{A(k;R)}
\alpha(r)|\nabla u|^2 \leq \int_{A(k;R)}
\alpha(r)\Hcal^2(\nabla u)=\int_{A(k;R)}
\alpha(r) \Hcal(\nabla u)\nabla_\xi 
\Hcal(\nabla u)\cdot \nabla u.$$
On the other hand, multiplying the second term
of the left-hand side by $\alpha^{1/2}(r)$,
applying 
H\"older inequality, and taking into account 
conditions \eqref{(2.5) Montoro}, 
\eqref{(2.6) Montoro}, one has
\begin{align}\label{7.8}
\begin{split}
& \left|\int_{A(k;R)}\alpha'(r)(u-k) 
\Hcal(\nabla u)\nabla_\xi \Hcal(\nabla u)
\cdot \frac{(x-y)}{r} \right|\leq \\
&\leq \left\{ \int_{A(k;R)}\alpha(r)|
\Hcal(\nabla u)\nabla_\xi \Hcal(\nabla u)|^2 
\right\}^{1/2}\left\{ \int_{A(k;R)} 
\frac{(\alpha'(r))^2}{\alpha(r)} (u-k)^2 
\right\}^{1/2}\\
& \leq \overline{\alpha}Q \Ccal(\rho,R)\left\{ 
\int_{A(k;R)}\alpha(r)|\nabla u|^2 \right\}^{1/
2}\left\{ \int_{A(k;R)} (u-k)^2 \right\}^{1/2}
\\
& \leq \frac{1}{2}\underline{\alpha}^2 
\int_{A(k;R)}\alpha(r)|\nabla u|^2 +  
\frac{\overline{\alpha}^2 Q^2\Ccal^2(\rho,R)}
{2\underline{\alpha}^2}\int_{A(k;R)}(u-k)^2,
\end{split}
\end{align}
where the last inequality follows from 
Young inequality, and 
$\mathcal{C}(\rho,R):=\left\{\max_{\rho\leq 
r\leq R}\frac{[\alpha'(r)]^2}{\alpha(r)}
\right\}^{1/2}$,
so that we have $
\Ccal^2(\rho;R)\leq \frac{36}{(R-\rho)^2}$.
Let us deal with the right-hand side in
\eqref{7.7}.
Recalling that $G_k(u)=u-k$ in 
$A(k;R)$, and that $A(k;R)\subset \Omega$,
by applying H\"older and 
Young inequalities we get
\begin{align}\label{holder}
\begin{split}
\left| \int_{A(k;R)}\alpha(r)(u-k)f \right|
&\leq
\|\alpha(r)G_k(u) \|_{L^{2^*}(\Omega(y,\rho))}\| f 
\|_{L^{\frac{2N}{N+2}}(A(k;R))}\\
&\leq 
\frac{\delta}{2}\| \alpha(r)G_k(u)
\|_{L^{2^*}(\Omega(y,\rho))}^2+\frac{1}{2\delta}\| f
\|_{L^\frac{2N}{N+2}(A(k;R))}^2 ,
\end{split}
\end{align}
for every $\delta>0$.
Moreover, in view of Remark \eqref{rem:sob} and takin into account \eqref{grad v},
we obtain
\[\begin{split}
\| \alpha(r)G_k(u) 
\|_{L^{2^*}(\Omega(y,\rho))}^2 
&
\leq \beta^2 \| \nabla (\alpha(r)G_k(u)) 
\|_{L^{2}(\Omega(y,\rho))}^2 
\\
&
\leq 2\beta^2 \left\{ \int_{A(k;R)}
[\alpha '(r)]^2 |G_k(u)|^2 +
\int_{A(k;R)} \alpha^2(r) |\nabla G_k(u)|^2 
\right\}.
\end{split}
\]
Since $[\alpha '(r)]^2 \leq \frac{36}{(
R-\rho)^2}$ and $\alpha^2(r)\leq \alpha(r)$,
we have
\begin{equation}\label{dis:1}
\| \alpha(r)G_k(u) 
\|_{L^{2^*}(\Omega(y,\rho))}^2 
\leq 
2\beta^2 \left\{ 
\frac{36}{(R-\rho)^2}\int_{A(k;R)}
|u-k|^2 +\int_{A(k;R)} \alpha(r)
|\nabla u|^2\right\}.
\end{equation}
Moreover, by applying H\"older inequality again 
it 
results
\begin{equation*}\label{holder f}
\| f 
\|_{L^{\frac{2N}{N+2}}(A(k;R))}^2=\left(\int_{A(k;R)}|f|^{\frac{2N}{N+2}}\right)^{\frac{N+2}N}
\leq
\| f \|_{L^r(A(k;R))}^2 \left| A(k;R) \right|^{
\frac{(N+2)r-2N}{Nr}}.
\end{equation*}
This and \eqref{dis:1} imply that  \eqref{holder} becomes
\begin{align}\label{7.9}
\begin{split}
\left| \int_{A(k;R)}\alpha(r)(u-k)f \right|
\leq &
\beta^2 \delta \int_{A(k;R)}\alpha(r)
|\nabla u|^2 +\beta^2 \delta 
\frac{36}{(R-\rho)^2}\int_{A(k;R)} |u-k|^2\\
&+\frac{1}{2\delta} \| f \|_{L^r(A(k;R))}^2 
\left| A(k;R) \right|^{
\frac{(N+2)r-2N}{Nr}}.
\end{split}
\end{align}
Combining \eqref{7.7}, \eqref{7.8} and 
\eqref{7.9} we obtain
\begin{align}\label{7.10}
\begin{split}
\left(
\frac{
\underline{\alpha}^2}{2} -\beta^2\delta\right)
\int_{A(k;R)}\alpha(r)|
\nabla u|^2
\leq &
\frac{36}{(R-\rho)^2}\left( \frac{
\overline{\alpha}^2 Q}{2\underline{\alpha}^2}+
\beta^2 \delta \right)
\int_{A(k;R)} |u-k|^2 \\
&+ \frac{1}
{2\delta}\| f \|_{L^r(A(k;R))}^2 
\left| A(k;R) \right|^{
\frac{(N+2)r-2N}{Nr}}.
\end{split}
\end{align} 
Finally, taking $\delta$ sufficiently small
so that $$\left(
\frac{
\underline{\alpha}^2}{2} -
\beta^2\delta\right)>0,$$
we obtain \eqref{7.1}.
\end{proof}
\section*{Acknowledgements}
Work partially supported by the INdAM-GNAMPA 
group.

\end{document}